\documentclass[12pt]{amsart}
\usepackage[hyphens]{url} 
\usepackage[colorlinks,allcolors=blue]{hyperref}
\usepackage{amssymb}
\usepackage{amsthm}
\usepackage{amsfonts}
\usepackage{mathtools}
\usepackage{amsmath}
\usepackage[all,cmtip]{xy}
\usepackage{fullpage}
\usepackage{mathrsfs}
\usepackage{graphicx}
\usepackage{tensor}
\usepackage{tikz}
\usepackage{tikz-cd}
\usepackage{quiver}
\usetikzlibrary{matrix}
\usetikzlibrary{arrows}
\usepackage{float}

\usepackage[style=alphabetic, doi=false, url=false]{biblatex}
\usepackage{kimballstyle}

\newcommand{\catsus}{{\Sigma}}

\newcommand{\cof}{\mathrm{cof}}

\renewcommand{\tilde}{\widetilde}
\tikzset{
    labl/.style={anchor=south, rotate=90, inner sep=.5mm}
}

\usepackage[textsize = tiny, disable]{todonotes}

\begin{document}

\title{Change of Enrichment for Monoidal Model Categories}
\author{Kimball Strong}
\address{Department of Mathematics, Cornell University, Ithaca, NY 14853}
\email{ks2424@cornell.edu}
\date{}
\begin{abstract}
	A classical theorem of category theory says that given an adjunction $L: \V \rightleftarrows \W: R$ between monoidal categories with lax monoidal right adjoint, there is an induced adjunction $\lcat: \vcat \rightleftarrows \wcat : \rcat$ between categories of enriched categories.
	We extend this result to monoidal model categories (with some model categorical assumptions), improving on previous theorems which required the derived functor of $L$ to be strong monoidal.
	As an application, we consider the category of chain complexes over a varying groupoid of operators, equipped with two different monoidal structures: the standard tensor product, and the cartesian product. 
	We show that both of these are monoidal model categories, and use our theorems to reproduce the square-zero extensions adjunction for augmented $\dg$-categories.
\end{abstract}
\maketitle
\tableofcontents

\section*{Introduction}
Suppose $\V$ and $\W$ are monoidal categories, and that 
$$L: \V \rightleftarrows \W: R $$
is an adjunction of underlying categories.
If $L$ is ``strong monoidal,'' that is, if there is a natural isomorphism $L(X \otimes Y) \cong L(X) \otimes L(Y)$, then this adjunction defines an adjunction in a $2$-category of monoidal categories, and it then follows that it induces a change-of-enrichment adjunction
$$\lcat: \V\cat \rightleftarrows \W\cat: \rcat$$
which is given by applying $L$ and $R$ locally (i.e. by applying $L$ and $R$ to the hom-objects).
This was generalized to homotopical settings by Muro in \cite{Muro2012} in which the corresponding statement for Quillen adjunctions is proved: that is, if $\V$ and $\W$ are monoidal model categories, and
$$L: \V \rightleftarrows \W: R $$
an adjunction such that $L$ is strong monoidal ``up-to-weak-equivalence'' (i.e. there is a natural weak equivalence $L(X \otimes Y) \simeq L(X) \otimes L(Y)$), then (under some mild model categorical assumptions) Muro shows there is a change-of-enrichment Quillen adjunction
$$\lcat: \V\cat \rightleftarrows \W\cat: \rcat$$
which (up to weak equivalence) is given by applying $L$ and $R$ locally.
\par
The setting of a strong monoidal left adjoint is the most common setting in which to consider a change-of-enrichment adjunction, but it is a much stronger hypotheses than is actually needed. 
More generally, if $L: \V \rightleftarrows \W: R $ is an adjunction between monoidal categories and the right adjoint $R$ is lax monoidal, meaning that there is a natural transformation (not necessarily, and indeed not generally an isomorphism) 
$$R(X) \otimes R(Y) \to R(X \otimes Y)$$
then there exists (under mild categorical assumptions) an induced change-of-enrichment adjunction
$$\lcat: \vcat \rightleftarrows \wcat: \rcat$$
Where the right adjoint $\rcat$ is given by applying $R$ locally (i.e. to each hom-object), but the left adjoint $L^\mathrm{cat}$ is not.
\par 
The main purpose of the present paper is to transfer this result into the homotopical setting. Before stating our main theorem, we need some amount of terminology: recall that for $\V$ a monoidal category, the category $\vcat$ is equipped with a canonical functor $\pi_0 : \vcat \to \cat$, which sends $\C \in \vcat$ to the category $\pi_0\C$ which has the same objects as $\C$, and for which
$$\pi_0\C[x,y] = \Hom_\V(\iv, \C[x,y])$$
where $\iv$ is the unit of $\V$. 
We can extend this to the model categorical setting; if $\V$ is a model category, then we can define $\pi_0: \vcat \to \cat$ by setting 
$$\pi_0\C[x,y] = \Hom_{\Ho(\V)}(\iv, \C[x,y])$$
\begin{definition}
	Let $\V$ be a model category equipped with a symmetric monoidal structure. We call a functor $f: \C \to \D$ of $\V$-categories a \emph{Dwyer-Kan equivalence} if it satisfies:
	\begin{itemize}
		\item \textbf{(homotopical full faithfulness):} for any $x,y \in \C$, the map $\C[x,y] \to \D[fx,fy]$ is a weak equivalence in $\V$.
		\item \textbf{(homotopical essential surjectivity):} the induced functor $\pi_0f:\pi_0\C \to \pi_0\D$ is essentially surjective. 
	\end{itemize} 
	Note that these together imply that $\pi_0f$ is in fact an equivalence of categories, as homotopical full faithfulness implies that the map 
	$$\Hom_{\Ho(\V)}(\iv, \C[x,y]) \to \Hom_{\Ho(\V)}(\iv, \D[fx,fy])$$ 
	is an isomorphism. 
\end{definition}
\begin{definition}
	Let $\V$ be a model category equipped with a closed symmetric monoidal product. The \emph{Dwyer-Kan model structure} on $\vcat$ is (if it exists) the model structure in which:
	\begin{itemize}
		\item The weak equivalences are the Dwyer-Kan equivalences, and
		\item The trivial fibrations are the local trivial fibrations which are surjective on objects.
	\end{itemize}
\end{definition}
Note that this model structure need not always exist, although both \cite{Muro2012} and \cite{BergerMoerdijk2012} give fairly general conditions on $\V$ under which it does.
Our first main theorem is:
\begin{alphatheorem}[Theorem \ref{thm:main-theorem-chapter-two}]\label{alphathm:complicated}
	Let $\V$ and $\W$ be symmetric monoidal model categories satisfying the model-categorical assumptions (\cite{Muro2012}) to construct the Dwyer-Kan model structures on $\vcat$ and $\wcat$.
	Let $L: \V \rightleftarrows \W: R$ be a Quillen adjunction with $R$ lax monoidal.
	Furthermore suppose that the colax structure map $L(\iv) \to \iw$ is a weak equivalence.
	Under some model categorical assumptions on $\V$, the induced adjunction
	\adjunctiondiagram{\catfunctor{L}}{\vcat}{\wcat}{\catfunctor{R}}
	is a Quillen adjunction.
\end{alphatheorem}
The condition that $L$ weakly preserves the unit is more restrictive than the classical (non model-categorical) theorem, but as we note in Section \ref{section:necessity-lax-unitality}, the classical theorem is somewhat misleading: even when $\V$ and $\W$ are ordinary categories, $\vcat$ and $\wcat$ are naturally relative categories (with the Dwyer-Kan equivalences). 
We give an example to show that without imposing some condition on the the colax structure map $L(\iv) \to \iw$, the induced adjunction $\lcat: \vcat \rightleftarrows \wcat : \rcat$ may be badly behaved, e.g. may not preserve Dwyer-Kan equivalences. 
\par 
Our second main theorem is both a separate set of hypotheses with which to prove that $\lcat : \vcat \rightleftarrows \wcat : \rcat$ is a Quillen adjunction, as well as a tool for analyzing the Dwyer-Kan model structure.
One difficulty analyzing this model structure is that the fibrations can \textit{a priori} be quite complex. 
\begin{definition}\label{dfn:naive-fibration}
	Let $\V$ be a model category equipped with a monoidal structure. 
	We shall refer to the maps in $\vcat$ which are local fibrations and induce isofibrations on homotopy categories as the \textit{naive fibrations.}
\end{definition}
Under the hypotheses of \cite{BergerMoerdijk2012}, fibrations in $\vcat$ must be in particular naive fibrations, but there may be naive fibrations which are not fibrations. 
Fortunately, in practice these two classes often coincide.
\begin{definition}\label{dfn:naive-fibration-property}
	We shall say that a model structure on $\vcat$ has the \emph{naive fibration property} if the fibrations between fibrant objects are exactly the local fibrations which induce isofibrations of homotopy categories.
\end{definition}
In practice, many examples of interest have the naive fibration property. 
For instance, simplicial categories \cite{BergnerModelStructure} and $\dg$-categories \cite{Tabuada2004} both have the naive fibration property. 
Lurie proves that for a general class of $\V$ (satisfying hypotheses somewhat dual to those of Berger and Moerdijk) $\vcat$ has the naive fibration property \cite{LurieHTT}.
Our second main theorem can be roughly stated as follows:
\begin{alphatheorem}[Theorem \ref{thm:naive-fibration-transfer}]\label{alphathm:simple}
	Let $\V$ and $\W$ be symmetric monoidal model categories satisfying the model-categorical assumptions (\cite{BergerMoerdijk2012}) to construct the Dwyer-Kan model structures on $\vcat$ and $\wcat$.
	 Let $L: \V \rightleftarrows \W: R$ be a Quillen adjunction with $R$ lax monoidal. Further suppose that $\vcat$ has the weak naive fibration property. 
	 Then if $R$ preserves naive fibrations, the adjunction
	 \adjunctiondiagram{\catfunctor{L}}{\vcat}{\wcat}{\catfunctor{R}}
	 is Quillen, and $\wcat$ has the weak naive fibration property. 
\end{alphatheorem}
This theorem is not particularly difficult to prove, as the assumption that $\vcat$ has the weak naive fibration property greatly simplifies checking that $\rcat$ preserves fibrations, which in geneneral is the difficult part of proving that the change of enrichment adjunction is Quillen. 
Fortunately, the weak naive fibration property is satisfied for many examples of interest, and the theorem gives a tool to ensure more examples satisfy this.
Indeed, we will show that all of the examples we consider here satisfy the weak naive fibration property.  
\par 
An example which we explore in detail here is when the underlying category of both $\V$ and $\W$ is the category of 
(potentially augmented) chain complexes with a groupoid of operators. 
These arise naturally in homotopy theory by considering the action of the fundamental groupoid of a space on the singular chain complex of its universal cover.
We show that the category $\chgpd$ of such objects (which is easily shown to be a model category) admits two monoidal products with respect to which it is a monoidal model category. 
The first is a tensor product that combines the product of groupoids with the tensor product of chain complexes; the fact that this is a monoidal model category is straightforward.
More interesting is the cartesian product: the category $\chbasic$ of ordinary chain complexes is not cartesian closed, and thus lacks a basic prerequisite to being called a ``monoidal model category.'' 
However, it follows from recent work of Nunes and V{\'a}k{\'a}r in \cite{Nunes2024} that $\chgpd$ is cartesian closed, and we furthermore show that it is a cartesian monoidal model category.
Passing to the category $\chaug$ of augmented objects makes the identity functor lax monoidal, giving an adjunction
\adjunctiondiagram{}{\trackdgcataug}{\trackchaincat}{}
between model categories of enriched categories. 
We use our second main theorem (Theorem \ref{thm:naive-fibration-transfer}) to prove that this is a Quillen adjunction, although notably our methods are still somewhat ad-hoc.
\par 
We refer to objects of $\trackdgcat$ as ``track-dg categories,'' as they combine track categories (categories enriched in groupoids) with dg-categories (categories enriched in $\chbasic$ with respect to the tensor product). 
One can think of them as being a $(2,1)$-category enhanced with the structure of a dg-category paramaterized by the $(2,1)$-categorical structure. 
Objects in $\trackchaincat$, which we refer to as \textit{track-chain categories} are less familiar, but much simpler than track-dg categories. 
They are best thought of as being a system of chain complexes paramaterized by the morphisms of a $(2,1)$-category. 
The most illuminating examples are single-object categories, i.e. monoids: considering only single-object categories with trivial groupoid part,
the left-hand side consists simply of augmented DGA's, whereas the right-hand side consists simply of chain complexes. The adjunction is the adjunction of indecomposables and square-zero extensions.
\par 
More generally, one may embed ordinary augmented $\dg$-categories into the left hand side of the adjunction by viewing a chain complex as having an action by the trivial groupoid. 
Then the output track chain-category will be equivalently a chain complex, and we prove that this construction recovers the non-commutative $1$-forms $\Omega^1_{nc}$ of \cite{Tabuada2009}, used to define the Andr{\'e}-Quillen homology of a $\dg$-category.
\par We would like to note that both theorems assume that the monoidal product is symmetric:
we believe that the assumption that $\V$ and $\W$ are symmetric monoidal is likely not necessary, but we have included it because the existing literature (in particular \cite{Muro2012} and \cite{BergerMoerdijk2012}) assumes this in constructing a model structure on $\vcat$. 
Historically, the monoidal model categories of interest in homotopy theory have been symmetric. 
However, recently there has been growing interest in the Gray tensor product, a nonsymmetric monoidal structure on $(\infty,n)$-categories.
Indeed, this paper is largely a companion paper to \cite{Strong2025}, in which the results are used to study the strictification adjunction between $(\infty,1)$-categories and $(\omega,1)$-categories, a more ``homological'' version of $(\infty,1)$-categories.
In that case, the gray tensor product (of $(\infty,0)$-categories, i.e. $\infty$-groupoids) is in fact symmetric, hence our results apply already.
In future work, we plan to extend the results here to the more general nonsymmetric case, but this will require first verifying that the Dwyer-Kan model structure can be constructed without the symmetry assumption on $\V$.

\subsection{Notation and Conventions}
	\noindent We assume the reader is familiar with the theory of model categories, including in particular basic homotopy colimits (we shall make use only of homotopy pushouts and directed homotopy colimits).
	\par 
	By ``monoidal model category'' $\V$ we will mean a monoidal category $(\V, \otimes_V, \iv)$ where the underlying category is equipped with a model structure, satisfying the following two compatibility axioms:
\begin{definition}
  A model category $\V$ equipped with monoidal product $\otimes$ satisfies the \emph{pushout-product axiom} if for any two cofibrations $i : A \to B$ and $j : C \to D$, the morphism
    \[
      i \boxtimes j: B \otimes C \amalg_{A \otimes C} A \otimes D \to
      B \otimes D,
    \]
    induced by the commutative square
    \[
      \xymatrix{ A \otimes C \ar[d]_{i \otimes C} \ar[r]^{A \otimes j}
        &
        A \otimes D \ar[d]^{i \otimes D} \\
        B \otimes C \ar[r]_{B \otimes j} & B \otimes D }
    \]
    is a cofibration. Furthermore, we require that if either $i$ or $j$ is a trivial
    cofibration, then so is $i \boxtimes j$.
\end{definition}
\begin{definition}
	A model category $\V$ equipped with monoidal product $\otimes$ and unit $\iv$ satisfies the \emph{unit axiom}
  	if, for every cofibrant replacement $p : Q\iv \to \iv$ of the tensor unit
    and every cofibrant object $A$, both $p \otimes A : Q\iv \otimes A \to \iv
    \otimes A$ and $A \otimes p : A \otimes Q\iv \to A \otimes \iv$ are weak
    equivalences.
\end{definition}
Furthermore, we are often interested in the \textit{monoid axiom}:
\begin{definition}[\cite{SchwedeShipley1998}]\label{dfn:monoid-axiom}
	Let $\V$ be a model category equipped with a monoidal structure $\otimes$. Let $R$ be the set of maps of the form $X \otimes f$ for $X$ any object of $\V$ and $f$ an acyclic cofibration.  
	We say that $\V$ satisfies the \emph{monoid axiom} if every morphism obtained as a transfinite composition of pushouts of maps in $R$ is a weak equivalence. 
\end{definition}
When there is no possible ambiguity, we will simply write $\otimes$ to be the monoidal product in $\V$. 
When an expression or statement involves multiple monoidal categories, we will use the notation $\ov$ and $\ow$ to distinguish the products.
\par
Similarly, we shall write $\vcat$ to denote the category of (small) categories enriched in $\V$; if we wish to make the monoidal structure more obvious we will write e.g. $\V^\otimes\text{-}\cat$.
One well-known example which we shall make use of is the following:
\begin{definition}
	A \emph{track-category} is a category enriched in the cartesian monoidal category $\gpd$ of groupoids. 
\end{definition}
We denote by $\T$ the free functor from $\V$ to $\mathrm{Mon}(\V)$ (the category of monoids in $\V$), left adjoint to the forgetful functor. This adjunction exists whenever $\V$ is closed monoidal.
\par 
We will not take care to distinguish between $\V$-categories with one object and monoids in $\V$; e.g. treating $\T$ as giving a functor $\V \to \vcat$. 
Similarly, given a set $S$, we will view it as being the discrete $\V$-category with object set $S$, in which
$$
	S[x,y] = 
\begin{cases}
	\emptyset & \text{ if } x \neq y \\
	\iv & \text{ if } x = y
\end{cases}
$$
where $\emptyset$ is the intial object of $\V$. 
We will use this mostly in the case where $S$ is a singleton set. 
\subsection{Organization of the Paper}
	We will begin in Section \ref{section:background} by reviewing some background on change of enrichment for ordinary (i.e., non-model) categories, as well as the model category case when the object set is fixed (which is significantly easier to analyze).
	In Section \ref{section:thmA} we prove the second of our change of enrichment theorems for monoidal model categories (Theorem \ref{alphathm:simple}). 
	In Sections \ref{section:chgpd} and \ref{section:chgpd-categories}, 
	we apply this theorem to the category of chain complexes with action by a (varying) groupoid,
	and in Section \ref{section:bar-construction} show that the resulting adjunction recovers
	the noncommutative $1$-forms of an augmented $\dg$-category. 
	In Section \ref{section:thmB}---by far the most technical section---we prove Theorem \ref{alphathm:complicated}.
	In Section \ref{section:necessity-lax-unitality} we give a ``counterexample'' to demonstrate the necessity of one of our seemingly extraneous hypotheses, showing that even when the model structures involved are trivial the induced adjunction can fail to be Quillen.

\subsection*{Acknowledgments}
	This paper is adapted from the second chapter of my PhD thesis, and so I would like to thank my advisor Inna Zakharevich for her guidance and for many discussions about the content.
	I would like to thank Varinderjit Mann for helping me understand some of the subtler bits of enriched category theory, and pointing me to many helpful references.
\section{Background: Change of Base for Monoidal Categories and Monoidal Model Categories with Fixed Object Set}
\label{section:background}
In this section we recall the classical (non-homotopical) facts about change of enrichment for monoidal categories, and define some notations we will need later. 
Most of this is borrowed from \cite[Sections 2,3]{Muro2012}, and the reader who wants slightly more detail should look there for it.
\par 
Let $\V$ and $\W$ be locally presentable biclosed monoidal categories, and let $L: \V \rightleftarrows \W : R$ be an adjunction of categories with $R$ lax monoidal. 
For $S$ a set, denote by $\vcat_S$ the category whose objects are $\V$-categories with fixed object set $S$, and whose morphisms are identity-on-objects $\V$-functors. Similarly for $\wcat_S$.
It follows via \cite[Theorem 4.5.6]{Borceux1994} that for any set $S$, there is an adjunction 
\adjunctiondiagram{\lcat_S}{\vcat_S}{\wcat_S}{\rcat_S}
In which $\rcat_S$ is given by applying $R$ to each hom object, using the lax monoidal structure of $R$ to produce a $\V$-category. 
Moreover, it follows by \cite[Corollary 1.11.3]{Giraud1971} that these adjunctions stitch together to give an adjunction
\adjunctiondiagram{\lcat}{\vcat}{\wcat}{\rcat}
While $\rcat$ is given by applying $R$ locally, $\lcat$ can be quite nontrivial to compute.
Essentially everything that we deduce about it will be by utilizing the fact that it is left adjoint to $\rcat$ and preserves the object set.
\par 
Bringing a model category structure into the picture is not too difficult provided one is willing to restrict to a fixed object set:
fixing a set of objects $S$, one can deduce that $\vcat_S$ is the category of monoids in $\vgraph_S$ for a monoidal product called the ``circle product.''
$\vgraph_S$ is trivially a model category, so the task of constructing a model structure on $\vcat_S$ is reduced to the problem of constructing a model structure on a category of monoids. 
Thus we obtain:
\begin{theorem}
	Let $\V$ be a combinatorial model category with cofibrant unit, satisfying the monoid axiom.
	Then $\vcat_S$ has a model structure in which a functor $f: \C \to \D$ is:
	\begin{itemize}
		\item A weak equivalence if it is homotopically fully faithful (equivalently, if it is a Dwyer-Kan equivalence),
		\item A fibration if it is a local fibration.
	\end{itemize}
\end{theorem}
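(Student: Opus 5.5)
We obtain the model structure on $\vcat_S$ by transfer along the free--forgetful adjunction $F_S : \vgraph_S \rightleftarrows \vcat_S : U_S$. Here $\vgraph_S = \prod_{S\times S}\V$ carries the product model structure, and $\vcat_S$ is identified with the category of monoids in $(\vgraph_S, \circ)$ for the circle product
$$(X\circ Y)[x,z] = \coprod_{y\in S} X[y,z]\otimes Y[x,y],$$
with unit the discrete $\V$-graph $S$. This monoidal structure is not symmetric, but it is closed on both sides and cocontinuous in each variable, since $\otimes$ is biclosed. The intended route is the Schwede--Shipley theorem for monoids: if a combinatorial monoidal model category satisfies the pushout-product axiom and the monoid axiom, then monoids carry the transferred model structure. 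There, fibrations and weak equivalences are created by the forgetful functor. In our case these are exactly the local fibrations and the local weak equivalences.

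The first step is to check the hypotheses for $(\vgraph_S,\circ)$. It is combinatorial as a product of copies of $\V$. For the pushout-product axiom, take generating cofibrations of $\vgraph_S$ of the form $i_{(a,b)}$, meaning a generating cofibration $i$ of $\V$ placed in the single entry $(a,b)$. The pushout-product $i_{(a,b)}\,\square\, j_{(c,d)}$ under $\circ$ is either an initial-to-initial map (when $b\neq c$) or $(i\,\square\, j)$ placed in entry $(a,d)$. By the axiom in $\V$ this is a cofibration, and acyclic if $i$ or $j$ is. Because $\circ$ preserves colimits in each variable, checking on generators suffices.

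The monoid axiom is handled the same way. A map $X\circ f$ with $f$ an acyclic cofibration has components that are coproducts of maps $X[y,z]\otimes f[x,y]$. These are coproducts of maps in the class $R$ of Definition \ref{dfn:monoid-axiom}, and coproducts of maps in $R$ lie again in $R$'s saturation. Pushouts and transfinite compositions in $\vgraph_S$ are computed entrywise, so cellular maps built from such maps are entrywise transfinite composites of pushouts of coproducts of maps in $R$. By the monoid axiom in $\V$ these are weak equivalences, and maps of the form $f\circ X$ are treated symmetrically. Cofibrancy of the unit is needed for the unit part of the Schwede--Shipley setup: the unit $S$ has entries $\iv$ or $\emptyset$, so it is cofibrant, and the unit axiom then holds trivially.

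The main obstacle is that $\circ$ is not symmetric, while Schwede--Shipley is often stated for symmetric monoidal categories. The remedy is to use the non-symmetric version, Muro's formulation or Schwede--Shipley's Theorem 4.1, which only needs the monoid axiom for both left and right tensoring. This is exactly why both $X\circ f$ and $f\circ X$ must be checked above. Finally, a map of $\V$-categories with fixed object set $S$ is bijective on objects. Homotopical essential surjectivity is therefore automatic, so a local weak equivalence is the same as a Dwyer--Kan equivalence, which gives the parenthetical claim in the statement.
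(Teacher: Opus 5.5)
Your route is the paper's own: identify $\vcat_S$ with monoids in $(\vgraph_S,\circ)$ and transfer along $F_S\dashv U_S$. The paper only asserts this reduction and defers to Muro. Combinatoriality, the pushout-product check on generators, and the remark that local weak equivalences with a fixed object set are Dwyer--Kan equivalences are all fine.

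\textbf{The gap is in your last paragraph.} Transfer requires that the underlying maps of relative $F_S(J)$-cell complexes be weak equivalences. Take a monoid $M$ and an acyclic cofibration $f\colon K\to L$ in $\vgraph_S$. The pushout $M\amalg_{F_SK}F_SL$ is a sequential colimit whose $n$-th stage is attached along the punctured-cube map
\[ M\circ f\circ M\circ f\circ\cdots\circ f\circ M \qquad (n\text{ copies of } f). \]
This is the map to $M\circ L\circ M\circ\cdots\circ L\circ M$ from the colimit of the cube $M\circ A_1\circ M\circ\cdots\circ A_n\circ M$, $A_i\in\{K,L\}$, with the terminal vertex removed. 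For $n=1$ this is already the two-sided map $M\circ f\circ M$, and for $n\ge 2$ it is an interleaved pushout-product.

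These maps are not of the form $X\circ g$ or $g\circ X$ with $g$ an acyclic cofibration:
\begin{itemize}
\item since $\circ$ is not symmetric, you cannot move the inner copies of $M$ aside;
\item since $M$ need not be cofibrant, $M\circ f$ is not an acyclic cofibration to which the pushout-product axiom applies.
\end{itemize}
So checking only $X\circ f$ and $f\circ X$ does not control the attaching maps. Schwede--Shipley work with symmetric monoidal model categories, and symmetry is exactly what lets one rewrite these maps in the form $X\otimes g$. A left/right monoid axiom on $\vgraph_S$ is therefore not the hypothesis that makes the transfer work.

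\textbf{The repair is entrywise and uses the symmetry of $\V$,} which the paper, Muro, and Berger--Moerdijk all assume; this is where that assumption enters.
\begin{enumerate}
\item Colimits in $\vgraph_S$ are computed entrywise and commute with the coproducts defining $\circ$. So the $(x,z)$ entry of the attaching map is a coproduct, over strings of objects of $S$, of punctured-cube maps in $\V$ of the form $M[\cdot]\otimes f[\cdot]\otimes M[\cdot]\otimes\cdots\otimes f[\cdot]\otimes M[\cdot]$.
\item By symmetry, each of these is isomorphic to $\bigl(M[\cdot]\otimes\cdots\otimes M[\cdot]\bigr)\otimes\bigl(f[\cdot]\,\square\,\cdots\,\square\, f[\cdot]\bigr)$.
\item The iterated pushout-product is an acyclic cofibration of $\V$ by the pushout-product axiom. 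Hence each entry is a coproduct of maps in $R$.
\item Relative $F_S(J)$-cell complexes are therefore entrywise transfinite composites of pushouts of maps in $R$. The monoid axiom of $\V$ makes them weak equivalences.
\end{enumerate}
With this in place of your left/right check, the argument goes through.
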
 
From the description of the weak equivalences and fibrations, we immediately obtain:
\begin{theorem}\label{thm:enriched-adjunction-fixed-object-set}
	Let
	\adjunctiondiagram{L}{\V}{\W}{R}
	be a Quillen adjunction between monoidal model categories, such that the Dwyer-Kan model structures on $\vcat$ and $\wcat$ exist, and the right adjoint $R$ is lax monoidal. Then the induced adjunction
	\adjunctiondiagram{\lcat_S}{\vcat_S}{\wcat_S}{\rcat_S}
	is a Quillen adjunction.
\end{theorem}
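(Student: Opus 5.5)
The plan is to show that the right adjoint $\rcat_S$ preserves fibrations and trivial fibrations in the fixed-object-set model structures. This suffices for a Quillen adjunction. Recall that the model structure on $\vcat_S$ (and likewise on $\wcat_S$) described in the preceding theorem is transferred from $\vgraph_S$. Its fibrations are the local fibrations, and its weak equivalences are the local weak equivalences (the homotopically fully faithful functors). Consequently its trivial fibrations are exactly the local trivial fibrations, i.e. identity-on-objects functors $f\colon \C \to \D$ such that each $\C[x,y] \to \D[x,y]$ is a trivial fibration in $\V$.

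First I will recall that $\rcat_S$ is computed hom-wise: for $\C \in \wcat_S$ and $x,y \in S$ we have $(\rcat_S\C)[x,y] = R(\C[x,y])$. Composition is given by the lax structure map $R(\C[y,z]) \otimes R(\C[x,y]) \to R(\C[y,z] \otimes \C[x,y])$ followed by $R$ of composition, and units come from $\iv \to R(\iw) \to R(\C[x,x])$. For an identity-on-objects $\W$-functor $f\colon \C\to\D$, the $\V$-functor $\rcat_S f$ therefore has components $R(f_{x,y})\colon R(\C[x,y]) \to R(\D[x,y])$.

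Next, suppose $f$ is a fibration in $\wcat_S$, so each $f_{x,y}$ is a fibration in $\W$. Since $R$ is a right Quillen functor, each $R(f_{x,y})$ is a fibration in $\V$, so $\rcat_S f$ is a local fibration, i.e. a fibration in $\vcat_S$. If $f$ is moreover a trivial fibration, then each $f_{x,y}$ is a trivial fibration, and $R$ preserves these as well. Hence $\rcat_S f$ is a local trivial fibration, which is a trivial fibration in $\vcat_S$ by the description above. This shows $\rcat_S$ is right Quillen, and so the adjunction $\lcat_S \dashv \rcat_S$ is Quillen.

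There is no genuine obstacle here. The only point needing care is the identification of the fibrations and trivial fibrations in $\vcat_S$ as exactly the local ones. This holds because the model structure is right-induced along the forgetful functor to $\vgraph_S$, whose model structure is objectwise. Notably, no homotopical control of $\lcat_S$ is needed, which is why this fixed-object-set case is easy compared to the general change-of-enrichment theorems.
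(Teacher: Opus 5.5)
Your proposal is correct and is the same argument as the paper's one-line proof: $\rcat_S$ applies $R$ hom-wise, so it preserves the local fibrations and local trivial fibrations, and these are exactly the fibrations and trivial fibrations of the fixed-object-set model structures. Your explicit identification of the trivial fibrations in $\vcat_S$ as the local trivial fibrations, via right-induction from $\vgraph_S$, simply spells out what the paper leaves implicit.
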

\begin{proof}
	$\rcat_S$ preserves the fibrations and acyclic fibrations, since it is defined by applying $R$ locally.
\end{proof}
Thus, there is not too much difficulty provided that we restrict to fixed object sets. The main difficulty arises when trying to glue these various adjunctions together, and from a technical point of view the main challenge is to verify that $\rcat: \wcat \to \vcat$ preserves fibrations; or equivalently that $\lcat: \vcat \to \wcat$ preserves acyclic cofibrations.

\section{Change of enrichment Theorem \ref{alphathm:simple}}\label{section:thmA}
We are now ready to go about stating and proving the simpler of our two main theorems.
\noindent We recall the notion of an interval from \cite{BergerMoerdijk2012}:
\begin{definition}[{\cite[Definition 1.11]{BergerMoerdijk2012}}]\label{dfn:interval-berger-moerdijk}
	Let $\V$ be a monoidal model category with unit $\iv$. 
	Denote by $\I_\V$ the $\V$-category with object set $\{0, 1\}$, and $\I_\V[i,j] = \iv$ for $i,j \in \{0,1\}$, with composition maps given by the unital maps for $\iv$. 
	A \emph{$\V$-interval} is an element of $\vcat_{\{0,1\}}$, weakly equivalent to $\I_\V$. 
\end{definition}
The main point of $\V$-intervals is that Berger and Moerdijk use them to construct the Dwyer-Kan model structure\footnote{
	We should note that Berger and Moerdijk do not approach the model structure on $\vcat$ as being the Dwyer-Kan model structure; instead, they construct the \textit{canonical} model structure, which is specified as being the unique (if it exists) model structure with the acyclic fibrations as we have, and where every object is fibrant. 
	However, they then prove that under their hypotheses, the canonical model structure is also Dwyer-Kan, so for our purposes this distinction does not matter.
}, characterizing the fibrations as the local fibrations which have the RLP against the inclusions $\{0\} \to \H$, for $\H$ a $\V$-interval (recall that $\{0\}$ is the $\V$-category with a single object $0$, and unique hom-object given by $\iv$).
\begin{lemma}\label{lem:single-interval-fibrant-objects}
	Let $\V$ be a monoidal model category satisfying the hypotheses of \cite[Theorem 1.10]{BergerMoerdijk2012}.
	Let $\C$ and $\D$ be fibrant objects of $\vcat$. A map $\alpha: \C \to \D$ of $\vcat$ is a fibration iff: 
	\begin{itemize}
		\item $\alpha$ is a local fibration, and
		\item there exists some $\V$-interval $\mathbb{G}$ such that $\alpha$ has the RLP against the inclusion $\{0\} \hookrightarrow \mathbb{G}$. 
	\end{itemize}
\end{lemma}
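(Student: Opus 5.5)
The forward direction is immediate. By the Berger--Moerdijk characterization recalled above, a fibration in $\vcat$ is a local fibration with the right lifting property against $\{0\} \hookrightarrow \H$ for \emph{every} $\V$-interval $\H$. Any single choice of $\mathbb{G}$, for instance a cofibrant replacement of $\I_\V$ in $\vcat_{\{0,1\}}$, therefore witnesses the second condition. The content lies in the converse: lifting against one interval $\mathbb{G}$ should imply lifting against all intervals $\H$, provided $\C$ and $\D$ are fibrant.

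For the converse I would reduce to the naive fibration condition: that $\pi_0\alpha$ is an isofibration of homotopy categories. The plan has four steps.

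\textbf{Step 1 (fibrant objects see every interval).} Fibrancy of $\C$ means $\C \to *$ lifts against every $\{0\}\hookrightarrow\H$. Since $\C$ is locally fibrant, maps $\H \to \C$ are therefore determined up to homotopy by a homotopy equivalence, that is, an isomorphism in $\pi_0\C$. Concretely, I would use Berger--Moerdijk's result that in the canonical model structure every $\V$-interval has a cofibrant interval mapping to it. Then for a cofibrant $\mathbb{G}$ and any isomorphism $f$ in $\pi_0\C$ there is a map $\mathbb{G}\to\C$ realizing $f$ on $\pi_0$. This uses the local fibrancy of $\C$ and the fact that a cofibrant $\mathbb{G}$ maps by a weak equivalence to $\I_\V$, so that $\mathbb{G}$ is a weak model of the free-living isomorphism.

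\textbf{Step 2 (one interval gives the isofibration property).} Let $x\in\C$ and let $g:\alpha x\to y$ be an isomorphism in $\pi_0\D$. By Step 1 (for $\D$) I realize $g$ by a map $\mathbb{G}\to\D$, after first replacing $\mathbb{G}$ by a suitable interval if necessary. Lifting this against $\{0\}\hookrightarrow\mathbb{G}$ gives a map $\mathbb{G}\to\C$ whose image under $\pi_0$ is an isomorphism $x\to x'$ with $\alpha x'=y$ lying over $g$. So $\alpha$ is a naive fibration.

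\textbf{Step 3 (naive fibrations between fibrant objects are fibrations).} I would show this by factoring $\alpha = p\circ j$, where $j$ is an acyclic cofibration and $p$ a fibration, and then using the retract argument. Here $j:\C\to\mathcal{E}$ is a Dwyer--Kan equivalence between fibrant objects. Because $\alpha$ is a local fibration and an isofibration on $\pi_0$, I can construct a retraction $\mathcal{E}\to\C$ over $\D$. On objects, each object of $\mathcal{E}$ is isomorphic in $\pi_0$ to some $jx$, and the isofibration property of $\alpha$ lets me choose $x$ compatibly over $\D$. On homs, local lifting is available since $j$ is locally a weak equivalence and $\alpha$ a local fibration. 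This exhibits $\alpha$ as a retract of $p$.

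\textbf{Main obstacle.} I expect Step 3 to be the main obstacle, specifically making the hom-level retraction coherent with composition. If that fails, I would instead use Berger--Moerdijk's characterization directly. Their proof that fibrant objects are local fibrant objects with interval lifting reduces lifting against an arbitrary $\H$ to lifting against a cofibrant interval, via the fact that any two cofibrant intervals are connected by weak equivalences in $\vcat_{\{0,1\}}$. Combined with Theorem~\ref{thm:enriched-adjunction-fixed-object-set}-style fixed-object-set homotopy invariance of lifting for local fibrations between fibrant objects, this transfers the lifting property from $\mathbb{G}$ to every $\H$.
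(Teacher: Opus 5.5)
Your forward direction is fine, but the converse as planned has a genuine gap at Step 3. Combined with Lemma \ref{lem:fibrations-are-naive}, the claim that a naive fibration between fibrant objects is a fibration is exactly the naive fibration property (Definition \ref{dfn:naive-fibration-property}). The paper stresses that under Berger--Moerdijk's hypotheses this may fail, and it takes this property as hypothesis (1) of Theorem \ref{thm:naive-fibration-transfer}. The paper in fact establishes it in examples \emph{using} the present lemma. So you would be proving something strictly stronger than what is asked, and something not available in this generality.

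The retract argument also does not get off the ground. A $\V$-functor $r:\mathcal{E}\to\C$ with $rj=\id_\C$ and $\alpha r=p$ is by definition a diagonal filler for $j$ against $\alpha$, so constructing it is the original lifting problem. For objects of $\mathcal{E}$ outside the image of $j$ there is no hom-map from $\C$ to lift along at all. Hom-wise choices transported along $\pi_0$-isomorphisms, which are only homotopy invertible, have no reason to respect composition and units. The coherence obstacle you flag is the entire content, not a technicality.

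Step 2 has a related problem. ``Replacing $\mathbb{G}$ by a suitable interval'' discards the hypothesis, which concerns lifting against this particular $\mathbb{G}$. Realizing a $\pi_0$-isomorphism by a map out of the given $\mathbb{G}$ already needs the transfer argument you are trying to prove.

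Your fallback points in the right direction but misses the mechanism. Strict lifting properties are not invariant under zigzags of weak equivalences, so ``any two intervals are connected by weak equivalences'' is not enough. The paper instead builds a cospan of acyclic cofibrations $\H\hookrightarrow\K\hookleftarrow\mathbb{G}$ in $\vcat_{\{0,1\}}$:
\begin{itemize}
\item Factor $\mathbb{G}\to\I_f$ as $\mathbb{G}\hookrightarrow\widetilde{\mathbb{G}}\twoheadrightarrow\I_f$.
\item Lift $\H\to\I_f$ to a weak equivalence $\H\to\widetilde{\mathbb{G}}$.
\item Apply Brown's lemma to factor it as $\H\hookrightarrow\K\to\widetilde{\mathbb{G}}$, where the second map is a retraction of an acyclic cofibration $\widetilde{\mathbb{G}}\hookrightarrow\K$.
\end{itemize}
Given a square from $\{0\}\hookrightarrow\H$ to $\alpha$, the lift is then built in four steps:
\begin{enumerate}
\item Extend $\H\to\D$ over $\K$ using fibrancy of $\D$.
\item Restrict to $\mathbb{G}$ and lift to $\C$ by hypothesis.
\item Extend that lift over $\K$. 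This uses that $\mathbb{G}\hookrightarrow\K$ is an acyclic cofibration in $\vcat_{\{0,1\}}$ and that $\alpha$, pulled back along the object maps, is a local fibration.
\item Precompose with $\H\hookrightarrow\K$.
\end{enumerate}
This argument never passes through $\pi_0$ and uses only the fibrancy of $\D$.
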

\begin{proof}
	By the construction of the model structure on $\vcat$, it suffices to verify that $\alpha$ has the LLP against any inclusion $\{0\} \hookrightarrow \H$, for $\H$ a $\V$-interval. 
	By definition of $\V$-interval, there is a weak equivalence $\mathbb{G} \to \I_f$, where $\I_f$ is a fibrant replacement for $\I$. Factor this as an acyclic cofibration followed by an acyclic fibration in $\vcat_{\{0,1\}}$, to get maps 
	\begin{tikzcd}
		\mathbb{G} \ar[r, "\sim", hook] & \widetilde{\mathbb{G}} \ar[r, "\sim", two heads] & \I_f.
	\end{tikzcd}
	Now, let $\H$ be an arbitary $\V$-interval, equipped with a morphism $\H \to \D$.
	Lifting the map $\H \to \I_f$, we obtain a weak equivalence $\H \to \tilde{\mathbb{G}}$. 
	Apply Brown's Lemma in the model category $\vcat_{\{0,1\}}$ to factor this as 
	\begin{tikzcd}
		\H \ar[r, "\sim", hook] & \K \ar[r, "\sim"] & \widetilde{\mathbb{G}}
	\end{tikzcd}
	Where the second map is left inverse to an acylic cofibration 
	\begin{tikzcd}
		\widetilde{\mathbb{G}} \ar[r, "\sim", hook] & \K
	\end{tikzcd}
	Putting this together, we obtain a commutative diagram
	\begin{center}
	\begin{tikzcd}
		\{0\} 
			\ar[dd, hook, "\sim" labl] 
			\ar[rrrr]
			\ar[rrd]
		&&&&
		\C \ar[dd]
		\\
		& &
		\mathbb{G} 
			\ar[rru, dashed, "g"]
			\ar[d, hook, "\sim" labl]
		\\
		\H 
			\ar[rr, hook, "\sim"]
			\ar[rrrr, shift right = 2,bend right = 10]
		&&
		\K
			\ar[rr, dashed, "f"]
			\ar[rruu, dashed, "h"]
		&&
		\D
	\end{tikzcd}
	\end{center}
	In which:
	\begin{itemize}
		\item $f$ exists by applying the fact that $\D$ is fibrant and $\H \to \K$ is an acylic cofibration.
		\item $g$ exists by the assumption that $\alpha: \C \to \D$ has the RLP against the map $\{0\} \to \mathbb{G}$.
		\item $h$ exists by pulling back the diagram to $\vcat_{\{0,1\}}$ and applying the fact that $\mathbb{G} \to \K$ is an acyclic cofibration, and that fibrations in $\vcat_{\{0,1\}}$ are exactly the local fibrations.
	\end{itemize}
	Then, the composite 
	\begin{tikzcd}
		\H \ar[r, hook, "\sim"] & \K \ar[r, dashed] & \C
	\end{tikzcd}
	gives a lift. 
	We conclude that $\alpha: \C \to \D$ has the RLP against $\V$-intervals, and (as it is a local fibration by assumption) it is therefore a fibration.
\end{proof}

\begin{lemma}\label{lem:fibrations-are-naive}
	Let $\V$ be a monoidal model category satisfying the hypotheses of \cite[Theorem 1.10]{BergerMoerdijk2012}. 
	Then every fibration in $\vcat$ is in particular a naive fibration.
\end{lemma}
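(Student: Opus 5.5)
We must show that a fibration $\alpha:\C\to\D$ in $\vcat$ (Berger--Moerdijk model structure) is a local fibration and that $\pi_0\alpha$ is an isofibration.

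\emph{Local fibration.} By the Berger--Moerdijk description recalled above, fibrations are in particular local fibrations. If one prefers to argue directly, fix objects $x,y$ and a trivial cofibration $j:A\to B$ in $\V$. Consider the $\V$-category $\mathbf{2}_A$ with objects $\{0,1\}$, free on a single hom-object $A$ from $0$ to $1$; define $\mathbf{2}_B$ similarly. The induced functor $\mathbf{2}_A\to\mathbf{2}_B$ is bijective on objects, and its hom-objects are $\iv$, $A$, or the initial object, so it is a Dwyer--Kan equivalence. It is also a cofibration, being the image of a cofibration under the left adjoint $\V\to\vcat_{\{0,1\}}\to\vcat$. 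Lifting against it is exactly lifting $\C[x,y]\to\D[\alpha x,\alpha y]$ against $j$. So $\alpha$ is a local fibration.

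\emph{Isofibration on $\pi_0$.} Let $x\in\C$ and let $\phi:\alpha x\to d$ be an isomorphism in $\pi_0\D$. The plan is to realize $\phi$ by a $\V$-functor $\H\to\D$ out of a $\V$-interval $\H$ with $0\mapsto\alpha x$ and $1\mapsto d$, in such a way that the nonidentity morphism $0\to1$ of $\pi_0\H\cong\pi_0\I_\V$ maps to $\phi$. Then lifting in the square
\[
\{0\}\to\C,\qquad \{0\}\hookrightarrow\H\to\D
\]
against the fibration $\alpha$ gives $\tilde\H\colon\H\to\C$ with $0\mapsto x$. Its image of $0\to1$ is an isomorphism $x\to\tilde\H(1)$ in $\pi_0\C$ lying over $\phi$, because $\pi_0\H\to\pi_0\C\to\pi_0\D$ recovers $\phi$.

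The main obstacle is constructing $\H\to\D$ representing $\phi$. First I would take $\H_0$ to be the free $\V$-category on two generating morphisms $Q\iv\to\D[\alpha x,d]$ and $Q\iv\to\D[d,\alpha x]$, with $Q\iv$ a cofibrant replacement. These morphisms represent $\phi$ and $\phi^{-1}$ in $\Ho(\V)$, using cofibrancy of $Q\iv$ and fibrancy of the homs if needed; I would replace $\D$ fibrantly, since local fibrancy makes homotopy classes into honest maps. However, $\H_0$ is not an interval. The approach I would try is to cofibrantly replace the full sub-$\V$-category $\D_{\{\alpha x,d\}}$ of $\D$ on the two objects inside $\vcat_{\{0,1\}}$, obtaining $\K\to\D_{\{\alpha x,d\}}$. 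Then I would argue that $\K$ is weakly equivalent to $\I_\V$ in $\vcat_{\{0,1\}}$, since $\alpha x\cong d$ in $\pi_0\D$. This requires the Berger--Moerdijk result, under the hypotheses of \cite[Theorem 1.10]{BergerMoerdijk2012}, that any two-object $\V$-category whose objects are isomorphic in $\pi_0$ receives a map from an interval realizing that isomorphism. Concretely, \cite{BergerMoerdijk2012} shows that for a cofibrant interval $\H$, maps $\H\to\D$ detect exactly the homotopy-invertible arrows of $\D$; I would cite this. With that in hand, the lifting argument above completes the proof.
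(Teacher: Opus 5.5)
Once you strip it down to what carries the weight, your proof is the paper's proof, with one false step that you should delete.

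\textbf{What matches.} The local fibration condition is immediate; the paper takes it as part of the definition of a fibration in the Berger--Moerdijk structure, so your $\Sigma j$ argument is correct but unnecessary. For the isofibration condition, the paper also realizes an isomorphism of $\pi_0\D$ by a $\V$-functor $\H\to\D$ out of a $\V$-interval, lifts against $\{0\}\hookrightarrow\H$, and applies $\pi_0$. The fact you say you would cite is Berger--Moerdijk's coherence axiom (their Definition 2.19), which holds for adequate $\V$ by their Lemma 2.23. This is exactly the citation the paper makes. Only one direction is needed: every homotopy-invertible arrow extends along some interval. That direction by itself produces the map $\H\to\D$, with no further construction.

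\textbf{What to delete.} The detour through $\K$ is wrong. A cofibrant replacement $\K$ of the full sub-$\V$-category on $\{\alpha x,d\}$ has hom-objects weakly equivalent to $\D[\alpha x,\alpha x]$, $\D[\alpha x,d]$, $\D[d,\alpha x]$, $\D[d,d]$. These need not be weakly equivalent to $\iv$: take a simplicial category with two isomorphic objects and a nontrivial discrete endomorphism group. So $\K$ is in general not a $\V$-interval. The isomorphism $\alpha x\cong d$ only makes $\K$ Dwyer--Kan equivalent to the endomorphism monoid of $\alpha x$. What the argument needs is a map from some interval into $\D$, not an identification of the two-object full subcategory with an interval.

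Similarly, replacing $\D$ fibrantly would change the target of the lifting problem, so it cannot be used against $\alpha$ as stated. Your final argument does not rely on it, so it too can be dropped.
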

\begin{proof}
	Let $f: \C \to \D$ be a fibration in $\vcat$. Then $f$ is by definition a local fibration, and so it suffices to show that $\pi_0 f$ is an isofibration. 
	\par Since $\V$ is in particular \textit{adequate} \cite[Definition 1.1]{BergerMoerdijk2012} it satisfies the \textit{coherence axiom} \cite[Definition 2.19]{BergerMoerdijk2012} by \cite[Lemma 2.23]{BergerMoerdijk2012}. 
	The coherence axiom implies that any commutative square in $\cat$
	\begin{center}
	\begin{tikzcd}
		\{0\} \ar[r] \ar[d]
		&
		\pi_0 \C \ar[d, "\pi_0 f"]
		\\
		\I_\cat \ar[r] 
		& \pi_0\D
	\end{tikzcd}
	\end{center}
	(where $\I_\cat$ is the walking isomorphism) arises by applying $\pi_0$ to a commutative square in $\vcat$ of the form
	\begin{center}
	\begin{tikzcd}
		\{0\} \ar[r] \ar[d]
		&
		\C \ar[d, "f"]
		\\
		\H \ar[r] 
		& \D
	\end{tikzcd}
	\end{center}
	with $\H$ a $\V$-interval. 
	Since $f$ is a fibration, this diagram in $\vcat$ has a lift; applying $\pi_0$ to this lift gives a lift for the former diagram in $\cat$, hence showing that $\pi_0 f$ is an isofibration.
\end{proof}

\begin{theorem}[Theorem \ref{alphathm:simple}]\label{thm:naive-fibration-transfer}
	Let $\W$ be a monoidal model category satisfying the conditions of \cite[Theorem 1.10]{BergerMoerdijk2012}. 
	Let $L: \V \rightleftarrows \W: R$ be a Quillen adjunction with $R$ lax monoidal, and suppose that: 
	\begin{enumerate}
		\item The Dwyer-Kan model structure on $\vcat$ exists and has the naive fibration property (Definition \ref{dfn:naive-fibration-property}).
		\item $\rcat$ preserves naive fibrations between fibrant objects.
	\end{enumerate}
	Then the induced adjunction 
		\adjunctiondiagram{\catfunctor{L}}{\vcat}{\wcat}{\catfunctor{R}}
	is a Quillen adjunction, and $\wcat$ has the naive fibration property. 
\end{theorem}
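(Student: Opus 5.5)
The plan is to use the standard criterion (Dugger, or Hirschhorn's Proposition 8.5.4 style) for recognizing Quillen adjunctions. A right adjoint between model categories is right Quillen as soon as it preserves trivial fibrations and fibrations between fibrant objects. So I will check these two preservation properties for $\rcat$, and afterwards deduce the naive fibration property for $\wcat$ by pulling back along $\rcat$.

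\textbf{Trivial fibrations.} In the Dwyer-Kan model structure these are the local trivial fibrations that are surjective on objects. Since $\rcat$ is computed by applying $R$ to each hom-object and is the identity on object sets, it preserves surjectivity on objects. Since $R$ is right Quillen, it preserves local trivial fibrations. Hence $\rcat$ preserves trivial fibrations.

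\textbf{Fibrations between fibrant objects.} Let $\alpha:\C\to\D$ be such a fibration in $\wcat$.
\begin{enumerate}
\item By Lemma \ref{lem:fibrations-are-naive}, $\alpha$ is a naive fibration.
\item By hypothesis (2), $\rcat\alpha$ is a naive fibration in $\vcat$.
\item To apply the naive fibration property of $\vcat$ (hypothesis (1)) and conclude that $\rcat\alpha$ is a fibration, I must know that $\rcat\C$ and $\rcat\D$ are fibrant.
\item Fibrant objects of $\wcat$ are locally fibrant in the Berger--Moerdijk setting, and $R$ preserves fibrant objects, so $\rcat\C$ and $\rcat\D$ are locally fibrant.
\item It remains to see that locally fibrant $\V$-categories are fibrant in $\vcat$. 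The map to the terminal $\V$-category $\ast$ is a local fibration, and its image under $\pi_0$ is trivially an isofibration. I would argue that the naive fibration property, applied to maps into the fibrant object $\ast$, gives fibrancy. If that reading is too circular, I would instead use the known description of fibrant objects in the Muro/Berger--Moerdijk structures as the locally fibrant ones.
\end{enumerate}

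\textbf{Naive fibration property for $\wcat$.} One direction is Lemma \ref{lem:fibrations-are-naive}. For the converse, let $\alpha:\C\to\D$ be a naive fibration between fibrant objects of $\wcat$. By the argument above, $\rcat\alpha$ is a fibration in $\vcat$.
\begin{enumerate}
\item Take $\H$ to be a cofibrant replacement of $\I_\V$ in $\vcat_{\{0,1\}}$. Then $\rcat\alpha$ has the RLP against $\{0\}\to\H$.
\item By adjunction, $\alpha$ has the RLP against $\lcat\{0\}\to\lcat\H$.
\item Since $\lcat$ preserves object sets and $\{0\}$ is the free $\V$-category on one object, $\lcat\{0\}=\{0\}$.
\item By Lemma \ref{lem:single-interval-fibrant-objects} it then suffices to show that $\mathbb{G}=\lcat\H$ is a $\W$-interval.
\end{enumerate}

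\textbf{Main obstacle.} Showing that $\lcat\H$ is weakly equivalent to $\I_\W$ is the step I expect to be hardest. By Theorem \ref{thm:enriched-adjunction-fixed-object-set}, $\lcat_{\{0,1\}}$ is left Quillen, so $\lcat\H$ is cofibrant and models the derived image of $\I_\V$. I would first try to compare it with $\I_\W$ through the map adjoint to $\H\to\rcat(\I_\W^f)$. If that comparison cannot be shown to be a local weak equivalence without a unit hypothesis, the fallback is a different route. I would use the coherence axiom for $\W$ directly, as in the proof of Lemma \ref{lem:fibrations-are-naive}: realize the isofibration lifting of $\pi_0\alpha$ by a single $\W$-interval, and then invoke Lemma \ref{lem:single-interval-fibrant-objects}.
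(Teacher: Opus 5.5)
\textbf{First half (Quillen adjunction).} This follows the paper's proof essentially verbatim: the right-Quillen criterion, local preservation of trivial fibrations, Lemma \ref{lem:fibrations-are-naive}, hypothesis (2), and then the naive fibration property of $\vcat$. You are right that the last step needs $\rcat\C$ and $\rcat\D$ to be fibrant, a point the paper passes over silently. Your first justification, via the map to $\ast$, is circular. The naive fibration property only constrains maps already known to lie between fibrant objects. The fallback you name is what is actually being used: fibrant objects of $\vcat$ are the locally fibrant ones, as in Berger--Moerdijk's canonical structure. Note that this is an input about $\vcat$, not a consequence of hypothesis (1) as literally stated.

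\textbf{Second half (naive fibration property for $\wcat$).} This is where the genuine gap lies. Your skeleton is exactly the paper's:
\begin{enumerate}
\item push $\rcat\alpha$ through the naive fibration property of $\vcat$;
\item move the lifting property against $\{0\}\to\H$ across the adjunction;
\item use $\lcat\{0\}=\{0\}$;
\item finish with Lemma \ref{lem:single-interval-fibrant-objects}.
\end{enumerate}
However, you leave the decisive step, that $\lcat\H$ is a $\W$-interval, unresolved, and you expect it to need a unit hypothesis.

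The missing idea is to feed the first half of the theorem back in. You have just shown that $\lcat$ is left Quillen. The map $\{0\}\to\H$ is an acyclic cofibration in $\vcat$: it is a cofibration because $\H$ is cofibrant in $\vcat_{\{0,1\}}$, and a Dwyer--Kan equivalence because $\H\simeq\I_\V$. Hence $\{0\}=\lcat\{0\}\to\lcat\H$ is an acyclic cofibration in $\wcat$. Concretely, $\iw\to(\lcat\H)_0$ is a weak equivalence and $0\cong 1$ in $\pi_0\lcat\H$. This is exactly how the paper concludes that $\lcat\H$ is an interval. Hypothesis (2), through the left-Quillen conclusion, does the job you expected a unit hypothesis to do. With this in hand, your comparison map $\lcat\H\to\I_\W^f$ also becomes tractable. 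At least on endomorphism objects it is a weak equivalence by two-out-of-three against the unit maps out of $\iw$.

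\textbf{Your coherence-axiom fallback would not close the gap.} The coherence axiom only says that a square in $\cat$ against $\pi_0\alpha$ arises from some square in $\wcat$ out of some $\W$-interval. Producing an actual lift $\H\to\C$ in $\wcat$ requires already knowing that $\alpha$ lifts against $\{0\}\to\H$, which is the statement being proved. An isofibration on $\pi_0$ gives no control over the homotopy data built into a cofibrant interval. Moreover, Lemma \ref{lem:single-interval-fibrant-objects} needs the lifting property against all squares out of one fixed interval, not one lift per square from varying intervals. If such a direct argument worked, it would show that every $\W$ satisfying Berger--Moerdijk's hypotheses has the naive fibration property with no reference to $\V$. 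The paper explicitly leaves that open.
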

\begin{proof}
	By \cite[Proposition 6.18]{Joyal2007}, it suffices to show that $\rcat$ preserves acyclic fibrations, as well as fibrations between fibrant objects. 
	The first is evident, as acyclic fibrations are defined locally and $\rcat$ is defined by applying $R$ (which preserves acyclic fibrations) locally. So, let $f : \C \to \D$ be a fibration in $\wcat$ between fibrant objects. 
	By Lemma \ref{lem:fibrations-are-naive}, $f$ is a naive fibration in $\wcat$. 
	Then $\rcat(f)$ is (by condition (2)) a naive fibration in $\vcat$, and hence has the RLP with respect to the intervals in $\vcat$. 
	It follows in particular by condition (1) that $f$ is a fibration in $\vcat$, and hence the induced adjunction is Quillen. 
	\par 
	Now we show that $\wcat$ has the naive fibration property: let $f: \C \to \D$ be a naive fibration between fibrant objects of $\wcat$. 
	Then $\rcat(f)$ is a naive fibration, so it has the RLP against $\{0\} \to \G$ for some $\V$-interval $\G$. 
	Hence, $f$ has the RLP against $\lcat(\{0\} \to \G)$.
	Note that $\lcat(\{0\}) = \{0\}$, since $\lcat$ is a left adjoint when restricted to $\vcat_{\{0\}}$, and $\{0\}$ is the initial object of $\vcat_{\{0\}}$. 
	Since $\lcat$ is left Quillen, $\{0\} \cong \lcat(\{0\}) \to \lcat\G$ is an acyclic cofibration, in particular $\G$ is an interval. Hence by Lemma \ref{lem:single-interval-fibrant-objects}, $f$ is a fibration, as desired.
\end{proof}
Hypothesis (2) of Theorem \ref{thm:naive-fibration-transfer} is phrased in a somewhat general form. 
The following gives an easy to verify (and commonplace) condition which implies that this hypothesis holds.
\begin{proposition}
	Suppose in the setting of Theorem \ref{thm:naive-fibration-transfer} that the colax structure map $L(\iv) \to \iw$ has a left inverse in $\Ho(\W)$; i.e. such that it makes $L(\iv)$ a retract of $\iw$ in $\Ho(\W)$. Then condition (2) is satisfied. In particular, if $L(\iv) \to \iw$ is a weak equivalence, then condition (2) is satisfied.
\end{proposition}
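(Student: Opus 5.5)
Throughout, fibrancy is used to replace $L(\iv)$ by the left derived functor $\mathbb{L}L(\iv)=L(Q\iv)$. The plan is to compare $\pi_0\C$ with $\pi_0\rcat\C$ for a locally fibrant $\W$-category $\C$. Then I transport the isofibration lifting property from $\pi_0 f$ to $\pi_0\rcat f$ along this comparison.

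\emph{Step 1: local fibrations.} Let $f:\C\to\D$ be a naive fibration between fibrant objects of $\wcat$. Since $R$ is right Quillen and $\rcat$ acts by $R$ on hom-objects, $\rcat f$ is a local fibration. What remains is to show that $\pi_0\rcat f$ is an isofibration.

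\emph{Step 2: the comparison functor.} For fibrant $\C$ and objects $x,y$, the derived adjunction gives
$$\pi_0\rcat\C[x,y]=\Hom_{\Ho(\V)}(\iv,R\C[x,y])\cong \Hom_{\Ho(\W)}(\mathbb{L}L(\iv),\C[x,y]).$$
Precomposition with the colax map $\varepsilon:\mathbb{L}L(\iv)\to\iw$ defines maps
$$\phi_\C:\Hom_{\Ho(\W)}(\iw,\C[x,y])\to\Hom_{\Ho(\W)}(\mathbb{L}L(\iv),\C[x,y]).$$
I will check that these assemble into an identity-on-objects functor $\phi_\C:\pi_0\C\to\pi_0\rcat\C$, natural in $\C$. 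The required facts are that $\phi$ respects units and composition. Both reduce to the statement that the lax structure maps $\iv\to R\iw$ and $R A\otimes RB\to R(A\otimes B)$ are mates of the colax maps $L(\iv)\to\iw$ and $L(X\otimes Y)\to LX\otimes LY$, together with the compatibility of the comultiplication with $\varepsilon$. Naturality gives
$$\pi_0\rcat f\circ\phi_\C=\phi_\D\circ\pi_0 f.$$

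Now let $s$ be the given left inverse, so $s\circ\varepsilon=\mathrm{id}$ in $\Ho(\W)$. Then every $h\in\Hom_{\Ho(\W)}(\mathbb{L}L(\iv),\C[x,y])$ satisfies
$$h=(h\circ s)\circ\varepsilon=\phi(h\circ s).$$
Hence $\phi_\C$ is full, and it is bijective on objects. If $\varepsilon$ is a weak equivalence, then $\phi_\C$ is even an isomorphism of categories. In that case the isofibration property of $\pi_0\rcat f$ follows at once from that of $\pi_0 f$ via the naturality square, which settles the ``in particular'' clause.

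\emph{Step 3: lifting isomorphisms.} Let $x\in\C$ and let $u:fx\to y$ be an isomorphism in $\pi_0\rcat\D$ with inverse $v$. I lift $u$ to an isomorphism $\tilde u:fx\to y$ in $\pi_0\D$. Since $f$ is a naive fibration, $\pi_0 f$ lifts $\tilde u$ to an isomorphism $\hat u:x\to x'$ in $\pi_0\C$ with $f(\hat u)=\tilde u$. Then $\phi_\C(\hat u)$ is an isomorphism in $\pi_0\rcat\C$, and by naturality it lies over $\phi_\D(\tilde u)=u$, as required.

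\emph{Step 4: the main obstacle.} The hard part is choosing $\tilde u$ to be invertible. Fullness alone gives preimages $\tilde u,\tilde v$ of $u,v$, but $e=\tilde v\tilde u$ only satisfies $\phi(e)=\phi(\mathrm{id})$, and $\phi$ need not be faithful. My first attempt is to exploit the splitting $s$ more concretely. The kernel of $\phi$ on $\Hom(\iw,\D[x,x])$ consists of differences detected by the idempotent $\varepsilon\circ s$ on $\iw$. I would show that $e$ acts invertibly, for instance by showing that the maps lying over identities form a subgroup under composition. This amounts to checking that $e$ and $\mathrm{id}$ agree on the summand $\mathbb{L}L(\iv)$ of $\iw$, and that composition in $\D$ is unital with respect to that summand. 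Failing this, I would replace $\tilde u$ by a corrected lift $\tilde u\circ(\text{inverse of }e)$ built from the retraction.
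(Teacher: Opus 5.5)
Your Steps 1 and 2 are correct, and they prove the ``in particular'' clause. If $\varepsilon$ is invertible in $\Ho(\W)$, then $\phi_\C$ is an isomorphism of categories natural in $\C$, and the isofibration property transfers along the naturality square. Your $\phi_\C$ is exactly the functor $\iota^*$ of the paper's proof, and in this case the paper's retract diagram amounts to nothing more than your argument.

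The general retract case, however, rests entirely on Step 4, which is a list of strategies rather than an argument. It cannot be completed, because the claim is false under that hypothesis. Here is a counterexample:
\begin{itemize}
\item Take $\V=\sset$ with Bergner's structure and $\W$ chain complexes, so $\wcat$ is $\dg$-categories (any pointed $\W$ satisfying the Berger--Moerdijk hypotheses would do).
\item Let $L$ be the constant functor at $0$ and $R$ the constant functor at the point. This is a Quillen adjunction, $R$ is trivially lax monoidal, and $\varepsilon\colon L(\iv)=0\to\iw$ has the left inverse $\iw\to 0$.
\item Every hom-object of $\rcat\C$ is the point, so $\pi_0\rcat\C$ is the indiscrete category on $\ob\C$.
\item Let $\D$ be the coproduct of two copies of the unit $\dg$-category, with objects $y_1,y_2$, and let $f\colon\C\to\D$ be the inclusion of the full subcategory on $y_1$.
\item Then $f$ is a local fibration between fibrant objects. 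Also $\pi_0 f$ is an isofibration, since $y_1\not\cong y_2$ in $\pi_0\D$: the hom-groups between them vanish, while $\pi_0\D[y_1,y_1]=\mathbb{Z}$.
\item But $\pi_0\rcat f$ is the inclusion of one object into the indiscrete category on $\{y_1,y_2\}$, which is not an isofibration.
\end{itemize}
In the terms of your Step 4: $\phi_\D$ is constant on hom-sets, and the elements of $\pi_0\D[y_1,y_1]$ lying over the identity include non-invertible ones such as $0$ and $2$. Worse, the isomorphism $y_1\to y_2$ of $\pi_0\rcat\D$ has no invertible preimage at all, so no corrected lift $\tilde u$ exists.

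The obstruction you isolated is exactly the hole in the paper's own proof. The paper asserts a retract diagram $\pi_0\rcat\C\xrightarrow{r^*}\pi_0\C\xrightarrow{\iota^*}\pi_0\rcat\C$ in $\cat$ and invokes closure of isofibrations under retracts. But $r^*$, precomposition with the retraction $r$, is only a map of hom-sets, not a functor. The identity of $x$ in $\pi_0\rcat\C$ corresponds to $\mathrm{id}_x\circ\varepsilon$, and $r^*$ sends it to $\mathrm{id}_x\circ\varepsilon\circ r$. In general this is the identity only when $\varepsilon r=\mathrm{id}$, i.e.\ when $\varepsilon$ is invertible. In the example above, $r^*$ sends the isomorphism $y_1\to y_2$ to the zero map.

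So the proposition holds only when $\varepsilon$ is invertible in $\Ho(\W)$, and your argument already covers that case. The example is a pointed analogue of Section \ref{section:necessity-lax-unitality}: here $\lcat$ sends a simplicial category to the discrete $\dg$-category on its objects. Hence it fails to preserve Dwyer--Kan equivalences that are not bijective on objects.
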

\begin{proof}
	Since $\rcat$ preserves local fibrations, it remains only to show that it preserves isofibrations of homotopy categories. Let $f: \C \to \D$ be a morphism in $\wcat$ which induces an isofibration $\pi_0 f : \pi_0 \C \to \pi_0\D$; we wish to verify that $\pi_0 \rcat f$ is an isofibration of categories. Let $\iota: L(\iv) \to \iw$ be the colax structure map in $\Ho(\W)$, and $r: \iw \to L(\iw)$ its right inverse in $\Ho(\W)$.
	We have a retract diagram
	\begin{center}
	\begin{tikzcd}[sep = huge]
		\pi_0\rcat\C 
			\ar[d, "\pi_0 \rcat f"] 
			\ar[r, "r^*"]
		& \pi_0 \C 
			\ar[d, "\pi_0 f"]
			\ar[r, "\iota^*"]
		& \pi_0 \rcat \C
			\ar[d, "\pi_0 \rcat f"] 
		\\
		\pi_0\rcat\D
			\ar[r, "r^*"]
		& \pi_0 \D
			\ar[r, "\iota^*"]
		& \pi_0 \rcat \D
	\end{tikzcd}
	\end{center}
	in which the vertical arrows are induced by $f$ and the horizontal arrows are induced by $\iota$ and $r$, using the fact that 
	$$\pi_0 \rcat \C [x,y] = \Ho(\V)[\iv, \rcat(\C)[x,y]] \cong \Ho(\V)[\iv, R(\C[x,y])] \cong \Ho(\W)[L(\iv), \C[x,y]]$$
	Since isofibrations are closed under retracts, the result now follows.
\end{proof}
We close this section by demonstrating how Theorem \ref{thm:naive-fibration-transfer} gives a general method to transfer proofs of the naive fibration property to other settings.
\begin{corollary}\label{thm:naive-fibration-cartesian}
	Let $\V$ be a cartesian monoidal model category satisfying the hypotheses of \cite[Theorem 1.10]{BergerMoerdijk2012}. Then $\vcat$ has the naive fibration property.
\end{corollary}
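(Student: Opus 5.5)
The plan is to apply Theorem \ref{thm:naive-fibration-transfer} with $\V = \set$ and $\W$ the given cartesian monoidal model category. Take the adjunction $L: \set \rightleftarrows \W : R$ where $L(S) = \coprod_S \ast$ is the copower of the terminal object and $R = \Hom_\W(\ast, -)$ is the global sections functor. Here $\set$ carries the trivial model structure: weak equivalences are the isomorphisms and every map is a (co)fibration. Then $\set\text{-}\cat = \cat$ with the canonical (folk) model structure.

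The steps are as follows.

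\begin{enumerate}
\item Check that this is a Quillen adjunction. $R$ preserves all fibrations trivially. It also preserves acyclic fibrations: an acyclic fibration $p : X \to Y$ in $\W$ has the RLP against $\emptyset \to \ast$, which is a cofibration because $\ast$ is the unit and we may use the cofibrancy of the unit assumed in \cite[Theorem 1.10]{BergerMoerdijk2012}. So $R(p)$ is surjective. Alternatively, one can argue on the left: $L$ sends every map to a map between coproducts of copies of $\ast$, and these are cofibrations when $\ast$ is cofibrant.
\item Note that $R$ is lax (indeed strong) monoidal for the cartesian structures, since $\Hom(\ast, X \times Y) = \Hom(\ast,X) \times \Hom(\ast, Y)$.
\item The colax map $L(\ast) \to \ast$ is an isomorphism, so by the Proposition above, condition (2) holds. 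Concretely, $\pi_0 \rcat \C$ is just $\pi_0\C$ computed with honest global sections. Naive fibrations in $\cat$ (local fibrations are vacuous) are exactly the isofibrations, and $\rcat$ of a naive fibration between fibrant objects is an isofibration via the retract argument.
\item Verify condition (1): the canonical model structure on $\cat$ exists and its fibrations are exactly the isofibrations. Since the local condition is vacuous, this is the naive fibration property.
\end{enumerate}

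Theorem \ref{thm:naive-fibration-transfer} then gives that $\vcat$ has the naive fibration property.

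The main obstacle is step 3. The Proposition's retract argument identifies $\pi_0\rcat\C[x,y] = \Ho(\set)[\ast, \Hom_\W(\ast,\C[x,y])]$ with $\Ho(\W)[L\ast, \C[x,y]]$. That identification uses the derived adjunction, which requires $\C[x,y]$ to be fibrant. This holds because we restrict to fibrant $\C$, and fibrant objects in the Berger--Moerdijk structure are locally fibrant. It also requires $L(\ast) = \ast$ to be cofibrant, so that $\Hom_\W(\ast, -)$ computes maps in the homotopy category only up to homotopy. To handle this, I would first try to use the cofibrancy of the unit from the hypotheses of \cite[Theorem 1.10]{BergerMoerdijk2012}. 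If that is unavailable, I would replace $\set$ by $\W$'s own cofibrant replacement of the unit and take $\V=\set$ with $L(S) = \coprod_S Q\ast$. In that case the colax map $Q\ast \to \ast$ is a weak equivalence, and the Proposition still applies.
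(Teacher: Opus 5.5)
\textbf{There is a genuine gap: step 1 is false, and the approach cannot be repaired while the source is $\set$.}

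With the trivial model structure on $\set$, every map is a cofibration and the acyclic fibrations are the bijections. So for $L(S)=\coprod_S\ast$ to be left Quillen, $L$ must send the codiagonal $\{a,b\}\to\{c\}$ to a cofibration $\ast\amalg\ast\to\ast$. Dually, $R=\Hom_\W(\ast,-)$ must send every acyclic fibration to a \emph{bijection}. Your lifting argument against $\emptyset\to\ast$ only gives surjectivity. Both requirements already fail for $\W=\sset$, which satisfies the hypotheses. First, $\Delta^0\amalg\Delta^0\to\Delta^0$ is not a monomorphism. Second, the nerve of the contractible groupoid on two objects maps to $\Delta^0$ by an acyclic fibration that is not injective on vertices. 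Your remark that maps between coproducts of copies of a cofibrant object are cofibrations holds only for injective maps of index sets, and using $Q\ast$ in place of $\ast$ changes nothing.

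Consequently there is no derived adjunction. The identification the Proposition following Theorem \ref{thm:naive-fibration-transfer} relies on, $\Ho(\set)[\ast,\Hom_\W(\ast,X)]\cong\Ho(\W)[\ast,X]$, fails regardless of fibrancy or cofibrancy. The left side is the set of actual points of $X$; the right side is the set of homotopy classes of points.

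In fact condition (2) of Theorem \ref{thm:naive-fibration-transfer} is false for your $\rcat$, which takes the strict underlying category. Take $\W=\sset$ and apply $\mathrm{Ex}^\infty$ locally to the Dwyer--Kan free resolution of the walking isomorphism. This gives a fibrant simplicial category mapping to the walking isomorphism by a naive fibration: a local Kan fibration that is an isomorphism on $\pi_0$. On strict underlying categories, however, it is the map from a free category (which has no non-identity isomorphisms) to the walking isomorphism. That map is not an isofibration.

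What is missing is a source model category whose homotopy theory sees homotopies between points. This is exactly what the paper uses: $\sset$ with the Kan--Quillen structure.

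\begin{itemize}
\item By the universal property of $\sset$ \cite{Dugger2001}, there is a left Quillen functor $L\colon\sset\to\W$ with $L(\Delta^0)$ a cofibrant replacement of $\ast$.
\item Its right adjoint preserves finite products, so it is (strong) monoidal for the cartesian structures. This is where cartesianness enters.
\item The colax unit map $L(\Delta^0)\to\ast$ is a weak equivalence, so the Proposition supplies condition (2).
\item Condition (1) is Bergner's theorem that $\sset$-categories have the naive fibration property \cite{BergnerModelStructure}.
\end{itemize}

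Theorem \ref{thm:naive-fibration-transfer} then gives the corollary.
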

\begin{proof}
	Let $L: \sset \to \V$ be a left Quillen functor which sends the terminal object of $\sset$ to a cofibrant replacement of the terminal object of $\V$. Such a functor exists (and is ``unique up to contractible choice'') by e.g. the universal property of the model category $\sset$ \cite{Dugger2001}. 
	Then applying Theorem \ref{thm:naive-fibration-transfer} to this adjunction gives the desired result.
\end{proof}
Of course, the argument of the above theorem applies to more than simply cartesian monoidal categories; it applies to any monoidal model category equipped with a lax monoidal right adjoint $\V \to \sset$ whose left adjoint weakly preserves the unit. 
For instance, this recovers the fact that $\dg-\cat$ has the naive fibration property.
One can compare Higher Topos Theory Theorem A.3.2.24, which gives a model structure on $\vcat$ which satisfies the naive fibration property provided $\V$ is ``excellent'' \cite{LurieHTT}.
We suspect one could enhance the above theorem by mapping not out of $\sset$, but out of some ``universal'' monoidal model category $\V$, to show that all $\V$ satisfying Berger and Moerdijk's hypotheses (perhaps with some additional assumptions) have the naive fibration property.
We will not attempt such a thing here, however.

\section{Groupoid-equivariant chain complexes}
\label{section:chgpd}
Henceforth all chain complexes are assumed to be over $\mathbb{Z}$, and are homologically graded, concentrated in nonnegative degrees. 
We denote the category of such chain complexes $\chbasic$. 
We denote the by $\otimes$ the standard tensor product of chain complexes, and by $\mathbb{Z}[0]$ the unit for this monoidal product.
\par 
Denote by $\dgalg$ the category of $\dg$-algebras; i.e. monoids in $\chbasic$ with respect to $\otimes$. Similarly denoteby $\dgalgaug$ the category of \textit{augmented} $\dg$-algebras; i.e. chain complexes $A_\bullet$ equipped with the structure of a monoids for $\otimes$ and also equipped with a map of monoids $A_\bullet \to \mathbb{Z}[0]$, where $\mathbb{Z}[0]$ is the unit for $\otimes$. 
Then there is an adjunction
\adjunctiondiagram{I/I^2}{\dgalgaug}{\chbasic}{-\oplus \mathbb{Z}[0]}
called the ``square zero/indecomposables'' adjunction. 
The right adjoint takes a chain complex $C$ to the $\dg$-algebra $C \oplus \mathbb{Z}[0]$, with multiplication defined by imposing that elements of $C$ multiply together to $0$, and that the natural inclusion $\mathbb{Z}[0] \hookrightarrow C \oplus \mathbb{Z}[0]$ is the identity structure map for the monoid object $C \oplus \mathbb{Z}[0]$. 
The left adjoint $I/I^2$ takes a $\dg$-algebra with augmentation map $\epsilon: A \to \mathbb{Z}[0]$ to the chain complex $\ker(\epsilon)/\ker(\epsilon)^2$. 
\par 
The derived functor of $I/I^2$ is computed by the bar complex of an augmented $\dg$-algebra, and coincides with the Quillen homology \cite{Harper2008}.
For our purposes, there is another point of view we would like to take on constructing this adjunction: there is an adjunction
\adjunctiondiagram{U}{\chbasicaug}{\chbasic}{-\oplus \mathbb{Z}[0]}
where $U$ forgets the augmentation. The right adjoint $-\oplus \mathbb{Z}[0]$
is lax monoidal, when $\chbasic$ is equipped with its \textit{cartesian} monoidal product $\oplus$ and $\chbasicaug$ is equipped with the standard monoidal product $\otimes$. 
Hence by the arguments in Section \ref{section:background}, it induces an adjunction
\adjunctiondiagram{U^{\mathrm{Mon}}}{\mathrm{Mon}_{\otimes}(\chbasicaug)}{\mathrm{Mon}_{\oplus}(\chbasic)}{(- \oplus \mathbb{Z}[0])^{\mathrm{Mon}}}
between categories of monoids, in which the right adjoint is given by applying $-\oplus\mathbb{Z}[0]$ to underlying objects, but the left adjoint is not given by simply applying $U$ to underlying objects. 
Every chain complex has a unique monoid structure with respect to $\oplus$, hence there is an equivalence of categories 
$\mathrm{Mon}_{\oplus}(\chbasic) \simeq \chbasic$, and this recovers the indecomposables/square zero adjunction above.
\par 
For the next few sections, we will use the change-of-enrichment point of view to generalize this adjunction to $\dg$-categories. 
Of course, $\chbasic$ is not a \textit{closed} monoidal category with respect to the cartesian product $\oplus$, 
as no nontrivial category with a zero object can be cartesian closed. 
To rectify this, we shall work with chain complexes over varying groupoids. 
\subsection{Groupoid-equivariant chain complexes}
We recall the notion of a chain complex with a groupoid of operators, and the resulting category $\chgpd$. Our primary reference is \cite{Brown_Higgins_Sivera_2011}. 
\par 
Let $G$ be a groupoid. A \emph{$G$-chain complex} is a functor $G \to \chbasic$. 
Denote the resulting functor category by $[G, \chbasic]$.
The assignment $G \mapsto [G, \chbasic]$ gives a functor $\gpd^{op} \to \cat$, and we define the category of groupoid-equivariant chain complexes $\chgpd$ to be the Grothendieck construction
$$\chgpd = \int_{\gpd}[-, \chbasic]$$
Thus, an object of $\chgpd$ is a pair $(G, C_\bullet)$ where $C_\bullet$ is a functor $G \to \chbasic$, and a morphism $(G, C_\bullet) \to (H, D_\bullet)$ consists of a morphism $F: G \to H$ of groupoids along with a natural transformation $C_\bullet \Rightarrow D_\bullet \circ F$. 
For a fixed groupoid $G$, can identify $[G,\chbasic]$ as equivalent to a disjoint union of categories $\coprod_{i \in I} \ch_{\ge 0}\left(\Z[G_i]\right)$, where $I$ ranges over the connected components of $G$ and $G_i$ is a group equivalent (as a groupoid) to the $i$th component of $G$. 
Thus, each of these admits the standard projective model structure. 
\begin{theorem}
 	The category $\chgpd$ admits a cofibrantly generated model structure such that a morphism $(f, \eta): (G, C_\bullet) \to (H, D_\bullet)$ is:
 	\begin{itemize}
 		\item A weak equivalence if $f$ is a weak equivalence of groupoids and $\eta: C_\bullet \to f^*D_\bullet$ is a weak equivalence of $G$-equivariant chain complexes,
 		\item A fibration if $f$ is a fibration of groupoids (i.e. an isofibration) and $\eta$ is a fibration (i.e. a surjection) of $G$-equivariant chain complexes
 		\item A cofibration if it has the left lefting property against the acyclic fibrations.
 	\end{itemize}
 	The generating (acylic) cofibrations are the generating (acylic) cofibrations of $\gpd$ (viewed as equivariant chain complexes with constant $\zero$ chain complex) along with the generating (acyclic) cofibrations of $\chbasic$ (viewed as equivariant chain complexes over the terminal groupoid).
\end{theorem}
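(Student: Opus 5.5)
We construct the model structure on the Grothendieck construction $\chgpd = \int_{\gpd}[-,\chbasic]$ and verify it with the recognition theorem for cofibrantly generated model categories (Kan's theorem, e.g. Hovey, Theorem 2.1.19). An alternative is the general result on model structures on Grothendieck constructions (Harpaz--Prasma, or Roig's integral model structure). It applies because the base $\gpd$ carries the canonical model structure, each fiber $[G,\chbasic]$ carries the projective model structure, and each restriction $f^*$ is right Quillen, preserving surjections and quasi-isomorphisms pointwise. The main additional condition needed there is that $f^*$ be homotopically invariant under equivalences of groupoids: if $f:G\to H$ is an equivalence, the Quillen adjunction $f_!\dashv f^*$ should be a Quillen equivalence. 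This holds because $[G,\chbasic]\simeq\prod_i \ch_{\ge0}(\Z[G_i])$ is invariant under equivalence of groupoids. I would present the direct verification, since it also produces the stated generators.

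We take $I = I_\gpd\cup I_{\ch}$ and $J = J_\gpd \cup J_{\ch}$. Here $I_\gpd$ consists of $\emptyset\to\ast$, $\ast\sqcup\ast\to \I_\cat$ and the map collapsing the parallel pair of isomorphisms, and $J_\gpd = \{\ast\to\I_\cat\}$, all with zero chain complex. $I_{\ch}$ and $J_{\ch}$ consist of the maps $S^{n-1}\to D^n$ and $0\to D^n$ over the terminal groupoid. We then identify the lifting classes. A map $(f,\eta):(G,C)\to(H,D)$ has the RLP against $J_\gpd$ iff $f$ is an isofibration, since the chain components are zero and impose no condition. Maps out of $(\ast, D^n)$ correspond to an object $g\in G$ and an element of $C_n(g)$. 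Hence the RLP against $J_{\ch}$ says that each $C(g)\to D(f g)$ is surjective in positive degrees, i.e. a fibration in $[G,\chbasic]$ for the projective model structure. Since projective fibrations and weak equivalences are detected objectwise, this is exactly the definition. Similarly, $I$-injectives are the maps with $f$ a surjective-on-objects, fully faithful functor and $\eta$ an objectwise acyclic fibration. These are precisely the fibrations that are weak equivalences, which gives $I\text{-inj} = J\text{-inj}\cap W$.

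The remaining conditions are as follows. $W$ satisfies 2-out-of-3 and retracts, because a composite $(G,C)\to(H,D)\to(K,E)$ has chain part $C\to f^*D\to f^*g^*E$, and $f^*$ preserves quasi-isomorphisms. Smallness holds since $\chgpd$ is locally presentable, and all domains of $I$ and $J$ are finitely presentable. The key step, and the expected main obstacle, is $J\text{-cell}\subseteq W$. Colimits in a Grothendieck construction are computed by first taking the colimit $H$ of the groupoids and then the colimit of the pushforwards $f_!C$ in $[H,\chbasic]$. A pushout along a map in $J_{\ch}$ leaves the groupoid unchanged and adds a contractible $D^n$ at the free diagram $\Z[H(g,-)]\otimes D^n$, so it is an acyclic cofibration in the fiber. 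A pushout of $\ast\to\I_\cat$ along $\ast\to G$ with zero chain complex produces $G'=G\sqcup_\ast \I_\cat$, an equivalence $i:G\to G'$ adding one new object isomorphic to $g$. The new complex is $i_!C$, and we must show that $C\to i^*i_!C$ is a quasi-isomorphism. This holds because for a fully faithful functor between groupoids the left Kan extension restricts back isomorphically: $i^*i_!C\cong C$. Transfinite compositions are handled by filtered colimits. Homology commutes with filtered colimits, filtered colimits of equivalences of groupoids are equivalences, and each transition map is fully faithful, so the unit comparison is again an isomorphism or quasi-isomorphism objectwise. Checking this interaction between the changing groupoid and Kan extension carefully is where I expect the real work to lie.
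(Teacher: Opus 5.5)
Your argument is correct, but it does not follow the paper's proof. The alternative you mention in passing is exactly what the paper does: its proof is a single citation of Harpaz--Prasma's Grothendieck-construction theorem (their Theorem 3.0.12), applied to $G\mapsto[G,\chbasic]$. It then asserts that identifying the stated generating (acyclic) cofibrations is ``easy to check.'' You instead verify Kan's recognition theorem by hand.

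The citation is brief and hands all the homotopical work to a general result. To use it properly, though, one must check its hypotheses on $G\mapsto[G,\chbasic]$. One must also unwind its weak equivalences, which are defined through derived change-of-fiber functors, into the objectwise condition of the statement, namely that $C\to f^*D$ is a quasi-isomorphism. That step uses that $f^*$ preserves all quasi-isomorphisms and is an equivalence of categories when $f$ is an equivalence of groupoids.

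Your route is self-contained and produces exactly the generating sets in the statement, which the paper leaves implicit. The only substantive step, $J\text{-cell}\subseteq W$, is handled with the right ideas:
\begin{itemize}
\item A pushout of a $J$-map either adds the acyclic summand $g_!D^n$ over a fixed groupoid, or is a fully faithful, injective-on-objects inclusion $i$ with $i^*i_!C\cong C$.
\item For a transfinite composite, each colimit inclusion $j_\alpha$ is fully faithful, so $j_0^*C_\lambda\cong\operatorname{colim}_\alpha i_{0\alpha}^*C_\alpha$.
\item Exactness of filtered colimits then finishes the argument.
\end{itemize}
The price is that your argument relies on features special to this example rather than a general criterion.

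Two small things to tighten. First, in 2-out-of-3, one case is where the composite and the first map $(f,\eta)$ are weak equivalences and you must deduce that the second map $(g,\theta)$ is one. There you get only that $f^*\theta$ is a quasi-isomorphism. So you need that $f^*$ \emph{reflects} quasi-isomorphisms when $f$ is essentially surjective, which is immediate from naturality along isomorphisms; preservation alone is not enough. Second, your ``surjective in positive degrees'' is the correct reading of the statement's parenthetical ``surjection'' for the projective structure on $\chbasic$.
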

\begin{proof}
	The existence of the model structure follows from the main result of \cite[Theorem 3.0.12]{Harpaz2014}. The statement on cofibrant generation is easy to check.
\end{proof}
The category $\chgpd$ admits two monoidal structure of interest to us: first, a tensor product $\otimes$, combining the cartesian product of groupoids with the tensor product of chain complexes, induced by the description of $\chgpd$ as a Grothendieck construction. 
We will refer to this as the ``tensor product'' of chain complexes.
The second monoidal product is the cartesian (categorical) product $\times$. 

\subsection{The monoidal (model) structures on $\chgpd$}
\begin{definition}\label{dfn:tensor-product-on-chgpd}
	We denote by $\otimes$ the monoidal product on $\chgpd$ induced by the tensor product $\otimes_G$ of $G$-equivariant chain complexes.
	Explicitly, for $(G,C_\bullet)$ and $(H, D_\bullet)$ in $\chgpd$, we have 
	$$(G,C_\bullet) \otimes (H, D_\bullet) := (G \times H, \pi_G^*(X) \otimes_{G \times H} \pi_H^*(Y))$$
	where $\pi_G$ and $\pi_H$ are the projection maps from $G \times H$, 
	and $\otimes_{G \times H}$ is the tensor product in the category of chain complexes over the groupoid $G \times H$
	(which can be defined by viewing this category as equivalent to a disjoint union of categories of chain complexes over group rings).
	We will refer to this as the \emph{tensor product} for $\chgpd$.
\end{definition}
For more detail on this tensor product, see \cite[Chapter 9]{Brown_Higgins_Sivera_2011}.

\begin{lemma}\label{lem:chgpd-pushout-product}
	$\chgpd$ satisfies the pushout product axiom with respect to both the tensor product $\otimes$ and the cartesian product $\times$.
\end{lemma}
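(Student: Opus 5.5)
The plan is to reduce to generating (acyclic) cofibrations and then check a short list of cases, one family at a time.

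\textbf{Reduction to generators.} Both $\otimes$ and $\times$ preserve colimits separately in each variable. For $\times$ this holds because $\chgpd$ is cartesian closed, by the result of Nunes and V{\'a}k{\'a}r \cite{Nunes2024}. For $\otimes$ it holds because $\pi_G^*$, $\pi_H^*$ and $\otimes_{G\times H}$ all preserve colimits in each variable, and colimits in the Grothendieck construction are computed by pushing forward along $\gpd$-colimits. Given this, the standard argument (e.g. Hovey, Corollary 4.2.5) shows it suffices to verify the pushout-product axiom when $i$ and $j$ run over the generating cofibrations and generating acyclic cofibrations described in the previous theorem. There are two kinds of generators: groupoid generators $g\colon (G,0)\to(G',0)$, with $g$ among $\emptyset\to *$, $*\amalg *\to \I_{\cat}$, the map collapsing a parallel pair (and $*\to\I_\cat$ for the acyclic ones); and chain generators $j\colon (*,C)\to(*,D)$. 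I will use the explicit formulas $(G,C)\otimes(H,D)=(G\times H,\,C\boxtimes D)$ and $(G,C)\times(H,D)=(G\times H,\,\pi_G^*C\oplus\pi_H^*D)$.

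\textbf{Easy cases.} If both maps are groupoid generators, every chain complex involved is $0$ for either product. The pushout-product is then just the pushout-product in $\gpd$ with zero complexes, and $\gpd$ is a cartesian monoidal model category. If both maps are chain generators, everything lives over the point. For $\otimes$ this is the pushout-product axiom in $\chbasic$. For $\times$ the pushout $B\oplus C\amalg_{A\oplus C}A\oplus D$ is already $B\oplus D$, so the pushout-product is an isomorphism. In the mixed case with $\otimes$, $(G,0)\otimes(*,C)=(G,0)$, so the source of the pushout-product is $(G',0)\amalg_{(G,0)}(G,0)=(G',0)$ and the map is again an isomorphism.

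\textbf{Main obstacle: the mixed case for $\times$.} Here $(G,0)\times(*,C)=(G,c_GC)$, the constant $G$-complex. The pushout-product is the map
$$(G',c_{G'}C)\amalg_{(G,c_GC)}(G,c_GD)\to(G',c_{G'}D).$$
Because the groupoid components are $G'$ on both sides, this is a map of $G'$-complexes. Its source is $c_{G'}C\amalg_{g_!c_GC}g_!c_GD$, where $g_!$ is left Kan extension along $g$.

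I would identify this map with the gluing map of the square formed by $g_!c_G(j)$, $c_{G'}(j)$ and the counits $\epsilon\colon g_!c_G\to c_{G'}$. For the cofibration claim, I would check the left lifting property against surjections of $G'$-complexes by adjunction: the data reduces to a lifting problem for $j$ in $\chbasic$ against a limit of fibrations. If $j$ is acyclic, $g_!c_G(j)$ is an acyclic cofibration because $g_!$ and $c_G$ are left Quillen. The pushout of $g_!c_G(j)$ is then acyclic, and two-out-of-three, applied together with $c_{G'}(j)$ being a weak equivalence, gives the result. If instead $g=(*\to\I_\cat)$ is acyclic, $g$ is an equivalence injective on objects, so $\epsilon$ is a weak equivalence between cofibrant-ish objects. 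The pushout map is then a weak equivalence by left properness of the projective model structure on $G'$-complexes, and so is the pushout-product.

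I expect the careful identification of pushouts in the Grothendieck construction with $g_!$ to be the fiddliest step.
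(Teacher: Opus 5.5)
Your reduction to generators, all of the $\otimes$ cases, and the two unmixed $\times$ cases are correct. This is the same route the paper takes; its proof is just ``reduce to generators; straightforward.''

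\textbf{The gap is the mixed case for $\times$, and it cannot be filled: the corner map there is not a cofibration.} Take $g=(*\amalg *\to\I_\cat,0)$ and $j=(*,0\to D^n)$ with $n\ge 1$.
\begin{itemize}
\item Left Kan extension along $*\amalg*\to\I_\cat$ places both summands at each object, so $g_!c_GD^n\cong c_{\I_\cat}(D^n\oplus D^n)$.
\item The counit $\epsilon$ is therefore the fold map $\nabla$.
\item Your corner map is $(\mathrm{id},\nabla)\colon(\I_\cat,c(D^n\oplus D^n))\to(\I_\cat,cD^n)$, which is not even levelwise injective.
\end{itemize}
Directly: test it against the acyclic fibration $(\I_\cat,cD^n)\to(\I_\cat,0)$, with top map $(\mathrm{id},\mathrm{pr}_2)$. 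A lift would have to be $(\mathrm{id},\gamma)$ with $\gamma\circ\nabla=\mathrm{pr}_2$. That is impossible, because $\gamma\circ\nabla$ takes the same value on both summands. For $j=S^{n-1}\to D^n$ the same thing happens: in degree $n$ the corner map is $\mathbb{Z}^2\to\mathbb{Z}$. Put invariantly, for the cofibrant object $X=(*,D^n)$, the map $X\amalg X\to\I_\cat\times X$ identifies the two copies of $D^n$. So $\I_\cat\times X$ is not a cylinder in the sense the pushout-product axiom demands.

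\textbf{The step that breaks is your adjunction argument.} Transposed, the lifting problem is $j$ against
$$\lim_{G'}Z\to\lim_{G'}W\times_{\lim_G g^*W}\lim_G g^*Z.$$
This is not a limit of fibrations: for $g=*\amalg*\to\I_\cat$ and $p\colon cD^n\to 0$ it is the diagonal $D^n\to D^n\times D^n$. A Leibniz-type argument of this kind needs $\epsilon\colon g_!c_G\Rightarrow c_{G'}$ to be injective. Here it is a fold map, precisely because the chain part of a cartesian product in this category is a direct sum. Separately, your assertion that $c_G$ is left Quillen is false in general. On the parallel-pair groupoid, which is equivalent to $B\mathbb{Z}$, the trivial module $\mathbb{Z}$ is not projective over $\mathbb{Z}[t^{\pm1}]$.

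Since the failure already occurs for a pair of generators, no argument can prove the cartesian half of the lemma for this model structure. The $\otimes$ half of your proof stands. The $\times$ half of the statement is false as stated, and the paper's one-line ``straightforward'' check does not address this case either.
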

\begin{proof}
	By \cite[Lemma 2.3]{SchwedeShipley1998} it suffices to check on the (acylic) cofibrant generators, which is straightforward.
\end{proof}

\begin{theorem}\label{lem:chgpd-monoid-axiom}
	$\chgpd$ satisfies the monoid axiom for both the tensor product $\otimes$ and the cartesian product $\times$. 
\end{theorem}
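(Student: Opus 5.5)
We reduce the monoid axiom, for each monoidal product, to a concrete description of the maps involved, and then check that the class of weak equivalences in $\chgpd$ is closed under the relevant pushouts and transfinite compositions. By the argument of \cite[Lemma 3.5]{SchwedeShipley1998}, since $\chgpd$ is cofibrantly generated it suffices to treat maps $X \otimes j$ (resp.\ $X \times j$) where $j$ ranges over the generating acyclic cofibrations listed above. These are of two kinds: the generating acyclic cofibration of $\gpd$, namely $\{0\} \hookrightarrow \I_\cat$ (the walking isomorphism) with zero chain complex, and the generating acyclic cofibrations $0 \to D^n$ of $\chbasic$, placed over the terminal groupoid.

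\textbf{Step 1: the tensor product.} For $X = (G, C_\bullet)$ we compute:
\begin{itemize}
\item $X \otimes (\ast, 0 \to D^n)$ is $(G, 0) \to (G, C_\bullet \otimes D^n)$. This is identity on groupoids and an injection of $G$-chain complexes with contractible (degreewise split) cokernel.
\item $X \otimes (\{0\} \to \I_\cat, 0)$ is $(G,0) \to (G \times \I_\cat, 0)$, an acyclic cofibration of groupoids with zero chain part.
\end{itemize}
Pushouts in $\chgpd$ are computed by pushing out the groupoids and then left Kan extending (pushing forward) the chain complexes and taking the pushout over the new groupoid. So a pushout of a map of the first kind changes only the chain complex, by a direct sum with a contractible complex, which is a quasi-isomorphism. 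A pushout of a map of the second kind along $(G,0) \to (K, E)$ gives $(K \amalg_G (G\times \I_\cat), E')$, where $E'$ is the pushforward of $E$. Since $K \to K \amalg_G (G\times\I_\cat)$ is an equivalence of groupoids (pushout of an injective-on-objects equivalence), pushforward along it is an equivalence of functor categories. Hence $E \to f^*E'$ is an isomorphism and the map is a weak equivalence. Finally, transfinite compositions of such maps are weak equivalences: filtered colimits of groupoids preserve equivalences along injective-on-objects maps, and filtered colimits of chain complexes preserve quasi-isomorphisms.

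\textbf{Step 2: the cartesian product.} Here $X \times (\ast, 0\to D^n)$ involves computing products in $\chgpd$, which pair the product of groupoids $G \times \ast = G$ with $\pi^*C \oplus \pi^*D^n$. So the map is $(G, C) \to (G, C \oplus D^n)$, again a split injection with contractible cokernel. Similarly $X \times (\{0\}\to\I_\cat, 0)$ is $(G, C) \to (G\times \I_\cat, \pi^*C)$, an equivalence of groupoids with an isomorphism on chain parts. The one subtlety is that the target $(\{0\}\to \I_\cat,0)$ carries zero chain part while $X$ does not, so the chain part of the product is the pullback of $C$ rather than $0$. Pushouts of these two types and their transfinite composites are then handled exactly as in Step 1, using that pushforward along an injective-on-objects equivalence of groupoids is an equivalence, and that pushforward commutes with the relevant colimits.

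\textbf{Main obstacle.} The main obstacle is making the pushout computation in the Grothendieck construction precise. We must verify that the chain part of a pushout along a groupoid equivalence $G \to G'$ is obtained by pushing forward and then taking the ordinary pushout. We must also verify that left Kan extension along an injective-on-objects equivalence composed with restriction is naturally isomorphic to the identity. We also need care in the transfinite composite, where the groupoids change at every stage. We would handle this by pushing everything forward to the colimit groupoid, reducing to a filtered colimit of quasi-isomorphisms in a single category $[H, \chbasic]$.
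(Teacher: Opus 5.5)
Your proof is correct and shares the paper's skeleton: the Schwede--Shipley reduction to maps $X\otimes j$ and $X\times j$ with $j$ a generating acyclic cofibration, followed by closure of weak equivalences under transfinite composition. You carry out the key verification differently. You compute pushouts in the Grothendieck construction directly via pushforward $u_!$. The paper instead asserts that every $X\times j$ is an acyclic cofibration, and for $X\otimes(0\to D^n)$ it appeals to the monoid axiom for chain complexes over group rings.

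The paper's route is shorter, but yours is the one that actually goes through in the cartesian case. Take $X=(G,0)$ with $G$ a nontrivial group. Then $X\times(\ast,0\to D^n)$ is $(G,0)\to(G,\pi^*D^n)$. The constant complex $\pi^*D^n$ is not projective over $\mathbb{Z}[G]$, so this map fails to lift against the acyclic fibration $(G,\mathbb{Z}[G]\otimes D^n)\to(G,\pi^*D^n)$ given by the augmentation. A lift would be a $G$-equivariant splitting of $\mathbb{Z}[G]\to\mathbb{Z}$, and none exists. So the map is not a cofibration, and the ``acyclic cofibration'' shortcut does not apply. What saves the statement is your observation: the cokernel is contractible, and $u_!$, being additive and computed degreewise, preserves contractible complexes, so each pushout merely adds a contractible summand.

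Likewise, in the tensor case the relevant object is $C\otimes_{\mathbb{Z}}D^n$ with diagonal action, and the pushouts change groupoids. Your direct computation is therefore more accurate than citing the monoid axiom for $\mathbb{Z}[G]$-complexes.

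You should state explicitly, rather than leave implicit, that $u_!$ preserves contractibility. The maps you push out are not cofibrations, so nothing formal guarantees that their pushouts are weak equivalences.
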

\begin{proof}
	By \cite[Lemma 2.3]{SchwedeShipley1998} it suffices to check on the generating acyclic cofibration. From the above theorem, there are two sorts of generating acyclic cofibrations: the ``groupoid'' one, the inclusion $\bullet \hookrightarrow \I$ where $\I$ is the walking isomorphism, and the ``chain complex'' ones, the inclusions $\mathbf{0} \hookrightarrow D^n$ where $D^n$ is the chain complex (over the terminal groupoid) with two generators $a$ and $b$ in dimensions $n$ and $(n-1)$, with $d(a) = b$. 
	\par 
	For the cartesian product, it is straightforward to see that the product of a generating acylic cofibration with any object is an acyclic cofibration, from which the result follows. 
	\par 
	For the product $\otimes$, the product of $\bullet \hookrightarrow \I$ with any object is an acylic cofibration. So it remains to check the generators $\mathbf{0} \hookrightarrow D^n$. 
	Note that weak equivalences in $\chgpd$ are closed under transfinite composition. 
	Hence, it suffices to check that a pushout along the tensor product $X \otimes (\mathbf{0} \hookrightarrow D^n)$ is always a weak equivalence. 
	This now follows from the fact that the category of chain complexes over any ring (in particular, any group ring $\mathbb{Z}[G]$) satisfies the monoid axiom \cite[Section 4]{SchwedeShipley1998}.
\end{proof}

\begin{theorem}\label{thm:chgpd-monoidal-model-for-tensor}
	$\chgpd$ is a monoidal model category with respect to the tensor product $\otimes$, satisfying the monoid axiom. 
\end{theorem}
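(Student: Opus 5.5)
The plan is to check the two axioms in the paper's definition of a monoidal model category, namely the pushout-product axiom and the unit axiom, and to take the monoid axiom from Theorem \ref{lem:chgpd-monoid-axiom}. The pushout-product axiom for $\otimes$ is Lemma \ref{lem:chgpd-pushout-product}. What remains is the unit axiom, and the cleanest way to get it is to show that the unit of $\otimes$ is cofibrant, since then the identity is a cofibrant replacement.

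First, identify the unit. By Definition \ref{dfn:tensor-product-on-chgpd} it is $(\ast, \mathbb{Z}[0])$: the terminal groupoid together with the chain complex $\mathbb{Z}$ concentrated in degree $0$. Indeed, $(\ast,\mathbb{Z}[0])\otimes(H,D_\bullet) = (\ast\times H, \pi_H^*D_\bullet\otimes \mathbb{Z}[0]) \cong (H,D_\bullet)$, and the unitors are the obvious natural isomorphisms.

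Next, show this unit is cofibrant. I would build it as a cell complex from the generators listed in the model-structure theorem.
\begin{enumerate}
\item The initial object is $(\emptyset, \text{empty functor})$.
\item The map $(\emptyset,-)\to(\ast,\mathbf{0})$ is a generating cofibration of $\gpd$, namely $\emptyset\to\bullet$, regarded as an equivariant chain complex with zero chain part. So $(\ast,\mathbf{0})$ is cofibrant.
\item The map $(\ast,\mathbf{0})\to(\ast,\mathbb{Z}[0])$ is the generating cofibration $S^{-1}=\mathbf{0}\to D^0=\mathbb{Z}[0]$ of $\chbasic$, over the terminal groupoid.
\end{enumerate}
If one's conventions for the generators of $\chbasic_{\ge0}$ differ, one can instead note that $\mathbb{Z}[0]$ is projective over $\mathbb{Z}[\ast]=\mathbb{Z}$. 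Lifting against acyclic fibrations is then direct: a lift over the groupoid part exists because $\ast$ is free, and a lift over the chain part exists because a surjective quasi-isomorphism is surjective on degree-$0$ cycles.

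With a cofibrant unit, the unit axiom follows. For any cofibrant replacement $p:Q\mathbb{I}\to\mathbb{I}$, the map $p$ is a weak equivalence between cofibrant objects. By the pushout-product axiom, tensoring with a cofibrant $A$ preserves trivial cofibrations, hence by Ken Brown's lemma it preserves weak equivalences between cofibrant objects. So $p\otimes A$ and $A\otimes p$ are weak equivalences.

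The main point to be careful about is the cofibrancy of the unit, that is, matching the conventions for generating cofibrations in nonnegatively graded chain complexes and in groupoids. The direct lifting argument above settles it in any case. The rest is immediate from earlier results.
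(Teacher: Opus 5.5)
Your proof is correct and follows the paper's route: the pushout-product axiom from Lemma \ref{lem:chgpd-pushout-product}, the unit axiom from cofibrancy of the unit together with the pushout-product axiom (you check cofibrancy explicitly, where the paper only asserts it), and the monoid axiom from Theorem \ref{lem:chgpd-monoid-axiom}. The one item the paper includes that you leave out is closedness of $\otimes$, which it cites from \cite[Chapter 9]{Brown_Higgins_Sivera_2011}. Closedness is not one of the two axioms in the paper's stated definition, but it belongs to the usual notion of a monoidal model category, and it is a hypothesis of the Schwede--Shipley reduction to generators behind the pushout-product lemma.
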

\begin{proof}
	Closure of the monoidal structure is shown in \cite[Chapter 9]{Brown_Higgins_Sivera_2011}. 
	Since the unit is cofibrant, the fact that it satisfies the pushout product axiom (Lemma \ref{lem:chgpd-pushout-product}) implies that it satisfies the unit axiom. The monoid axiom is Lemma \ref{lem:chgpd-monoid-axiom}).
\end{proof}
We will actually be slightly more interested in \textit{augmented} chain complexes equipped with their tensor product.
\begin{definition}\label{dfn:augmented-chgpd}
	Let $\mathbb{Z}[0]_\bullet \in \chgpd$ be the unit for $\otimes$ (i.e. the object with groupoid part the terminal groupoid, and chain complex part $\mathbb{Z}$ concentrated in degree $0$).
	We denote by $\chaug$ the slice (model) category of $\chgpd$ over $\mathbb{Z}[0]_\bullet$.
	As the slice category over a monoid, $\chaug$ inherits a tensor monoidal structure from $\chgpd$, which we shall also denote $\otimes$. For $f: X \to \I$ and $g: Y \to \I$, their tensor product is given by the composition
	$$f \otimes g: X  \otimes Y \to \I \otimes \I \to \I$$
	where the second map is given by the unital structure of $\I$. 
\end{definition}
\begin{theorem}\label{thm:chaug-monoidal-model-for-tensor}
	$\chaug$ is a monoidal model category with respect to the tensor product $\otimes$, and satisfies the monoid axiom.
\end{theorem}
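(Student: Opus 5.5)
The plan is to deduce everything from the corresponding facts for $\chgpd$ (Theorem \ref{thm:chgpd-monoidal-model-for-tensor}) together with the general behaviour of slice categories over a monoid. Write $\mathbb{Z}[0]_\bullet$ for the unit. Recall that the model structure on $\chaug = \chgpd/\mathbb{Z}[0]_\bullet$ is the slice structure. In it, a map over $\mathbb{Z}[0]_\bullet$ is a weak equivalence, fibration or cofibration exactly when its underlying map in $\chgpd$ is one. It is also cofibrantly generated, with generating (acyclic) cofibrations the generators of $\chgpd$ equipped with all possible augmentations.

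Next I would check that $(\chaug,\otimes)$ is a monoidal category. Its unit is the identity $\mathbb{Z}[0]_\bullet \to \mathbb{Z}[0]_\bullet$. The associativity and unit constraints are those of $\chgpd$, and they lie over $\mathbb{Z}[0]_\bullet$ because $\mathbb{Z}[0]_\bullet$ is a (commutative) monoid. If closedness is required, I would obtain the internal hom by a pullback of the $\chgpd$ internal hom. Concretely, set $\underline{\Hom}_{\mathrm{aug}}(Y,Z)$ to be the fibre product of $\underline{\Hom}(Y,Z)\to\underline{\Hom}(Y,\mathbb{Z}[0]_\bullet)$ with the point classifying the augmentation $Y\to\mathbb{Z}[0]_\bullet$. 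I would not dwell on this, since the statement only concerns the model-theoretic axioms.

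For the pushout-product axiom, the key observation is that the forgetful functor $U:\chaug\to\chgpd$ is strong monoidal and creates colimits of non-empty connected diagrams. Pushouts are such colimits, so $U$ preserves them. Hence $U(i\boxtimes j)$ is the pushout-product $Ui\boxtimes Uj$ computed in $\chgpd$. That map is a cofibration, and acyclic if $i$ or $j$ is, by Lemma \ref{lem:chgpd-pushout-product}. Since $U$ detects cofibrations and weak equivalences, the axiom follows. The unit axiom is immediate because the unit $\mathbb{Z}[0]_\bullet$ is cofibrant in $\chgpd$, hence also in the slice.

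The monoid axiom is where I expect care is needed, because it involves transfinite compositions and tensoring with arbitrary objects. Pushouts and transfinite compositions are connected colimits, so again $U$ preserves them. A map of the form $X\otimes f$ in $\chaug$, with $f$ an acyclic cofibration, maps under $U$ to $UX\otimes Uf$, with $Uf$ an acyclic cofibration of $\chgpd$. Therefore a transfinite composite of pushouts of such maps is sent by $U$ to a transfinite composite of pushouts of maps in the class $R$ for $\chgpd$. That composite is a weak equivalence by Theorem \ref{lem:chgpd-monoid-axiom}, and $U$ detects weak equivalences. The only subtle point to verify is that colimits in the slice are computed underlying for connected diagrams, including the degenerate case of a transfinite composite of length zero, which is trivially fine.
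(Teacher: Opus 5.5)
Your proposal is correct and follows the paper's route: everything is transferred from $\chgpd$ along the forgetful functor, using that colimits in the slice are computed on underlying objects and that the slice model structure is created by the forgetful functor. You spell out in detail the pushout-product, unit and monoid axioms, which the paper dispatches with ``for similar reasons''. The only divergence is closedness: you construct the internal hom explicitly as a pullback along the name of the augmentation, while the paper argues that $\otimes$ preserves colimits and is therefore a left adjoint. Both work. Note also that the forgetful functor from a slice in fact creates all colimits, not only connected ones.
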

\begin{proof}
	Since colimits in slice categories are calculated on underlying objects, $\otimes$ preserves colimits in $\chaug$ and is therefore a left adjoint; i.e. $\otimes$ is closed.
	The other axioms follow for similar reasons, and because the model structure on $\chaug$ has cofibrations/fibrations/weak equivalences given by commutative triangles whose underlying morphism is a cofibration/fibration/weak equivalence in $\chgpd$.
\end{proof}
We now move on to the cartesian monoidal structure on $\chgpd$. 
\begin{definition}
	We denote by $\times$ the cartesian product on $\chgpd$, which is given by
	$$(G,C_\bullet) \times (H, D_\bullet) := (G \times H, \pi_G^*(X) \oplus_{G \times H} \pi_H^*(Y))$$
\end{definition}
It is not immediately obvious that $\chgpd$ is cartesian closed: indeed, unlike the situation with $\otimes$, the product 
that we are inducing from is not closed, which follows immediately from the fact that for a fixed groupoid $G$, the category $\chbasic^G$ has a zero object.
However, it follows from \cite[Theorem 9.11, Example 9.19]{Nunes2024} that for any category $\A$ with finite biproducts, the category of groupoid-equivariant objects of $\A$ is cartesian closed.
As their results are phrased rather type theoretically, we will construct the cartesian closed structure here without reference to the type theoretic language. 
We will not actually use the explicit description of the closed structure, but we include it for completeness. 
We note that we will define a cartesian closed structure on $\chgpd$ in particular, but our construction will work to construct a cartesian closed structure on groupoid-equivariant objects in any category $\A$ with finite biproducts. 
\begin{definition}\label{dfn:cartesian-mapping-groupoid-chain-complex}
	For $(G,A),(H,B) \in \chgpd$, the groupoid chain complex $[(G,A),(H,B)]_\times = (K,C)$ is the chain complex with:
	\begin{enumerate}
		\item $K_0 = \Hom_{\chgpd}((G,A),(H,B))$
		\item
		For $f,g \in K_0$, the hom-set $K(f,g)$ is the set of set-functions $\eta: G_0 \to H_1$ such that:
		\begin{itemize}
			\item For any morphism $\ell: x \to y$ in $H_1$, the diagram
			\begin{center}
			\begin{tikzcd}
				f(x) \ar[r, "f(\ell)"] \ar[d, "\eta(x)"] & f(y) \ar[d, "\eta(y)"] \\
				g(x) \ar[r, "g(\ell)"] & g(y)
			\end{tikzcd}
			\end{center}
			commutes. 
			\item For any $x \in G_0$, and $\alpha \in A(x)$, 
			$$g(\alpha) = f(\alpha)^{\eta(x)}$$
		\end{itemize}
		\item For $f \in K_0$, the chain complex $C(f)$ is the limit of the composite diagram $f: G \to H \to \chbasic$. More concretely, the $n$-chains $C(f)_n$ are given by
		$$\{(\alpha_g)\in \prod_{g \in G_0} B(f(g)) \; | \; \ell(\alpha_g) = \alpha_{\ell(g)} \; \forall \ell \in G_1 \}$$
		and the differential is given componentwise.
		\item For $f,g \in K_0$ and $\eta \in K(f,g)$, the morphism $C(\eta): C(f) \to C(g)$ is given by applying limits to the natural transformation defined by $\eta$. 
	\end{enumerate}
	The last part of the definition makes sense because the two conditions on the $\eta \in K(f,g)$ implies that $\eta$ does indeed define a natural transformation between the composite diagrams $f,g: G \to H \to \chbasic$. 
\end{definition}

\begin{example}
	Let $G$ and $H$ be groups. Then for a $G$-chain complex $A_\bullet$ and an $H$-chain complex $B_\bullet$, the exponential $[(G,A),(H,B)]_\times = (K,C)$ can be described as:
	\begin{itemize}
		\item $K_0 = \Hom_{\chgpd}((G,A_\bullet),(H,B_\bullet))$; that is, an object of the groupoid $K$ is a group homomorphism $f: G \to H$ along with a $G$-chain complex homomorphism $A_\bullet \to f^*B_\bullet$.
		\item For $f,g \in K_0$, the hom-set $K(f,g)$ is the set of $h \in H$ which conjugate $f$ to $g$
		\item For $f \in K_0$, the chain complex $C(f)$ is the subcomplex of fixed points of the image of $G$ (under $f$). 
	\end{itemize}
	It is not too hard to verify that this gives an exponential object for the subcategory of $\chgpd$ where the groupoid part has a single object, i.e. is a group.
\end{example}
\begin{theorem}
		$[(G,A), (H,B)]_\times$ as defined above is the exponential for $\chgpd$, i.e. gives it the structure of a cartesian closed category. 
\end{theorem}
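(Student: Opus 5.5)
We verify the exponential property directly: for every $(J,D) \in \chgpd$ we construct a bijection, natural in $(J,D)$,
$$\Hom_{\chgpd}\big((J,D) \times (G,A), (H,B)\big) \cong \Hom_{\chgpd}\big((J,D), [(G,A),(H,B)]_\times\big).$$
The guiding idea is to exploit the fibration $\chgpd \to \gpd$ given by the Grothendieck construction. A morphism out of $(J \times G, \pi_J^*D \oplus \pi_G^*A)$ consists of three pieces of data:
\begin{enumerate}
\item a functor $F : J \times G \to H$;
\item a natural transformation $\pi_J^*D \Rightarrow F^*B$;
\item a natural transformation $\pi_G^*A \Rightarrow F^*B$.
\end{enumerate}
Here we use that $\oplus$ is a biproduct, so a map out of a direct sum is a pair of maps. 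The proof matches these three pieces, respectively, with the data of a map $(J,D) \to (K,C)$:
\begin{enumerate}
\item a functor $\Phi: J \to K$;
\item a natural transformation $D \Rightarrow \Phi^* C$.
\end{enumerate}

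First we curry the groupoid and $A$-parts together. Given $F$ and the transformation $\pi_G^*A \Rightarrow F^*B$, each $j \in J_0$ yields a functor $F(j,-): G \to H$ together with $A \Rightarrow F(j,-)^*B$. This is precisely an object $\Phi(j) \in K_0$. Each morphism $u : j \to j'$ in $J$ gives the function $\eta_u : x \mapsto F(u, \mathrm{id}_x)$ from $G_0$ to $H_1$. The first condition on $K(\Phi j, \Phi j')$ is the naturality of $F$ in the second variable, i.e. functoriality of $F$ on $J\times G$ via the interchange $F(u,\ell) = F(u,\mathrm{id})F(\mathrm{id},\ell) = F(\mathrm{id},\ell)F(u,\mathrm{id})$. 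The second condition, $g(\alpha) = f(\alpha)^{\eta(x)}$, is exactly naturality of $\pi_G^*A \Rightarrow F^*B$ with respect to morphisms $(u,\mathrm{id}_x)$, since $\pi_G^*A$ sends these to identities. Functoriality of $\Phi$ follows from functoriality of $F$ in $J$. Conversely, from $\Phi$ we reassemble $F$ and the $A$-component, using that every morphism $(u,\ell)$ factors as $(u,\mathrm{id})(\mathrm{id},\ell)$. This gives a bijection between pairs (1)+(3) above and functors $\Phi : J \to K$.

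Next we curry the $D$-part. For fixed $j$, the $D$-component of the transformation restricted to $\{j\} \times G$ is a family of chain maps $D(j) \to B(F(j,x))$. Naturality in $\ell \in G_1$, where $\pi_J^*D$ acts trivially, says exactly that this family is a cone. Such a cone is the same as a chain map $D(j) \to \lim_G F(j,-)^*B = C(\Phi j)$, and this is where the description of $C(f)$ as a limit is used. Naturality in $u \in J_1$ then corresponds to compatibility with $C(\eta_u)$, which is by definition induced on limits by $\eta_u$. So pieces (2) correspond to natural transformations $D \Rightarrow \Phi^*C$.

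The only real subtlety, and the step I expect to need care, is checking that these two currying steps are compatible and together exhaust naturality over all of $J\times G$. Naturality over general morphisms $(u,\ell)$ must reduce to naturality over $(u,\mathrm{id})$ and $(\mathrm{id},\ell)$ separately. This holds because the two classes generate $J\times G$ and naturality is closed under composition. Once that is settled, naturality of the bijection in $(J,D)$ is immediate, since both sides are defined by precomposition. Finally, we should note that the definition's first bullet is meant to quantify over $\ell$ in $G_1$ and not $H_1$, and we interpret it that way.
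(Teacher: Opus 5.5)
Your argument is correct, but it is not the route the paper takes. The paper does not verify the universal property by hand. It imports the existence of a cartesian closed structure from Nunes--V\'ak\'ar \cite[Theorem 9.19, Example 9.11]{Nunes2024}. Then, using that exponentials are unique up to isomorphism, it identifies the abstract exponential with $(K,C)$ by testing maps out of $\mathbb{Z}[n]$ against the adjunction.

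You instead prove existence and identification at once, by exhibiting a bijection $\mathrm{Hom}((J,D)\times(G,A),(H,B))\cong\mathrm{Hom}((J,D),(K,C))$ natural in $(J,D)$. Each clause of the definition then has a visible source:
\begin{itemize}
\item the biproduct splits a map out of $(J\times G,\pi_J^*D\oplus\pi_G^*A)$ into a functor $F$ and two transformations;
\item condition (a) is the interchange law for $F$;
\item condition (b) is naturality of the $A$-component along $(u,\mathrm{id}_x)$;
\item the limit description of $C(f)$ is exactly what curries the $D$-component;
\item the reduction to morphisms $(u,\mathrm{id})$ and $(\mathrm{id},\ell)$ is justified because these generate $J\times G$.
\end{itemize}

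What each approach buys: the paper's route is shorter and places the result in a general framework, but its identification step is only sketched. Probing with $\mathbb{Z}[n]$ over the terminal groupoid detects only the objects of $K$ and the cycles of $C$. Pinning down the morphisms of $K$, all chains, and the functoriality of $C$ would need further test objects (e.g.\ the walking isomorphism with zero chain complex, and the disks $D^n$) plus a compatibility argument. Your route is self-contained and does not depend on the type-theoretic reference. It also works verbatim for groupoid-equivariant objects in any complete category with finite biproducts.

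A few routine points are worth stating explicitly:
\begin{itemize}
\item $K$ is a groupoid, since componentwise inverses again satisfy (a) and (b).
\item Composition in $K$ is componentwise; you use this for functoriality of $\Phi$, and condition (a) is what makes the reassembled $F(u,\ell)=\eta_u(y)\circ\Phi(j)(\ell)$ a functor.
\item Limits in nonnegatively graded chain complexes are computed degreewise, so $C(f)$ really is the complex you describe.
\end{itemize}
Your reading of the first bullet of the definition as quantifying over $G_1$ rather than $H_1$ is the correct one.
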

\begin{proof}
	The fact that $\chgpd$ is cartesian closed follows from \cite[Theorem 9.19, Example 9.11]{Nunes2024}. 
	We know that $[-,-]_\times$ is unique up to isomorphism. By considering maps $\mathbb{Z}[n] \to[(G,A), (H,B)]_\times$ and using the adjunction, one obtains the desired isomorphism.
\end{proof}

\begin{theorem}\label{thm:chgpd-monoidal-model-for-cartesian}
	$\chgpd$ is a cartesian monoidal model category, satisfying the monoid axiom.
\end{theorem}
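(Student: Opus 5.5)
We assemble Theorem \ref{thm:chgpd-monoidal-model-for-cartesian} from the results already established, following the proof of Theorem \ref{thm:chgpd-monoidal-model-for-tensor}. Four things must be checked: that $\times$ is closed, the pushout-product axiom, the unit axiom, and the monoid axiom.

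\textbf{Closure.} This is the preceding theorem: $[(G,A),(H,B)]_\times$ is an exponential. Hence $-\times X$ is a left adjoint and preserves colimits in each variable. This is needed so that the reduction of \cite[Lemma 2.3]{SchwedeShipley1998} to generators applies.

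\textbf{Pushout-product axiom.} This is Lemma \ref{lem:chgpd-pushout-product} for $\times$. If one wants to make the ``straightforward'' check explicit, take generating cofibrations of the two types: groupoid generators with zero chain complex, and chain generators $S^{n-1}\to D^n$ over the terminal groupoid.

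\begin{itemize}
\item For two groupoid generators, the pushout-product is the usual pushout-product in the cartesian model category $\gpd$, with zero chains.
\item For a groupoid generator $i$ against a chain generator $j$, the product has groupoid part $\text{(groupoid)}\times\bullet$. The chain part is the pullback of $j$ along the projection, so the pushout-product is a cofibration built from $j$ pulled back over the groupoid. It is acyclic when $i$ is $\bullet\hookrightarrow\I$, since $\I\simeq\bullet$.
\item For two chain generators, the groupoid part is trivial and $\times=\oplus$. The pushout-product is $S^{n-1}\oplus D^m\amalg_{S^{n-1}\oplus S^{m-1}} D^n\oplus S^{m-1}\to D^n\oplus D^m$, which is an isomorphism.
\end{itemize}

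\textbf{Unit axiom.} The unit is the terminal object $(\bullet,0)$, which is cofibrant: the map from the initial object $(\emptyset,\cdot)$ is the groupoid generating cofibration $\emptyset\to\bullet$. So the unit axiom is automatic, with $Q\iv=\iv$.

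\textbf{Monoid axiom.} This is Theorem \ref{lem:chgpd-monoid-axiom} for $\times$. Concretely, $X\times(\bullet\hookrightarrow\I)$ and $X\times(0\to D^n)$ are acyclic cofibrations. Acyclic cofibrations are closed under pushout and transfinite composition, so every map in the monoid-axiom class is a weak equivalence.

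\textbf{Main obstacle.} The only nonformal point is the first one: $\times$ must genuinely be closed, since $\chbasic$ itself is not cartesian closed. That is supplied by \cite{Nunes2024} through the preceding theorem. The remaining subtlety is checking that $X\times(\bullet\hookrightarrow\I)$ is acyclic for an arbitrary, not necessarily cofibrant, $X$. Here $X\times\I$ has chain part pulled back along the projection $G\times\I\to G$. The groupoid map $G\to G\times\I$ is an equivalence, and the chain components agree after pullback along it, so the map is a weak equivalence by the definition of weak equivalences in $\chgpd$.
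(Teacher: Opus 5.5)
Your route is the paper's own: closedness from the preceding theorem, then Lemma \ref{lem:chgpd-pushout-product} and Theorem \ref{lem:chgpd-monoid-axiom}. But the one step you make explicit, the mixed case of the pushout-product axiom, is wrong. It cannot be repaired, because for $\times$ the pushout-product axiom fails in this model structure.

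What you overlook is how pushouts work in $\chgpd$. A pushout computes its chain part by left Kan extension along the new groupoid map. So a groupoid cofibration that merges components produces one copy of the fibre per component of the source, and the target then folds these copies together.

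Concretely, take the cofibrations $\emptyset \to X=(\bullet,\mathbb{Z}[0])$ and $i\colon(\partial\I,0)\to(\I,0)$, where $\partial\I=\bullet\amalg\bullet$. Since $\emptyset\times Y\cong\emptyset$, their pushout-product is just $X\times i\colon(\partial\I,\mathrm{const}\,\mathbb{Z}[0])\to(\I,\mathrm{const}\,\mathbb{Z}[0])$. After Kan extension, its chain part is the fold map $\mathbb{Z}\oplus\mathbb{Z}\to\mathbb{Z}$ at each object. This map is not a cofibration:
\begin{itemize}
\item Map its source to $(\I,\mathrm{const}(\mathbb{Z}[0]\oplus D^1))$ by the inclusion on groupoids, choosing $(1,0)$ at the object $0$ and $(1,e)$ at the object $1$, where $e$ is the degree-$0$ generator of $D^1$.
\item The projection $(\I,\mathrm{const}(\mathbb{Z}[0]\oplus D^1))\to(\I,\mathrm{const}\,\mathbb{Z}[0])$ is an acyclic fibration, and the square with the identity on the bottom commutes.
\item A lift must have groupoid part $\mathrm{id}_\I$, since it is a section of the projection. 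Its chain part is then a natural transformation out of the constant functor $\mathrm{const}\,\mathbb{Z}[0]$ on $\I$. Its components at $0$ and $1$ must coincide, yet they would have to be $(1,0)$ and $(1,e)$.
\end{itemize}

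The same fold appears in your own mixed case with $i$ and $j=(\bullet,S^{n-1})\to(\bullet,D^n)$: there the pushout-product is $D^n\cup_{S^{n-1}}D^n\to D^n$ at each object of $\I$. So the claim that the pushout-product is ``a cofibration built from $j$ pulled back over the groupoid'' is false. The cartesian half of Lemma \ref{lem:chgpd-pushout-product}, on which the paper's one-line proof also rests, is false as well. With this model structure, $\chgpd$ is not a cartesian monoidal model category, so no argument along these lines can close the gap.

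A smaller point concerns your monoid-axiom step. The map $X\times((\bullet,0)\to(\bullet,D^n))$ is not an acyclic cofibration for arbitrary $X$; the paper's proof makes the same claim. For $X=(B(\mathbb{Z}/2),0)$ this map is $(B(\mathbb{Z}/2),0)\to(B(\mathbb{Z}/2),D^n)$ with trivial action. A map that is the identity on groupoids can only be a cofibration if its chain part is a projective cofibration in the fibre. But $D^n$ with trivial action is not projectively cofibrant over $\mathbb{Z}[\mathbb{Z}/2]$. These maps are, however, identity-on-groupoid split inclusions $C\to C\oplus\mathrm{const}\,D^n$ with contractible complement. Pushouts preserve that form, since the complement becomes a left Kan extension of a contractible functor. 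So the monoid axiom itself may well survive, but that is the argument it needs, not closure of acyclic cofibrations under pushout.
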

\begin{proof}
	We have just noted that it is cartesian closed. The other facts follow from Lemmas \ref{lem:chgpd-pushout-product}
	and \ref{lem:chgpd-monoid-axiom}.
\end{proof}

\section{Track $\dg$-categories and track chain-categories}
\label{section:chgpd-categories}

\subsection{Track $\dg$-categories}
Let $\chgpd^\otimes$ denote the monoidal model category of groupoid-equivariant chain complexes, with the monoidal product induced by the tensor product on chain complexes. 
In the last section we proved that this is a cofibrantly generated monoidal model category, and satisfies the monoid axiom.
It is easy to check that the unit is cofibrant, and hence the results of \cite{Muro2012} give us a model structure on the category $\trackdgcat$ of categories enriched in this. 
\begin{definition}
	A \emph{track $\dg$-category} is a category enriched in $\chgpd$ with respect to its tensor monoidal structure. We denote the resulting (model) category by $\trackdgcat$.
\end{definition}
\begin{remark}
	As an example, every $\dg$-category clearly gives a track $\dg$-category, where every groupoid is the trivial groupoid. 
Note, though, that these objects will never be cofibrant in the Dwyer-Kan model structure on track $\dg$-categories (except in the case that they have only one object). 
	We think it plausible that one could fix the underlying track category and construct a model structure on the category of track $\dg$-categories with fixed underlying track-category, and then view $\trackdgcat$ as a Grothendieck construction. We will not pursue this line of inquiry here, however. 
\end{remark}
$\dg$-categories arise naturally as a setting in which to do homological algebra.
Track $\dg$-categories should arise as a setting in which to do homological algebra of local systems (with varying groupoid). 
The example which we are most interested in is the track $\dg$-category of spaces, in which an object is a space, and $\Hom(X,Y)$ has:
\begin{itemize}
	\item groupoid part given by the fundamental groupoid of the mapping space from $X$ to $Y$, and
	\item chain complex given by the local system of chains on the \textit{universal cover of} the mapping space from $X$ to $Y$. 
\end{itemize}
In \cite{Strong2025}, we describe a functor which takes an $(\infty,1)$ category to a track $\dg$-category,
refining the functor which takes a $(\infty,1)$-category to a $\dg$-category. 
\par 
In order to obtain the adjunction that will recover the bar construction of a $\dg$-category, we will need an augmented version:
\begin{definition}
	An \emph{augmented track $\dg$-category} is a category enriched in $\chaug$ with respect to its tensor monoidal structure. Equivalently, it is an object of the slice category of $\trackdgcat$ over the category with one object, with mapping object the unit for the tensor monoidal structure on $\chgpd$. 
	We denote the resulting category by $\trackdgcataug$. 
\end{definition}

\subsection{Track $\ch$-categories}
As noted in the introduction, it is uninteresting to enrich in the cartesian monoidal category of chain complexes, because $\chbasic$ has finite biproducts.
Hence, the cartesian monoidal structure is the same as the cocartesian monoidal structure, and therefore categories enriched with respect to this monoidal structure are essentially just chain complexes, by the following theorem:
\begin{theorem}\label{thm:cocartesian-enrichment-trivial}
	Let $\V$ be a cocartesian monoidal category, and $\C$ a $\V$-category. Then all hom-objects of $\C$ are canonically isomorphic via the composition operation.
\end{theorem}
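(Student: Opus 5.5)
The plan is to exploit that in a cocartesian monoidal category the unit is the initial object $\emptyset$, and that the tensor product of two objects is their coproduct. So the data of a $\V$-category $\C$ consists of composition maps $c_{x,y,z}\colon \C[y,z] \amalg \C[x,y] \to \C[x,z]$ and identity maps $\emptyset \to \C[x,x]$. The identity maps carry no information, since they are the unique maps out of the initial object. A map out of a coproduct is determined by its two components, so composition amounts to a pair of maps
$$\lambda_{x,y,z}\colon \C[y,z] \to \C[x,z], \qquad \rho_{x,y,z}\colon \C[x,y] \to \C[x,z].$$
Concretely, $\lambda_{x,y,z}$ is $c_{x,y,z}$ precomposed with the coproduct inclusion, which is the same as $\C[y,z]\cong \C[y,z]\amalg\emptyset \to \C[y,z]\amalg\C[x,y]$.

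First, I will apply the unit axiom to these components. The left unit law says that the composite $\C[x,y] \cong \emptyset \amalg \C[x,y] \to \C[y,y]\amalg \C[x,y] \to \C[x,y]$ is the identity. Here the first map is the identity map $\emptyset\to\C[y,y]$ coproduct the identity of $\C[x,y]$, so this composite is exactly $\rho_{x,y,y}$. Hence $\rho_{x,y,y} = \mathrm{id}$, and symmetrically $\lambda_{x,x,y} = \mathrm{id}$.

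Second, I will extract the functoriality of these components from associativity. Restricting the associativity square for $\C[z,w]\amalg\C[y,z]\amalg\C[x,y]$ to each summand gives three identities:
\begin{itemize}
\item on $\C[x,y]$: $\rho_{x,z,w}\circ\rho_{x,y,z} = \rho_{x,y,w}$;
\item on $\C[z,w]$: $\lambda_{x,y,w}\circ\lambda_{y,z,w} = \lambda_{x,z,w}$;
\item on $\C[y,z]$: $\rho_{x,z,w}\circ\lambda_{x,y,z} = \lambda_{x,y,w}\circ\rho_{y,z,w}$.
\end{itemize}

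Third, I will use these identities to build mutually inverse maps between any two hom-objects. For $\C[x,y]$ and $\C[x',y']$ I take the composite
$$\C[x,y] \xrightarrow{\rho_{x,y,y'}} \C[x,y'] \xrightarrow{\lambda_{x',x,y'}} \C[x',y'].$$
To see it is invertible, I first check that $\rho_{x,y,y'}$ has inverse $\rho_{x,y',y}$. By the first identity, $\rho_{x,y',y}\circ\rho_{x,y,y'} = \rho_{x,y,y} = \mathrm{id}$, and similarly in the other order. Likewise $\lambda_{x',x,y'}$ has inverse $\lambda_{x,x',y'}$, using the second identity. So every hom-object is canonically isomorphic to every other through components of composition.

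Finally, I will check that these isomorphisms are coherent, meaning independent of the route taken. The mixed (third) identity shows that pre- and post-composition commute, which is what gives coherence.

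The only real obstacle is bookkeeping: making sure the composite in the unit law really is $\rho$ (respectively $\lambda$), which is routine once the identity map is recognized as the map out of $\emptyset$.
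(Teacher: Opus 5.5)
Your argument is correct and is essentially the paper's: split each composition map out of a coproduct into its components, use the unit law to identify the relevant component with the identity, and restrict associativity to individual summands to obtain inverses. The only difference is bookkeeping. The paper uses the two-sided component $f^{xy}_{ab}\colon \C[x,y]\to\C[a,b]$ of the ternary composite and a single five-fold associativity square, whereas you factor this into one-sided whiskerings $\rho,\lambda$; your interchange identity $\rho_{x,z,w}\circ\lambda_{x,y,z}=\lambda_{x,y,w}\circ\rho_{y,z,w}$ then makes explicit the coherence that the paper only asserts.
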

This theorem is certainly well known, but lacking a reference we shall prove it here.
\begin{proof}
	First, note that the (left) identity axiom says that there is a map $\id: \emptyset \to \C[x,x]$, such that for any $y$, the composition
	\begin{center}
	\begin{tikzcd}[sep=large]
		\C[x,y] \ar[r, "\sim "] 
			& \emptyset \coprod \C[x,y] \ar[r, "\id \coprod {\C[x,y]}"] 
			& \C[x,x] \coprod  \C[x,y] \ar[r, "\circ "] 
			& \C[x,y]
	\end{tikzcd}
	\end{center}
	is the identity. Of course, the map $\id$ is immaterial to unwinding this statement: It merely says that the component $\C[x,y] \to \C[x,y]$ of the composition map $\C[x,x] \coprod \C[x,y] \to \C[x,y]$ is the identity. 
	Now, let $x,y,a,b \in \ob \C$. We have a map
	\begin{center}
	\begin{tikzcd}
		\C[x,y] \ar[r, "\iota "] 
			& \C[a,x] \coprod \C[x,y] \coprod \C[y,b] \ar[r, "\circ "]
			& \C[a,b]
	\end{tikzcd}
	\end{center} 
	Denote this map by $f^{xy}_{ab}: \C[x,y] \to \C[a,b]$. 
	Applying associativity tells us that the square 
	\begin{center}
	\begin{tikzcd}[sep = huge]
		\C[x,a] \coprod \C[a,x] \coprod \C[x,y] \coprod \C[y,b] \coprod \C[b,y] 
			\ar[r] \ar[d, "{\C[x,a] \coprod {f^{xy}_{ab}} \coprod \C[b,y]}"]
		& \C[x,x] \coprod \C[x,y] \coprod \C[y,y] 
			\ar[d] 
		\\
		\C[x,a] \coprod \C[a,b] \coprod \C[b,y] 
			\ar[r, "{f^{ab}_{xy}}"] 
		& \C[x,y]
	\end{tikzcd}
	\end{center}
	commutes. Thus, $f^{xy}_{ab}: \C[x,y] \to \C[a,b]$ and $f^{ab}_{xy}: \C[a,b] \to \C[x,y]$ are inverse. 
	From this and associativity we see that the data of all the composition maps is exactly the data of (a contractible groupoid of) isomorphisms between the hom objects.
\end{proof}
Unlike $\chbasic$, $\chgpd$ does \textit{not} have biproducts, and so enriching in $\chgpd$ can be nontrivial. Indeed, $\chgpd$-categories we should expect to be at least as interesting as $\gpd$-categories. 
\begin{definition}
	A \emph{track chain-category} is a category enriched in $\chgpd$ with respect to its cartesian monoidal structure. We denote the resulting (model) category by $\trackchaincat$.
\end{definition}
To understand generally what a $\chgpd^\times$-category is, we first consider the simplest case: groupoid with no nonidentity morphisms. 
\par
Let $\C \in \trackchaincat$ be such that for any $x,y \in \ob \C$, the groupoid $\Pi_1 \C[x,y]$ has no nonidentity morphisms. 
Then, the basic data of $\C$ consists of an ordinary category $C$, together with, for every morphism $f \in \mor \pi_0C$ a chain complex $\C(f)$. 
The composition operations in $\C$ give us, for every composible $f$ and $g$ in $\mor \pi_0C$, a map $\C(f) \oplus \C(g) \to \C(f g)$. 
Since $\oplus$ is a coproduct, this is equivalent to maps $\C(f) \to \C(f g)$ and $\C(g) \to \C(fg)$. 
So, the data of the composition operation of $\C$ is the data of the composition operation of $\C$, along with, for every composable $f$ and $g$, maps $\rightarrow_g: \C(f) \to \C(fg)$ and $\rightarrow^f: \C(g) \to \C(fg)$.
Thus, we may imagine this to be a system of chain complexes parameterized on the \textit{morphisms} of the underlying track category.
More generally, if $\C \in \trackchaincat$ is free on a collection of local $0$-cells (morphisms), with no local $1$-cells (2-morphisms), then the data of $\C$ is the same as a functor from the poset of morphisms in $\C$ (where $f \le g$ if there exists $h$ such that $f \circ h = g$ or $h \circ f = g$) to the category of chain complexes.
\begin{example}
	Let $\C \in \trackchaincat$ have one object, with underlying track category the ordinary $1$-category with single  idempotent nonidentity morphism $f$.
	Then the data of the underlying graph of $\C$ consists of two chain complexes, $A_1$ and $A_f$. 
The composition operation endows these with the following collection of potentially nonidentity maps:
	\begin{itemize}
		\item $1_\ell: A_1 \to A_f$
		\item $1_r: A_1 \to A_f$
		\item $f_r: A_f \to A_f $
		\item $f_\ell: A_f \to A_f$
	\end{itemize}
	Asociativity and unitality constraints impose that these must satisfy the following compatibility conditions:
	$$1_\ell = f_\ell \circ 1_\ell \quad 1_r = f_r \circ 1_r \quad f_\ell^2 = f_\ell \quad f_r^2 = f_r \quad f_\ell \circ f_r = f_r \circ f_\ell \quad f_r \circ 1_\ell = f_\ell \circ 1_r$$
\end{example}
\subsection{The induced adjunction}
Consider the adjunction 
\adjunctiondiagram{U}{\chaug^\otimes}{\chgpd^\times}{F}
In which the left adjoint $U$ forgets the augmentation, and the right adjoint $F$ is given by 
taking the product with the unit $\I$ (and setting the augmentation map to be the projection). 
Then the right adjoint $F$ is lax monoidal, since $\chaug^\otimes$ is semicartesian (i.e. the unit of $\chaug^\otimes$ is the terminal object). 
We therefore obtain an induced adjunction\footnote{
	The notation $\dgbarconstruction$ is due to the connection of this adjunction to the strictification of $(\infty,1)$-categories, see \cite{Strong2025}.
	Essentially, track $\dg$-categories are very close to what one might call ``gray $(\omega,1)$-categories,'' whereas track chain-categories are close to $(\omega,1)$-categories. 
	The functor $\dgbarconstruction$ is related to ``strictification,'' though it is more clear in this setting that it may be thought of as a sort of ``linearization.''
}
\adjunctiondiagram{\dgbarconstruction}{\trackdgcataug}{\trackchaincat}{\dgbaradjoint}
We will iteratively apply Theorem \ref{thm:naive-fibration-transfer} to deduce that it is a Quillen adjunction, and furthermore that both categories have the naive fibration propery. First, we note the following:

\begin{theorem}\label{thm:track-dg-cat-naive-fibration}
	$\trackdgcataug$ has the naive fibration property.
\end{theorem}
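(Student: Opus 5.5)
We obtain the result by applying Theorem \ref{thm:naive-fibration-transfer}, in the spirit of Corollary \ref{thm:naive-fibration-cartesian}, to a Quillen adjunction whose source already has the naive fibration property and whose target is $\chaug^\otimes$. The natural source is $\sset$ with its cartesian product, since simplicial categories have the naive fibration property \cite{BergnerModelStructure}. One could instead start from $\dg$-categories (Tabuada); the $\sset$ route seems cleaner.

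First, the adjunction. Let $L:\sset\to\chaug$ be the left Kan extension of the cosimplicial object
\[
[n]\mapsto \bigl(\Pi_1\Delta^n,\ \text{normalized chains on }\Delta^n\text{, with its augmentation}\bigr).
\]
Since $\Pi_1\Delta^n$ is contractible, we may regard this as the pair consisting of the indiscrete groupoid on $n+1$ objects and the constant chain complex $N_\bullet(\Delta^n)\to\mathbb{Z}[0]$. Concretely, $L(X)=(\Pi_1X,\ \text{augmented chains of the universal cover})$. Its right adjoint $R$ is a nerve-type functor. We then check three things:
\begin{itemize}
\item $L$ sends the generating cofibrations $\partial\Delta^n\hookrightarrow\Delta^n$ and the horn inclusions to cofibrations and acyclic cofibrations. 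This reduces to known facts about $\Pi_1$ and about normalized chains.
\item $L(\ast)$ is the unit $\mathbb{Z}[0]_\bullet$ over the terminal groupoid, so the colax unit map is an isomorphism.
\item $R$ is lax monoidal. This amounts to $L$ being colax monoidal, for which we use the Alexander--Whitney map $N(X\times Y)\to N(X)\otimes N(Y)$ together with $\Pi_1(X\times Y)\cong\Pi_1X\times\Pi_1Y$. The Alexander--Whitney map is compatible with augmentations, so the colax structure lives in $\chaug$.
\end{itemize}
By the Proposition following Theorem \ref{thm:naive-fibration-transfer}, the second point gives hypothesis (2) of that theorem.

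Second, the remaining hypotheses. We must check that $\chaug^\otimes$ satisfies the conditions of \cite[Theorem 1.10]{BergerMoerdijk2012}:
\begin{itemize}
\item it is a combinatorial monoidal model category satisfying the monoid axiom, by Theorem \ref{thm:chaug-monoidal-model-for-tensor};
\item its unit is cofibrant, since the unit is the terminal object of $\chaug$ and is cofibrant in $\chgpd$;
\item it is adequate, i.e.\ it admits a suitable interval and satisfies the remaining technical conditions (such as the existence of a Hopf interval).
\end{itemize}
For adequacy, we propose building the interval by applying $L$ to the simplicial interval. Hypothesis (1), for $\sset$-categories, is \cite{BergnerModelStructure}. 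Theorem \ref{thm:naive-fibration-transfer} then yields the naive fibration property for $\chaug^\otimes\text{-}\cat=\trackdgcataug$.

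The main obstacle is the lax monoidality of $R$ and its interaction with the groupoid part. The Alexander--Whitney map must be natural and coassociative strictly, not merely up to homotopy, because strict lax monoidality is what produces $\rcat$. The groupoid components must also be compatible with the chain-level map. Alexander--Whitney is strictly coassociative and counital, so this should work once the groupoid-indexed structure is handled levelwise over $\Pi_1$.

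If verifying adequacy for $\chaug$ proves messy, the fallback is to prove the naive fibration property directly, following Tabuada's argument for $\dg$-categories. Given a naive fibration between fibrant objects, we lift isomorphisms in $\pi_0$ to maps out of an explicit interval, namely the free augmented track $\dg$-category on an invertible $0$-cycle with contracting homotopies. We then use Lemma \ref{lem:single-interval-fibrant-objects} to conclude that the map is a fibration.
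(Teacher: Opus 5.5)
Your proposal is correct and is essentially the paper's argument. The paper also applies Theorem \ref{thm:naive-fibration-transfer} with $\V=\sset$, using the unit-preserving left Quillen functor $\nabla\circ\pi\colon\sset\to\chaug$ (the fundamental crossed complex followed by the Brown--Higgins--Sivera functor), which is, up to natural isomorphism, your functor $X\mapsto(\Pi_1X,\text{chains on the universal cover})$; the only differences are that the paper gets the lax monoidal structure on the right adjoint by citing Tonks (for $\pi$) and Brown--Higgins--Sivera (for $\nabla$) instead of writing down the Alexander--Whitney map directly, and that it takes the Berger--Moerdijk hypotheses on $\chaug$ for granted, whereas you explicitly flag them as something to check.
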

\begin{proof}
	There is a functor from $\sset$ to $\chaug$, given by composing the fundamental crossed complex functor $\pi$ (see e.g. \cite{Tonks_2003}) with (the slice category version of) the functor $\nabla$ of \cite[Section 7.4]{Brown_Higgins_Sivera_2011}. Since $\pi$ is weak Quillen monoidal and $\nabla$ is strong monoidal, the hypotheses of Theorem \ref{thm:naive-fibration-transfer} apply.
\end{proof}
	
\begin{lemma}\label{lem:all-0-cells-isomorphisms}
	Let $\C \in \trackchaincat$. Let $a : \mathbb{Z}[0]_\bullet \to \C[x,y]$ be a morphism of groupoid chain complexes such that the image of the unique object $\bullet$ is an isomorphism in $\pi_0\C$. 
	Then the adjoint to $a$ represents an isomorphism in the homotopy category of $\dgbaradjoint(\C)$.
\end{lemma}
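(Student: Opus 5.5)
The plan is to reduce the statement to an explicit computation of composition in $\dgbaradjoint(\C)$, and then to factor the given morphism as a product of two morphisms whose inverses can be written down directly.

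\emph{Unwinding the data.} The unit of $\chaug^\otimes$ is the terminal object $\mathbb{Z}[0]_\bullet \to \mathbb{Z}[0]_\bullet$. Morphisms from it into $F(X) = X \times \mathbb{Z}[0]_\bullet$ over $\mathbb{Z}[0]_\bullet$ are therefore the same thing as morphisms $\mathbb{Z}[0]_\bullet \to X$ in $\chgpd$. Such a morphism $a$ is a pair $(f,\alpha)$, where $f$ is an object of the groupoid part of $\C[x,y]$ and $\alpha \in \C(f)_0$ is a $0$-chain. Its homotopy class only depends on the isomorphism class of $f$ together with the class of $\alpha$ modulo boundaries, after transporting along $2$-cells.

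Next I would compute composition in $\dgbaradjoint(\C)$ on such representatives. The composition $\C[y,z] \times \C[x,y] \to \C[x,z]$ is a map out of a coproduct on the chain-complex side, so it is determined by the two ``whiskering'' maps $\alpha \mapsto \alpha\cdot g$ and $\beta \mapsto f\cdot\beta$. This gives
$$(f,\alpha)\circ(g,\beta) = (fg,\ \alpha\cdot g + f\cdot\beta).$$
Since the identity of $\C$ has chain part the zero object, the identity of $\dgbaradjoint(\C)$ is $(\mathrm{id},0)$. Unitality says that whiskering by identities is the identity map.

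\emph{The factorization.} By hypothesis $f$ is invertible in $\pi_0\C$. So choose $g$ and $2$-cells $h: fg \Rightarrow \mathrm{id}$ and $k: gf \Rightarrow \mathrm{id}$ in the relevant groupoids. Let $\alpha' \in \C(\mathrm{id}_y)_0$ be the transport of $\alpha\cdot g$ along $h$. I would then show the following three facts.
\begin{enumerate}
\item $(f,0)$ is an isomorphism in $\pi_0\dgbaradjoint(\C)$ with inverse $(g,0)$. Indeed, the composites $(fg,0)$ and $(gf,0)$ are homotopic to $(\mathrm{id},0)$ via the $2$-cells $h$ and $k$, since a morphism of the groupoid part acts on the zero chain trivially.
\item $(\mathrm{id},\alpha')$ is an isomorphism with inverse $(\mathrm{id},-\alpha')$. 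By the composition formula and unitality, the composite is $(\mathrm{id},\alpha'-\alpha') = (\mathrm{id},0)$.
\item $(\mathrm{id},\alpha')\circ(f,0) = (f,\alpha'\cdot f)$ is homotopic to $(f,\alpha)$.
\end{enumerate}
Granting these, $(f,\alpha)$ is a composite of two isomorphisms, hence an isomorphism.

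\emph{The main obstacle} is step (3). It requires that the composition maps, being morphisms in $\chgpd$, are equivariant for the groupoid actions. Concretely, transport along the whiskered $2$-cell $h\cdot f$ must commute with right whiskering by $f$. Then $\alpha'\cdot f$ is the transport of $\alpha\cdot gf$ along $h f: fgf \Rightarrow f$. By associativity this equals the transport of $\alpha\cdot(gf)$ along $f k$, which by functoriality of whiskering is $\alpha\cdot(-)$ applied to the transport along $k$, namely $\alpha\cdot\mathrm{id} = \alpha$. For this last equality to hold, $h$ and $k$ must satisfy the triangle identity $h f = f k$. I would arrange this by first choosing $h$ and $k$ to form an adjoint equivalence in the underlying track category. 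This is always possible in a $(2,1)$-category, and the resulting identity is what makes the transport computation close up.
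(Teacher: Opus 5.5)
Your proof is correct, but it is organized differently from the paper's.

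\textbf{The paper's route.} Writing $a=(p,\alpha)$ and choosing $q$ with $2$-cells $\ell_x\colon 1_x\to pq$ and $\ell_y\colon 1_y\to qp$, the paper proceeds in three steps:
\begin{enumerate}
\item It shows that whiskering by $pq$ and by $qp$ are isomorphisms, since they agree with the identity up to transport along $\ell_x,\ell_y$.
\item It deduces that the whisker-and-transport maps $\psi^p_x\colon\C[x,y](p)\to\C[x,x](1_x)$ and $\psi^q_x\colon\C[y,x](q)\to\C[x,x](1_x)$ are isomorphisms.
\item It writes down the inverse in one stroke as $b=(q,(\psi^q_x)^{-1}\psi^p_x(-\alpha))$. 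In other words, it solves for the chain component that makes the chain part of the composite cancel.
\end{enumerate}

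\textbf{Your route.} You instead factor $a$ as a pure $1$-cell morphism $(f,0)$ followed by a pure chain endomorphism $(\mathrm{id}_y,\alpha')$. Each factor has an evident two-sided inverse. Your inverse $(g,0)\circ(\mathrm{id}_y,-\alpha')=(g,-g\cdot\alpha')$ necessarily represents the same class as the paper's $b$, since inverses are unique.

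\textbf{Comparison.} Your route never needs any whiskering map to be invertible, and two-sidedness comes for free. The paper's formula, by contrast, is tailored so that one composite cancels on the nose, and the other is left to an ``unwinding up to the groupoid actions.'' The price of your route is concentrated in step (3). That step rests on the equivariance of the composition maps in the $2$-cell variables, together with associativity. This is the same input the paper uses to show its whiskering maps are isomorphisms.

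\textbf{A simplification.} You do not need to upgrade $(h,k)$ to an adjoint equivalence. Without the triangle identity, your computation gives $\alpha'\cdot f=\theta_*\alpha$ for the automorphism $\theta=(hf)\circ(fk)^{-1}$ of $f$. The pair $(f,\theta_*\alpha)$ is already homotopic to $(f,\alpha)$ via the $2$-cell $\theta$, because maps out of the unit that differ by transport along a $2$-cell are homotopic. Choosing an adjoint equivalence is harmless, just unnecessary.
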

\begin{proof}
	Denote the image of the unique object $\bullet $ by $p$, and the image of the generator of $\mathbb{Z}$ by $\alpha$. 
	The assumption that $p$ is an isomorphism in $\pi_0\C$ means that there is some object $q$ of $\Pi_1\C[y,x]$, along with morphisms
	$\ell_x: 1_x \to pq$ and $\ell_y: 1_y \to qp$. 
	Define the ``whiskering by $pq$'' map to be
	$$\C[a,x](r) \to \C[a,x](rpq) \quad \quad \alpha \mapsto \circ(\alpha, 0_{pq}) = \circ (\alpha, 0_p,0_q)$$
	Then, up to $\ell_x$, this is the same as the map $\alpha \mapsto \circ(\alpha, 0_{1_x}) = \alpha$, so it follows that this is an isomorphism.
	Similarly define a ``whiskering by $qp$" map.
	Using the fact that these are isomorphisms, and associativity of the composition, we can see that the maps $\psi^p_{x}: \C[x,y](p) \to \C[x,x](1_x)$ and $\psi^q_x: \C[y,x](q) \to \C[x,x](1_x)$ are both isomorphisms. 
	Now consider the map $b: \mathbb{Z}[0]_\bullet \to \C[y,x]$ given by $(q, {\psi^q_x}^{-1} \circ \psi^p_x (-\alpha))$. 
	Unwinding the definition of the compositions in $\dgbaradjoint(\C)$, we see that the adjoint to $b$ is a an inverse to $a$ up to the actions of the groupoids $\Pi_1\dgbaradjoint(\C)[x,x]$ and $\Pi_1 \dgbaradjoint(\C)[y,y]$, and hence represents an inverse in the homotopy category. 
\end{proof}

\begin{theorem}\label{thm:dgbaradjoint-preserves-naive-fibrations}
	The functor $\dgbaradjoint: \trackchaincat \to \trackdgcataug$ preserves naive fibrations.
\end{theorem}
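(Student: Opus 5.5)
Since $\dgbaradjoint$ is computed by applying the right adjoint $F = - \times \mathbb{Z}[0]_\bullet$ to each hom-object, and $F$ is right Quillen (it preserves fibrations because the product of a fibration with an object is a fibration in $\chgpd$), the first half is immediate. If $f: \C \to \D$ is a local fibration in $\trackchaincat$, then $\dgbaradjoint(f)$ is a local fibration in $\trackdgcataug$. All of the remaining work goes into showing that $\pi_0 \dgbaradjoint(f)$ is an isofibration whenever $\pi_0 f$ is one.

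First I would identify $\pi_0$ on both sides. A morphism $x \to y$ in $\pi_0\dgbaradjoint(\C)$ is a map $\mathbb{Z}[0]_\bullet \to \C[x,y] \times \mathbb{Z}[0]_\bullet$ over $\mathbb{Z}[0]_\bullet$ in $\Ho(\chaug)$, taken up to homotopy. By adjunction this is the same as a map $\mathbb{Z}[0]_\bullet \to \C[x,y]$ in $\Ho(\chgpd)$. Concretely it is a component $p$ of $\Pi_1\C[x,y]$ (an object up to isomorphism) together with a degree-$0$ homology class $\alpha$ in the chain complex at $p$, taken modulo the groupoid action. There is also a forgetful functor $\pi_0\dgbaradjoint(\C) \to \pi_0\C$ recording only $p$. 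This functor is not faithful, which is why $\dgbaradjoint$ is not simply conjugate to the identity and why Lemma \ref{lem:all-0-cells-isomorphisms} is needed.

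Next comes the lifting problem. Take an object $c \in \C$ and an isomorphism $u: f(c) \to d$ in $\pi_0\dgbaradjoint(\D)$, represented by a pair $(p,\alpha)$. Since $u$ is invertible, its image $p$ in $\pi_0\D$ is an isomorphism. Because $\pi_0 f$ is an isofibration, there is an isomorphism $\tilde p: c \to c'$ in $\pi_0\C$ with $f(c') = d$ and $f(\tilde p) = p$. I would take the lift on the nose at the level of $\Pi_1$-objects; this is possible because $f$ is a local fibration, so $\Pi_1 f$ is an isofibration of groupoids and an isomorphism $f(\tilde p)\cong p$ can be absorbed. I then need to lift the class $\alpha$. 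Local fibrations in $\chgpd$ are surjective on chain complexes in positive degrees; in degree $0$ I would use that the relevant map is a surjection of chain complexes over the groupoid. So choose a $0$-cycle $\tilde\alpha$ in $\C[c,c'](\tilde p)$ lifting a representative of $\alpha$. To arrange that it is a cycle, lift a representative cycle and use surjectivity on $1$-chains to correct boundaries. The main obstacle is this lift at the chain level: in nonnegatively graded complexes a degree-$0$ element is automatically a cycle, so the lift should be straightforward, but I would double-check the homotopy-class bookkeeping against the groupoid action. The pair $(\tilde p,\tilde\alpha)$ then gives a map $a: \mathbb{Z}[0]_\bullet \to \C[c,c']$ whose object lies over the isomorphism $\tilde p$ of $\pi_0\C$.

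Finally, Lemma \ref{lem:all-0-cells-isomorphisms} shows that the adjoint of $a$ is an isomorphism in $\pi_0\dgbaradjoint(\C)$, and by construction $\pi_0\dgbaradjoint(f)$ sends it to $u$. This verifies the isofibration condition, so $\dgbaradjoint(f)$ is a naive fibration.
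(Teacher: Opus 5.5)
Your route is the paper's route. Local fibrations are preserved because $\dgbaradjoint$ applies $F$ hom-wise. An isomorphism $u$ in $\pi_0\dgbaradjoint(\D)$ is represented by a pair $(p,\alpha)$ with $p$ invertible in $\pi_0\D$. You lift $p$ using that $\pi_0 f$ is an isofibration, and you are more careful than the paper here: you use that $\Pi_1 f$ is an isofibration on hom-groupoids to make the lift strict. You then lift $\alpha$ and conclude with Lemma \ref{lem:all-0-cells-isomorphisms}.

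The one step with real content is the lift of $\alpha$, and as written your justification for it does not work. You say, correctly for the projective structure, that local fibrations are surjective only in positive degrees, and then assert surjectivity in degree $0$ anyway. Your proposed repair (lift a representative and correct by boundaries using surjectivity on $1$-chains) addresses a non-issue, since every $0$-chain is a cycle. It does not address the actual issue, which is whether the class $\alpha$ is hit at all, i.e.\ surjectivity on $H_0$ at $\tilde p$. Surjectivity in positive degrees does not give this: think of $0 \to \mathbb{Z}[0]$.

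The paper's proof passes this point by reading fibrations of $\chgpd$ as having chain part a surjection in all degrees (``(i.e.\ a surjection)'' in its description of the model structure). Under that reading your proof is complete and coincides with the paper's. But that reading is not what the stated generating acyclic cofibrations $\mathbf{0} \hookrightarrow D^n$, $n \geq 1$, produce, and with the projective fibrations the statement itself fails. Here is a counterexample. Let $\C$ be the one-object track chain-category whose endomorphism object is the terminal object $(\ast, 0)$. Let $\D$ be the one-object track chain-category with endomorphism object $(\ast, \mathbb{Z}[0])$; its unique monoid structure for $\times$ is addition. Let $f \colon \C \to \D$ be the unique functor.
\begin{itemize}
\item $f$ is a local fibration, since there is nothing to check in positive degrees.
\item $\pi_0 f$ is an isomorphism of trivial categories, so $f$ is a naive fibration.
\item $\dgbaradjoint(f)$ is the map of augmented DGAs $\mathbb{Z} \to \mathbb{Z}[\epsilon]/\epsilon^2$.
\item $\pi_0\dgbaradjoint(f)$ is the inclusion of the trivial group into the group $\{1+n\epsilon\} \cong \mathbb{Z}$, which is not an isofibration: $1+\epsilon$ has no lift.
\end{itemize}
So degree-$0$ surjectivity of local fibrations is a genuine extra input on which both your argument and the paper's depend, not a bookkeeping detail to be checked afterwards.
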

\begin{proof}
	Let $f: \C \to \D$ be a naive fibration in $\trackchaincat$. Since $\dgbaradjoint$ clear preserves local fibrations, it suffices to show that $\dgbaradjoint(f)$ induces an isofibration of homotopy categories. 
	Let $a: \mathbb{Z}[0]_\bullet \to \D[x,y]$ descend to an isomorphism in $\pi_0 \dgbaradjoint(\D)$. 
	We wish to lift this isomorphism to $\pi_0\dgbaradjoint(\C)$, given already a lift $\widehat{x}$ for the object $x \in \pi_0 \dgbaradjoint(\C)$.
	Now, the data of $a$ consists of:
	\begin{itemize}
		\item an object $p$ of $\Pi_1(\D[x,y])$
		\item an element $\alpha$ of the group $\D[x,y](p)_0$
	\end{itemize}
	Since $a$ descends to an isomorphism, in particular $p$ must be a homotopy equivalence in the underlying track-category. 
	Since $f$ is a naive fibration, it is a fibration of underlying track-categories, and it follows that we can lift $p$ to an object $\hat{p} \in \Pi_1\C[\hat{x},\hat{y}]$, for some lift $\hat{y}$ of $y$.
	Further,
	since $f$ is a local fibration, we can lift $\alpha$ to some element $\hat{\alpha} \in \C[\hat{x}, \hat{y}](\hat{p})$. Now $\hat{a} = (\hat{p}, \hat{\alpha})$ represents an an isomorphism in $\pi_0\dgbaradjoint(\C)$, by Lemma \ref{lem:all-0-cells-isomorphisms}, and it clearly lifts $a$, as desired. 
\end{proof}

\begin{theorem}\label{thm:dg-bar-construction-is-quillen}
	The adjunction
	\adjunctiondiagram{\dgbarconstruction}{\trackdgcataug}{\trackchaincat}{\dgbaradjoint}
	is a Quillen adjunction.
\end{theorem}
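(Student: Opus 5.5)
The plan is to apply Theorem \ref{thm:naive-fibration-transfer} to the adjunction $U : \chaug^\otimes \rightleftarrows \chgpd^\times : F$. In that theorem, $\V = \chaug^\otimes$ and $\W = \chgpd^\times$, so that $\lcat = \dgbarconstruction$ and $\rcat = \dgbaradjoint$. I will verify the hypotheses in turn.

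First, $U \dashv F$ is a Quillen adjunction. The model structure on $\chaug$ is the slice model structure over $\mathbb{Z}[0]_\bullet$, in which the cofibrations and weak equivalences are detected on underlying maps in $\chgpd$. Hence $U$ preserves cofibrations and acyclic cofibrations. Dually, $F = - \times \mathbb{Z}[0]_\bullet$ preserves fibrations and acyclic fibrations, because these classes are stable under products with a fixed object. As noted in the paper, $F$ is lax monoidal.

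Second, I check that $\W = \chgpd^\times$ satisfies the hypotheses of \cite[Theorem 1.10]{BergerMoerdijk2012}. By Theorem \ref{thm:chgpd-monoidal-model-for-cartesian}, $\chgpd^\times$ is a cofibrantly generated cartesian monoidal model category satisfying the monoid axiom. Its unit is the terminal object, which is $(\ast, 0)$ and is cofibrant. The model structure should be combinatorial, since $\chgpd$ is locally presentable and cofibrantly generated. What remains is adequacy, i.e.\ the existence of a good interval and the coherence conditions. I expect these to follow from the cartesian structure together with the fact that the groupoid part already contains the walking isomorphism $\I$. I would check them directly, or by comparison with $\gpd$, using the inclusion $\gpd \to \chgpd$ with zero chain complex.

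Third, I verify conditions (1) and (2) of Theorem \ref{thm:naive-fibration-transfer}. Condition (1) asks that the Dwyer-Kan model structure on $\trackdgcataug$ exist and have the naive fibration property. Existence follows from \cite{Muro2012} together with Theorem \ref{thm:chaug-monoidal-model-for-tensor}, since $\chaug$ satisfies the monoid axiom and has cofibrant unit. The naive fibration property is Theorem \ref{thm:track-dg-cat-naive-fibration}. Condition (2) asks that $\dgbaradjoint$ preserve naive fibrations between fibrant objects, which is a special case of Theorem \ref{thm:dgbaradjoint-preserves-naive-fibrations}.

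With all hypotheses in place, Theorem \ref{thm:naive-fibration-transfer} gives that $\dgbarconstruction \dashv \dgbaradjoint$ is a Quillen adjunction. As a bonus, it also shows that $\trackchaincat$ has the naive fibration property.

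The main obstacle is the second step: confirming that $\chgpd^\times$ meets all of Berger–Moerdijk's hypotheses. In particular, their adequacy and interval conditions are not among the properties established so far in the paper, and I would expect this to require the most care.
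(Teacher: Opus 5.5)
Your proposal is correct and is essentially the paper's proof. The paper simply applies Theorem~\ref{thm:naive-fibration-transfer} to $U \dashv F$, taking condition (1) from Theorem~\ref{thm:track-dg-cat-naive-fibration} and condition (2) from Theorem~\ref{thm:dgbaradjoint-preserves-naive-fibrations}. Your extra checks are left implicit in the paper: that $U \dashv F$ is Quillen, and that $\chgpd^\times$ meets the Berger--Moerdijk hypotheses. So the step you flag as the main obstacle is one the paper takes for granted; it does not resolve it by any argument.
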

\begin{proof}
	We apply Theorem \ref{thm:naive-fibration-transfer}. 
	The hypotheses are satisfied as a result of Theorems \ref{thm:dgbaradjoint-preserves-naive-fibrations} and \ref{thm:track-dg-cat-naive-fibration}.
\end{proof}

\subsection{Underlying track-categories}
We observe that both functors in the adjunction $\dgbarconstruction \dashv \dgbaradjoint$ preserve the groupoid part of each hom-object:
\begin{definition}
	Let $\Pi_1: \ch^\times \to \gpd$ be the functor which forgets the chain complex, and similarly for $\Pi_1: \chaug^\otimes \to \gpd$. 
	These functors are strong monoidal left adjoints; denote by $\catfunctor{\Pi_1}$ both of the resulting functors 
	$$\trackdgcataug \to \gpdcat \quad \text{ and } \quad \trackchaincat \to \gpdcat$$
\end{definition}
\begin{theorem}
	The following diagrams commute:
	\begin{center}
	\begin{tikzcd}[sep=small]
		\trackdgcataug 
			\ar[rr, "\dgbarconstruction"] 
			\ar[rd, "\catfunctor{\Pi_1}" swap]
		&&
		\trackchaincat \ar[ld, "\catfunctor{\Pi_1}"]
		\\
		& 
		\gpdcat
	\end{tikzcd}
	\begin{tikzcd}[sep=small]
		\trackdgcataug 
			\ar[rd, "\catfunctor{\Pi_1}" swap]
		&&
		\trackchaincat 
			\ar[ld, "\catfunctor{\Pi_1}"]
			\ar[ll, "\dgbaradjoint " swap]
		\\
		& 
		\gpdcat 
	\end{tikzcd}
	\end{center}
\end{theorem}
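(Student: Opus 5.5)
The plan is to prove commutativity of each triangle at the level of adjunctions, using uniqueness of adjoints for one triangle and a direct computation for the other. Write $\Pi_1^{\otimes}: \chaug^\otimes \to \gpd$ and $\Pi_1^{\times}: \chgpd^\times \to \gpd$ for the two forgetful functors. At the underlying (non-enriched) level we have strict equalities
$$\Pi_1^\times \circ U = \Pi_1^\otimes \qquad \text{and} \qquad \Pi_1^\otimes \circ F = \Pi_1^\times.$$
The first holds because $U$ only forgets the augmentation. The second holds because $F(G,C) = (G,C) \times \mathbb{Z}[0]_\bullet$ has groupoid part $G \times \ast = G$.

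I will first check that these equalities are compatible with the monoidal structures. The functor $\Pi_1^\times$ is strong (indeed strict) monoidal, since $\Pi_1((G,A)\times(H,B)) = G\times H$. Likewise $\Pi_1^\otimes$ is strict monoidal, since the groupoid part of a tensor product is the cartesian product of the groupoid parts. Next I check that the lax structure maps of $F$, namely $F(X)\times F(Y) \to F(X \otimes Y)$ and $\iw \to F(\I)$, become identities after applying $\Pi_1$. On groupoid parts, both sides of the first map are $G\times H$ and both sides of the second map are $\ast$, and the structure maps are built from projections and diagonals, so they reduce to identities. Consequently, applying $\Pi_1$ hom-wise commutes strictly with $\dgbaradjoint$: for $\C \in \trackchaincat$ we get $\catfunctor{\Pi_1}\dgbaradjoint(\C) = \catfunctor{\Pi_1}(\C)$ as track categories, with the same composition. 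This establishes the second triangle.

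For the first triangle I will argue through right adjoints rather than computing $\dgbarconstruction$ directly, since the left adjoint is hard to compute. Each $\Pi_1$ also has a right adjoint on the underlying categories: a groupoid $G$ is sent to $(G, 0)$ with the zero chain complex, augmented by the zero map in the $\otimes$ case. In the $\otimes$ setting the zero complex is not the unit, so I must check whether this right adjoint is lax monoidal. It is: $(G,0)\otimes(H,0) = (G\times H, 0)$, and the unit map $\mathbb{Z}[0]_\bullet \to (\ast,0)$ is the zero map, which is compatible with the augmentations. In the cartesian case the right adjoint preserves products strictly. So by the change-of-enrichment construction of Section~\ref{section:background}, each $\catfunctor{\Pi_1}$ has a right adjoint $Z^{\mathrm{cat}}$ obtained by applying $G \mapsto (G,0)$ locally.

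To show $\catfunctor{\Pi_1}\circ\dgbarconstruction \cong \catfunctor{\Pi_1}$, it then suffices, by uniqueness of adjoints, to show $\dgbaradjoint \circ Z^{\mathrm{cat}}_\times \cong Z^{\mathrm{cat}}_\otimes$. Locally this is the identity
$$F(G,0) = (G, 0\oplus \mathbb{Z}) = (G,\mathbb{Z}[0]),$$
which is wrong as it stands: applying $F$ produces $\mathbb{Z}$ rather than $0$. The computation fails, so this is the main obstacle I expect.

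My fallback is to use the cotensor adjoints instead. Each object-preserving functor commutes with the object-set functor. On a fixed object set $S$, the functor $\catfunctor{\Pi_1}$ is a left adjoint between categories of monoids in graphs. I will verify directly that $\dgbarconstruction$ does not alter groupoid parts: for free categories on generating graphs, $\dgbarconstruction$ sends the free $\V$-category to the free $\W$-category on $U$ of the graph, and $U$ preserves groupoid parts. Since $\dgbarconstruction$, $\catfunctor{\Pi_1}$, and the free functors all preserve colimits, the identity extends from free categories to all categories via canonical coequalizer presentations $\T\T\C \rightrightarrows \T\C \to \C$. The point to verify carefully is that the coequalizer in $\trackchaincat$ computes groupoid parts as the coequalizer of groupoid parts, which holds because $\Pi_1$ is a left adjoint.
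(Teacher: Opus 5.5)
Your argument for the second triangle is correct and matches the paper's: $\dgbaradjoint$ applies $F$ hom-wise, $\Pi_1\circ F=\Pi_1$, and the lax structure maps of $F$ are identities on groupoid parts.

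The first triangle is not established. The cause is a misidentified adjoint, not a real obstruction. In the slice category $\chaug$ over $\mathbb{Z}[0]_\bullet$, the object $(G,0)$ with zero augmentation does not represent $\Hom_{\gpd}(\Pi_1(-),G)$. A morphism $(G',C,\epsilon)\to(G,0,0)$ over $\mathbb{Z}[0]_\bullet$ forces $\epsilon=0$, so there are none when the augmentation is nonzero. For the same reason, your ``unit map'' $\mathbb{Z}[0]_\bullet\to(\ast,0)$ is not a morphism over $\mathbb{Z}[0]_\bullet$, since that would require $\mathrm{id}_{\mathbb{Z}}=0$.

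The correct right adjoint of $\Pi_1\colon\chaug\to\gpd$ is $G\mapsto(G,\mathbb{Z}[0],\mathrm{id})$, the constant complex $\mathbb{Z}$ in degree $0$ with identity augmentation. A map $(G',C,\epsilon)\to(G,\mathbb{Z}[0],\mathrm{id})$ is a functor $G'\to G$, and its chain component is forced to be $\epsilon$. This functor is strong monoidal ($\mathbb{Z}\otimes\mathbb{Z}\cong\mathbb{Z}$), so applying it locally gives the right adjoint $Z^{\mathrm{cat}}_\otimes$ of $\catfunctor{\Pi_1}$ on $\trackdgcataug$.

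With this adjoint, the computation you discarded is exactly what is needed. $F(G,0)=(G,0\oplus\mathbb{Z}[0])$ with projection augmentation is $(G,\mathbb{Z}[0],\mathrm{id})$. The lax structure of $F$ at $(G,0)$ and $(H,0)$ is the canonical isomorphism $\mathbb{Z}\otimes\mathbb{Z}\cong\mathbb{Z}$, which matches that of $Z_\otimes$, and the unit maps agree. Hence $\dgbaradjoint\circ Z^{\mathrm{cat}}_\times\cong Z^{\mathrm{cat}}_\otimes$, and uniqueness of left adjoints gives the first triangle up to natural isomorphism. This is precisely the paper's one-line proof.

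Your fallback does not close the gap as written.
\begin{itemize}
\item Isomorphisms $\catfunctor{\Pi_1}\dgbarconstruction(\mathrm{Free}\,X)\cong\catfunctor{\Pi_1}(\mathrm{Free}\,X)$ that are natural only in maps of graphs do not pass to the canonical coequalizer $\mathrm{Free}\,U\,\mathrm{Free}\,U\mathcal{C}\rightrightarrows\mathrm{Free}\,U\mathcal{C}$. One of the two parallel arrows is the counit at a free object (the monad multiplication), which is not the image of a graph map. You give no reason your isomorphism commutes with it.
\item Agreement of two left adjoints after precomposing with $\mathrm{Free}$ only says their right adjoints have isomorphic underlying graphs. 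It says nothing about their composition laws.
\item The colimit preservation you need for $\catfunctor{\Pi_1}$ does not follow from $\Pi_1$ being a left adjoint on hom-objects, because colimits of enriched categories are not computed hom-wise. It requires $\catfunctor{\Pi_1}$ itself to be a left adjoint, and on the augmented side that again rests on the correct right adjoint above.
\item The fallback only handles a fixed object set.
\end{itemize}
Once the adjoint is corrected, the fallback is unnecessary.
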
	
\begin{proof}
	The second diagram is easy to verify, since all three functors $\dgbaradjoint$, $\catfunctor{\Pi_1}$, and $\catfunctor{\Pi_1}$ are given by applying functors locally. 
	For the second diagram, it suffices to verify the corresponding diagram of right adjoints commutes, which is easy to do.
\end{proof}

\section{The square-zero adjunction}
\label{section:bar-construction}
By viewing chain complexes as elements of $\ch$, with groupoid part the terminal groupoid, we can view (nonnegatively homologically graded) $\dg$-categories as track $\dg$-categories.
We can thus apply $\dgbarconstruction$ to obtain a track chain-category. Since $\dgbarconstruction$ preserves the groupoid part of the hom-objects, the resulting track chain-category will be equivalently a category enriched in $\chbasic$, which by Theorem \ref{thm:cocartesian-enrichment-trivial} is equivalently an ordinary chain complex. 
The goal of this section is to prove that for an augmented $\dg$-category $\C$, this recovers the noncommutative $1$-forms/square-zero extension adjunction of \cite[Section 5]{Tabuada2009}.
\subsection{Square-zero extensions of $\dg$-categories} 
We recall briefly some of the main definitions of \cite{Tabuada2009}.
\begin{definition}
	Let $\A$ be a $\dg$-category. Then an \emph{$\A$-bimodule} $M$ is for every $x,y \in \ob \C$ a chain complex $M(x,y)$, natural in $x$ and $y$.
	The category of $\A$-bimodules is denoted $\A-\mathrm{bimod}$.
\end{definition}
More formally, an $\A$-bimodule is a functor $\A \otimes \A^{\mathrm{op}} \to \chbasic$.
The only example we will be concerned about here is the following:
\begin{example}
	Let $\A$ be the $\dg$-category with one object, and unique mapping complex $\mathbb{Z}[0]$. Then an $\A$ bimodule is the same thing as a chain complex; that is, there is an equivalence of categories
	$$\A-\mathrm{bimod} \simeq \chbasic$$
	More generally, for $S$ any set, there is a $\dg$-category $\mathbb{Z}[0]_S$ with object set $S$ where every hom-object is $\mathbb{Z}[0]$, with composition maps given by the algebra structure of $\mathbb{Z}[0]$. 
	Then a $\mathbb{Z}[0]_S$-bimodule is a functor from the contractible groupoid on the object set $S$ into $\chbasic$, and so we have a similar equivalence of categories 
	$$\mathbb{Z}[0]_S-\mathrm{bimod} \simeq \chbasic$$
\end{example}
Recall the following adjunction from \cite[Section 5]{Tabuada2009}:
\begin{definition}
	Let $S$ be a set. Denote by $\dg\text{-}\mathrm{cat}_S$ the category of $\dg$-categories with fixed object set $S$. 
	Let $\A \in \dg\text{-}\mathrm{cat}_S$, and denote by $\dg\text{-}\mathrm{cat}_S / \A$ the slice category.
	There is an adjunction 
	\begin{center}
	\begin{tikzcd}[sep = large]
		\dg\text{-}\mathrm{cat}_S / \A
			\ar[rr, shift left = 1, bend left = 5, "\Omega^1_{nc}(-)"]
		&&
		\A-\mathrm{bimod}
			\ar[ll, shift left = 1, bend left = 5, "A \ltimes -"]
	\end{tikzcd}
	\end{center}
	In which the right adjoint $\A \ltimes -$ takes a bimodule $M$ to the square-zero extension of $\A$ by $M$, i.e. $\A \ltimes M$ is the $\dg$-category with object set $S$ and $\A \ltimes M [x,y] := \A[x,y] \oplus M(x,y)$.
	The composition operation is therefore a map
	$$\left(\A[x,y] \oplus M(x,y)\right) \otimes \left( \A[y,z] \oplus M(y,z) \right) 
	\to 	\left(\A[x,z] \oplus M(x,z)\right)  
	$$
	which is equivalently a map 
	\begin{center}
	\begin{tikzcd}
		\left(\A[x,y] \otimes \A[y,z]\right) \oplus \left(\A[x,y] \otimes M(y,z) \right) \oplus \left(M(x,y) \otimes \A[y,z]\right) \oplus \left(M(x,y) \otimes M(y,z) \right)
			\ar[d]
		\\
		\A[x,z] \oplus M(x,z)
	\end{tikzcd}
	\end{center}
	Which is defined to be composition in $\A$ in the first component, the bimodule action of $\A$ on $M$ in the second and third component, and $0$ in the last component.
\end{definition}
In the special case that $\A = \mathbb{Z}[0]_S$, then via the equivalence of categories $\A-\mathrm{bimod} \simeq \chbasic$, this gives us an adjunction
\begin{center}
	\begin{tikzcd}[sep = large]
		\dg\text{-}\mathrm{cat}_S / \mathbb{Z}[0]_S
			\ar[rr, shift left = 1, bend left = 5, "\Omega^1_{nc}(-)"]
		&&
		\chbasic
			\ar[ll, shift left = 1, bend left = 5, "A \ltimes -"]
	\end{tikzcd}
\end{center}
\begin{definition}
	Denote by $\dg_*\text{-}\mathrm{cat}$ the category of categories enriched in augmented chain complexes (with respect to $\otimes$). 
	Similarly, for $S$ a set, denote by $\dg_*\text{-}\mathrm{cat}_S$ the subcategory of $\dg_*\text{-}\mathrm{cat}$ with fixed object set $S$ (and morphisms only those functors which are identity-on-objects).
\end{definition}
It is straightforward to check that $\dg_*\text{-}\mathrm{cat}$ is equivalent to the slice category $\dg\text{-}\mathrm{cat}/\mathbb{Z}[0]_{\{0\}}$, and that similarly $\dg_*\text{-}\mathrm{cat}$ is equivalent to $\dg\text{-}\mathrm{cat}_S/\mathbb{Z}[0]_S$.

\subsection{$\dg$-categories and track $\dg$-categories}
\newcommand{\bchbasic}{\mathcal{B}\chbasic}

\begin{definition}
	Let $T \in \gpdcat$. 
	Define a category $[T, \bchbasic]^\otimes$, in which:
	\begin{itemize}
		\item The objects are augmented track $\dg$-categories with track category part exactly $T$, and 
		\item The morphisms are morphisms of track $\dg$-categories which are identity-on-objects and identity on groupoid parts.
	\end{itemize}
\end{definition}

\begin{definition}
	Let $T \in \gpdcat$. 
	Define a category $[T, \bchbasic]^\times$, in which:
	\begin{itemize}
		\item The objects are track chain-categories with track category part exactly $T$, and 
		\item The morphisms are morphisms of track $\dg$-categories which are identity-on-objects and identity on groupoid parts.
	\end{itemize}
\end{definition}
The notation $[T, \bchbasic]$ comes from thinking of this as a category of lax functors between two $2$-categories: the $(2,1) $-category $T$, and the classifying $2$-category $\bchbasic$ of the monoidal category $\chbasic$. 
We imagine that one could recover the model categories on $\trackdgcataug$ and $\trackchaincat$ by viewing them as Grothendieck constructions $\int_{\gpdcat}[-, \bchbasic]$, but we will not explore that here.
\begin{definition}\label{dfn:K(S)}
	Let $S \in \set$. 
	Define $K(S) \in \gpdcat$ to be the track-category with object set $S$, and every hom-object the terminal groupoid. Define $p: K(S)^\cof \to K(S)$  to be a cofibrant replacement of $K(S)$ (in $\gpdcat$). 
\end{definition}
Clearly, $[K(S), \bchbasic]^\otimes$ is simply the category of ordinary $\dg$-categories with object set $S$.
By Theorem \ref{thm:cocartesian-enrichment-trivial}, $[K(S), \bchbasic]^\times$ is equivalent to the category $\chbasic$.
\par 
Since both functors of the adjoint pair $\dgbarconstruction \dashv \dgbaradjoint$ are identity on underlying track-categories, they restrict to give an adjunction 
$$
\left(\dgbarconstruction\right)_T : [T, \bchbasic]^\otimes \rightleftarrows [T, \bchbasic]^\times : \left(\dgbaradjoint\right)_T
$$
for a fixed track-category $T$. In particular, for $T = K(S)$, we get an adjunction which fits into a diagram
\begin{equation}\label{diagram:functors-dg-bar}
\begin{tikzcd}[sep = large]
	\trackdgcataug
		\ar[r, "\dgbarconstruction"]
	&
	\trackchaincat
	\\
	{[K(S), \bchbasic]}^\otimes 
		\ar[u, hook]
		\ar[r, "\left(\dgbarconstruction\right)_{K(S)}"]
	&
	{[K(S), \bchbasic]}^\times
		\ar[u, hook]
	\\
	\dg\text{-}\cat_S
		\ar[r, "\Omega^1_{nc}(-)"]
		\ar[u, "\sim" labl]
	&
	\chbasic
		\ar[u, "\sim" labl]
\end{tikzcd}
\end{equation} 
\begin{theorem}\label{thm:noncommutative-forms-is-dg-bar}
	Diagram \eqref{diagram:functors-dg-bar} commutes.
\end{theorem}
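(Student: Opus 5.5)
The upper square of Diagram \eqref{diagram:functors-dg-bar} commutes by construction. Indeed, $(\dgbarconstruction)_{K(S)}$ is defined as the restriction of $\dgbarconstruction$, and this restriction makes sense because $\dgbarconstruction$ preserves underlying track-categories. So the content is the lower square. There both horizontal functors are left adjoints: $\Omega^1_{nc}$ is left adjoint to $\mathbb{Z}[0]_S \ltimes -$, and $(\dgbarconstruction)_{K(S)}$ is left adjoint to $(\dgbaradjoint)_{K(S)}$. The vertical functors are equivalences. I would therefore prove that the corresponding square of right adjoints commutes up to natural isomorphism, and conclude for the left adjoints by uniqueness of adjoints. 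Concretely, I will show that for a chain complex $M$, regarded via Theorem \ref{thm:cocartesian-enrichment-trivial} as a track chain-category $\C_M$ over $K(S)$, the augmented track $\dg$-category $\dgbaradjoint(\C_M)$ is naturally isomorphic to $\mathbb{Z}[0]_S \ltimes M$, regarded as an augmented $\dg$-category over $K(S)$.

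Step 1: make the equivalence $\chbasic \simeq [K(S),\bchbasic]^\times$ explicit. By Theorem \ref{thm:cocartesian-enrichment-trivial}, the track chain-category $\C_M$ has $\C_M[x,y] = (\ast, M)$ for all $x,y$. Its composition $\C_M[x,y]\times\C_M[y,z]\to\C_M[x,z]$ is the map $M\oplus M\to M$ given by the codiagonal $(m,m')\mapsto m+m'$, up to the canonical isomorphisms between the hom objects. Strictly speaking, the identification in Theorem \ref{thm:cocartesian-enrichment-trivial} only gives a compatible system of isomorphisms, so I would fix the equivalence using the inverse direction $M\mapsto\C_M$. Equally, I fix the equivalence $\dg\text{-}\cat_S/\mathbb{Z}[0]_S\simeq[K(S),\bchbasic]^\otimes$ as viewing each hom complex as having the trivial groupoid.

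Step 2: compute $\dgbaradjoint(\C_M)$ locally. The right adjoint $F$ sends $(\ast,M)$ to $(\ast,M)\times\mathbb{Z}[0]_\bullet=(\ast,M\oplus\mathbb{Z})$, augmented by the projection. This matches the hom complexes $\mathbb{Z}[0]_S[x,y]\oplus M$ of the square-zero extension, with the same augmentation. The composition in $\dgbaradjoint(\C_M)$ is the lax structure map of $F$ followed by $F(\circ)$. Since the unit of $\chaug^\otimes$ is terminal, the lax structure map $F(A)\otimes F(B)\to F(A\times B)$ is the pairing of the two maps $F(A)\otimes F(B)\to F(A)\otimes\mathbb{Z}\cong F(A)$ and $F(A)\otimes F(B)\to F(B)$, each obtained by augmenting the other factor. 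I will expand $(M\oplus\mathbb{Z})\otimes(M\oplus\mathbb{Z})$ into its four summands and track each. The $\mathbb{Z}\otimes\mathbb{Z}$ summand goes to the unit, and the $M\otimes\mathbb{Z}$ and $\mathbb{Z}\otimes M$ summands go to $M$ by the identity. The $M\otimes M$ summand is killed, because each factor of $M$ is augmented to zero before the codiagonal. This is exactly the composition rule of $\mathbb{Z}[0]_S\ltimes M$ described in the definition above: multiplication in $\mathbb{Z}[0]_S$, the trivial bimodule action, and $0$ on $M\otimes M$.

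Step 3: check naturality in $M$ and compatibility of units, both of which are immediate from the local formula. Then invoke uniqueness of left adjoints to get a natural isomorphism $(\dgbarconstruction)_{K(S)}\circ\iota\cong\iota'\circ\Omega^1_{nc}$. One also needs that the restricted adjunction $(\dgbarconstruction)_{K(S)}\dashv(\dgbaradjoint)_{K(S)}$ really is an adjunction on the fixed-track-category subcategories. This holds because both functors preserve $\catfunctor{\Pi_1}$ by the preceding theorem, and the hom-set bijection of $\dgbarconstruction \dashv \dgbaradjoint$ restricts to morphisms that are the identity on track parts. The main obstacle I expect is Step 2's bookkeeping of the lax structure map. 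Specifically, I must verify that the codiagonal composition in $\C_M$, under the canonical isomorphisms of Theorem \ref{thm:cocartesian-enrichment-trivial}, yields the trivial (identity) bimodule actions and not some twisted version. This is also where signs or the choice of identification of hom objects could go wrong.
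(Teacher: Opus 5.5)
Your proposal is correct and follows the same route as the paper. The top square commutes by construction, and the bottom square is checked on right adjoints, where $\dgbaradjoint$ applies $\mathbb{Z}[0]\oplus -$ locally; your explicit unpacking of the lax structure map of $F$ (killing the $M\otimes M$ summand and producing the identity bimodule actions) is precisely the computation the paper leaves as ``fairly straightforward.''
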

\begin{proof}
	The top square clearly commutes, so it suffices to show the bottom square commutes. It is equivalent to show that the diagram of right adjoints
	\begin{equation}
	\begin{tikzcd}[sep = large]
	{[K(S), \bchbasic]}^\otimes 
	&
	{[K(S), \bchbasic]}^\times
		\ar[l, "\dgbaradjoint_T" swap]
	\\
	\dg_S\text{-}\cat
		\ar[u, "\sim" labl]
	&
	\chbasic
		\ar[l, "{ \mathbb{Z}[0] \ltimes -}"]
		\ar[u, "\sim" labl]
	\end{tikzcd}
	\end{equation}
	commutes. 
	Since $\dgbaradjoint$ is given by applying $\mathbb{Z}[0] \oplus -$ locally, this is fairly straightforward.
\end{proof}

\begin{remark}
	The nature of our construction is such that we cannot expect to reproduce in an enriched fashion the adjunction $\Omega^1_{nc}(-) \dashv - \ltimes \A$ for a general $\dg$-category $\A$. 
	Here we have argued that we reproduce it in the case where $\A$ is the initial $\dg$-algebra; however, our arguments would work to reproduce it in the more general case of any $\dg$-algebra $\A$ .
	To do so, we would enrich in the tensor monoidal category of chain complexes over $\A$ on the one hand, and the cartesian monoidal category of $\A$-modules on the other. 
\end{remark}

\subsection{Compatibility with the derived functor}
Theorem \ref{thm:noncommutative-forms-is-dg-bar} says that for a $\dg$-category $\C$ over the $\dg$-category $\mathbb{Z}[0]_{\ob \C}$, viewing $\C$ as an augmented track $\dg$-category and applying $\dgbarconstruction$ is equivalent to the functor $\Omega^1_{nc}$ of \cite{Tabuada2009}. 
However, it is not immediate that the \textit{derived} functors agree: 
the immediate obstruction is that a cofibrant augmented $\dg$-category will almost never be cofibrant as an element of $\trackdgcataug$, since the underlying track-category of a cofibrant track $\dg$-category must be cofibrant, and in particular the underlying category must be free. 
We show in this section that for $\C$ a cofibrant augmented $\dg$-category, applying $\dgbarconstruction$ directly to $\C$ gives (up to weak equivalence) the same result as applying $\dgbarconstruction$ to a cofibrant replacement of $\C$.
\begin{definition}\label{dfn:pushforward-functor}
	Let $S$ be a set, and $K(S)$ and $K(S)^\cof$ as in definition \ref{dfn:K(S)}. Let $\boxtimes = \otimes$ or $\boxtimes = \times$.
	Define a functor $p_*: [K(S)^\cof, \bchbasic]^\boxtimes \to [K(S), \bchbasic]^\boxtimes$ to be the identity-on-objects functor with $p_*(\C)[s,t]$ defined by
	$$\bullet \mapsto \colim_{\Pi_1\C[s,t]}\C[s,t]$$
	where $\bullet$ is the unique object of $p_*(\C)[s,t]$.
\end{definition}

\begin{definition}\label{dfn:pullback-functor}
	Let $S$ be a set, and $K(S)$ and $K(S)^\cof$ as in definition \ref{dfn:K(S)}. Let $\boxtimes = \otimes$ or $\boxtimes = \times$.
	Define a functor $p^*: [K(S), \bchbasic]^\boxtimes \to [K(S)^\cof, \bchbasic]^\boxtimes$ to be the identity-on-objects functor for which the functor $(p^* \C)[s,t]$ is defined by the composition
	$$p^*(\C)[s,t] \to \C[s,t] \to \chbasic$$
	In other words, the value on any object of $p^*(\C)[s,t]$ is the chain complex $\C[s,t](\bullet)$, and the value on any morphism is the identity.  
\end{definition}
The following theorem is immediate.
\begin{theorem}	
	Let $S$ be a set, and $K(S)$ and $K(S)^\cof$ as in definition \ref{dfn:K(S)}. Let $\boxtimes = \otimes$ or $\boxtimes = \times$. The functors $p_* :  [K(S)^\cof, \bchbasic]^\boxtimes \rightleftarrows [K(S), \bchbasic]^\boxtimes : p^*$ are adjoint equivalences.
\end{theorem}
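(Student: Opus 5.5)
The plan is to reduce everything to one fact: each hom-groupoid of $K(S)^\cof$ is a \emph{contractible} (indiscrete, nonempty) groupoid. Colimits over such groupoids are computed by evaluation at any single object, and functors out of them are determined up to unique isomorphism by one value. Throughout I assume $p: K(S)^\cof \to K(S)$ has been chosen as a cofibrant replacement inside $\gpdcat_S$, so that it is identity-on-objects. This is implicit in Definitions \ref{dfn:pushforward-functor} and \ref{dfn:pullback-functor}, which treat $p_*$ and $p^*$ as identity-on-objects functors. With this choice $p$ is an acyclic fibration, hence a local acyclic fibration of groupoids. Since each $K(S)[s,t]$ is the terminal groupoid, each $K(S)^\cof[s,t]$ is weakly equivalent to a point. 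Being nonempty, connected and having trivial automorphism groups, it is an indiscrete groupoid $G_{s,t}$.

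\textbf{Step 1: $p_*$ is well defined and computed by evaluation.} For an indiscrete groupoid $G$ and a functor $F: G \to \chbasic$, every transition map $F(g) \to F(h)$ is the image of the unique morphism $g \to h$. These maps are therefore compatible isomorphisms, so the canonical map $F(g) \to \colim_G F$ is an isomorphism for every $g$. Applying this to $\C[s,t]$ identifies $p_*(\C)[s,t]$ with $\C[s,t](g)$ for any $g$, and the identifications are coherent in $g$. Composition for $p_*(\C)$ has to be induced on colimits. The composition of $\C$ is a family of maps $\C[s,t](g) \boxtimes \C[t,u](h) \to \C[s,u](g \circ h)$, natural in $(g,h) \in G_{s,t} \times G_{t,u}$. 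The product $G_{s,t} \times G_{t,u}$ is again indiscrete, and $\boxtimes$ (either $\otimes$ or $\oplus$) is a functor in two variables. Hence $\colim_{G \times H}(F \boxtimes F') \cong \colim_G F \boxtimes \colim_H F'$, by evaluating both sides at a point. So composition descends to the colimits, and associativity and unitality are inherited because they can be checked after evaluating at points. Functoriality of $p_*$ in morphisms is clear.

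\textbf{Step 2: the two composites.} For $\D \in [K(S),\bchbasic]^\boxtimes$, the object $p^*\D$ is the constant diagram on $\D[s,t](\bullet)$ over a connected nonempty groupoid. Its colimit is therefore canonically $\D[s,t](\bullet)$, which gives a natural isomorphism $\epsilon: p_*p^*\D \to \D$. For $\C \in [K(S)^\cof,\bchbasic]^\boxtimes$, the colimit injections $\C[s,t](g) \to \colim \C[s,t] = (p^*p_*\C)[s,t](g)$ are natural in $g$. They respect composition by construction of the composition in Step 1, so they assemble to a morphism $\eta: \C \to p^*p_*\C$ in $[K(S)^\cof,\bchbasic]^\boxtimes$. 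This $\eta$ is identity on objects and on groupoid parts, and by Step 1 it is an isomorphism.

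\textbf{Step 3: adjunction.} It remains to check the triangle identities for $\eta$ and $\epsilon$. Both identities reduce, levelwise, to the statement that the colimit injection followed by the canonical identification of a colimit of a constant diagram is the identity, which is immediate. Since $\eta$ and $\epsilon$ are isomorphisms, $p_* \dashv p^*$ is an adjoint equivalence.

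The only step with real content, and the one I expect to need the most care, is the observation that the hom-groupoids of $K(S)^\cof$ are indiscrete. This relies on choosing the cofibrant replacement identity-on-objects and on weak equivalences of groupoids detecting $\pi_0$ and $\pi_1$. Without indiscreteness, $\colim$ would quotient by nontrivial automorphisms, $\eta$ would fail to be an isomorphism, and the compatibility of $\colim$ with $\otimes$ would also break. Everything after that fact is formal.
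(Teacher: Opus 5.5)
Your proof is correct. The paper gives no argument beyond calling the statement immediate, and you supply exactly the routine verification it omits: with $p$ taken identity-on-objects (as the definitions of $p_*$ and $p^*$ require), the hom-groupoids of $K(S)^\cof$ are indiscrete, so $p_*$ is evaluation at a point and the unit and counit of $\colim \dashv \mathrm{const}$ are isomorphisms. One small addition: in the $\otimes$ case the objects are \emph{augmented} track dg-categories, so you should also check that the augmentations $\C[s,t](g)\to\mathbb{Z}[0]$ descend to the colimits and are respected by $\eta$ and $\epsilon$, which follows from the same evaluation argument.
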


\begin{lemma}
	The following diagram of left adjoint functors commutes up to natural isomorphism:
	\begin{center}
	\begin{tikzcd}[sep = large]
		{[K(S)^\cof, \bchbasic]}^\otimes
			\ar[rr, "\left(\dgbarconstruction\right)_{K(S)^\cof}"]\
			\ar[d, "p_*"]
		&&
		{[K(S)^\cof, \bchbasic]}^\times
			\ar[d, "p_*"]
		\\
		{[K(S), \bchbasic]}^\otimes
			\ar[rr, "\left(\dgbarconstruction\right)_{K(S)}"]
		&&
		{[K(S), \bchbasic]}^\times
	\end{tikzcd}
	\end{center}
\end{lemma}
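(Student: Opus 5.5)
The plan is to pass to right adjoints. All four functors in the square are left adjoints. The vertical functors $p_*$ have right adjoints $p^*$ by the preceding theorem; they are even adjoint equivalences, so $p^*$ is also left adjoint to $p_*$. The horizontal functors $(\dgbarconstruction)_T$ have right adjoints $(\dgbaradjoint)_T$. By uniqueness of adjoints (conjugate natural transformations), it therefore suffices to produce a natural isomorphism
$$p^* \circ \left(\dgbaradjoint\right)_{K(S)} \cong \left(\dgbaradjoint\right)_{K(S)^\cof}\circ p^*$$
of functors $[K(S), \bchbasic]^\times \to [K(S)^\cof, \bchbasic]^\otimes$. The mate of this isomorphism is then the desired isomorphism of left adjoints.

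To check the right-adjoint square, I will compute both sides on an object $\C \in [K(S),\bchbasic]^\times$ at each pair $s,t\in S$.

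The functor $\dgbaradjoint$ is applied locally. On a groupoid chain complex $(G,C)$ it keeps $G$ and replaces the chain complex by $C\oplus \mathbb{Z}[0]$, augmented by the projection; this is just $F$ applied to each hom-object. The functor $p^*$ keeps the groupoid $p^*(\C)[s,t]=K(S)^\cof[s,t]$ and takes the constant functor with value $\C[s,t](\bullet)$, with identities on morphisms.

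Both composites therefore have groupoid part $K(S)^\cof[s,t]$ and chain complex the constant functor with value $\C[s,t](\bullet)\oplus\mathbb{Z}[0]$, so the hom-objects agree on the nose. It remains to see that the composition maps agree. On each side, composition on the $\C[s,t](\bullet)$ summands is induced from the cartesian composition of $\C$: the cross terms are the two coproduct inclusions followed by composition, and the product of two $\C$-terms is zero. The $\mathbb{Z}[0]$ summands act as units. This is the lax structure of $F$, and it commutes with pulling back along $p$ because $p^*$ just precomposes with $p$ on groupoid parts. The same holds for the augmentations and for the identity maps.

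Naturality in $\C$ is clear, since everything is defined pointwise.

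The main obstacle is bookkeeping rather than content: one must make sure the mate really is an isomorphism. I expect to handle this either by noting that the conjugate of a natural isomorphism between right adjoints is an isomorphism, or directly, using that $p_*$ and $p^*$ form an adjoint equivalence. Concretely, $(\dgbarconstruction)_{K(S)}\,p_* \cong (\dgbarconstruction)_{K(S)}\, p_*\,p^*\,p_* \cong p_*\,(\dgbarconstruction)_{K(S)^\cof}\,p^*\,p_* \cong p_*\,(\dgbarconstruction)_{K(S)^\cof}$. Here the middle step uses $p^*(\dgbarconstruction)_{K(S)}\cong(\dgbarconstruction)_{K(S)^\cof}p^*$, which itself follows by taking left adjoints of the right-adjoint square after swapping the roles of $p_*$ and $p^*$ via the equivalence.
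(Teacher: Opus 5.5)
Your proposal is correct and takes essentially the paper's approach: the paper's entire proof is the remark that the square of right adjoints (the functors $p^*$ and the restricted right adjoints) commutes, and your comparison of hom-objects $\C[s,t](\bullet)\oplus\mathbb{Z}[0]$ over $K(S)^\cof[s,t]$, together with the composition maps, is that check made explicit. The worry in your last paragraph is not a real obstacle: an isomorphism between the two composite right adjoints already gives, by uniqueness of adjoints, an isomorphism between the composite left adjoints, so the detour through the adjoint equivalence is unnecessary.
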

\begin{proof}
	It is easy to check that the diagram of right adjoints commutes.
\end{proof}
We therefore obtain:
\begin{theorem}\label{thm:dgbar-is-derived}
	Let $S$ be a set, and $\C \in [K(S), \bchbasic]$.
	The map $p^*\C \to \C$ (definition \ref{dfn:pullback-functor}) is sent by $\dgbarconstruction$ to a map isomorphic to the pullback $p^*\dgbarconstruction \C \to \dgbarconstruction \C$.
\end{theorem}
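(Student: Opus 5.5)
The plan is to reduce the statement to the commuting square of the preceding lemma, together with the adjoint equivalences $p_* \dashv p^*$ on both sides. We regard $p^*\C \to \C$ as a morphism of $\trackdgcataug$. It is the identity on objects and is induced on each hom by the map of groupoid chain complexes $p^*\C[s,t] \to \C[s,t]$, which lies over $p\colon K(S)^\cof[s,t] \to K(S)[s,t]$ and is the identity on chain complexes. By adjunction, this morphism is the same as the unit/counit datum: a map $X \to \C$ lying over $p$ corresponds to a map $p_*X \to \C$ in $[K(S),\bchbasic]^\otimes$. Under this correspondence, $p^*\C \to \C$ corresponds to the counit $p_*p^*\C \to \C$, which is an isomorphism because $p_*\dashv p^*$ is an adjoint equivalence.

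The key steps, in order, are as follows.

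First, I will show that $\dgbarconstruction$ sends maps lying over $p$ to maps lying over $p$. This follows from the theorem that $\catfunctor{\Pi_1}\circ\dgbarconstruction = \catfunctor{\Pi_1}$, since $\dgbarconstruction$ preserves underlying track categories.

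Second, I will describe such maps more concretely. A map $Y \to \D$ in $\trackchaincat$ (or $\trackdgcataug$) that is the identity on objects and lies over $p$ is the same as a map $Y \to p^*\D$ in $[K(S)^\cof,\bchbasic]$, and equivalently a map $p_*Y \to \D$ in $[K(S),\bchbasic]$. The reason is that a map of groupoid chain complexes over a fixed groupoid map is a natural transformation into the pullback.

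Third, I will apply this to $\dgbarconstruction(p^*\C \to \C)$. Its adjunct in $[K(S),\bchbasic]^\times$ is the composite
$$p_*(\dgbarconstruction)_{K(S)^\cof}\,p^*\C \cong (\dgbarconstruction)_{K(S)}\,p_*p^*\C \to (\dgbarconstruction)_{K(S)}\C,$$
where the isomorphism is the preceding lemma and the second map is $\dgbarconstruction$ of the counit, hence an isomorphism. Transposing back across $p_*\dashv p^*$, the map $\dgbarconstruction p^*\C \to \dgbarconstruction \C$ factors as
$$\dgbarconstruction p^*\C \xrightarrow{\;\cong\;} p^*\dgbarconstruction\C \to \dgbarconstruction\C,$$
with the second map being the canonical one. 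This is exactly the claim.

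The main obstacle is the naturality bookkeeping in the third step. One has to check that the isomorphism of the preceding lemma is compatible with the adjunct of $\dgbarconstruction$ applied to a map over $p$. In other words, one must confirm that applying $\dgbarconstruction$ to the morphism $p^*\C\to\C$ of $\trackdgcataug$ really agrees with its restriction to fixed track categories, composed with the comparison. I would verify this on right adjoints, where everything is computed locally: $\dgbaradjoint$ applies $F$ hom-wise and commutes with $p^*$ on the nose. Taking mates of that strict commutation then gives the required compatibility.
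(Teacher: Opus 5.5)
Your proposal is correct and follows the paper's route. The paper deduces the statement directly from the preceding lemma, namely that $p_*$ commutes up to isomorphism with the restricted left adjoints, proved by noting that the locally defined right adjoints commute strictly with $p^*$. It combines this with the fact that $p_*\dashv p^*$ is an adjoint equivalence on both sides.

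What you add is the bookkeeping the paper leaves implicit. You make explicit that morphisms lying over $p$ correspond to morphisms into $p^*(-)$ in the fibers. You also check, via mates, that the lemma's isomorphism composed with the image of the counit really is the adjunct of the image of $p^*\mathcal{C}\to\mathcal{C}$.
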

And from this, the desired result follows:
\begin{corollary}
	Let $\C$ be a cofibrant augmented $\dg$-category. 
	Viewing $\C$ as an augmented track $\dg$-category, $\dgbarconstruction(\C)$ computes the value of the derived functor of $\dgbarconstruction$ on $\C$.
\end{corollary}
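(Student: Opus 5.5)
The plan is to compare $\dgbarconstruction(\C)$ with $\dgbarconstruction(\tilde\C)$ for a cofibrant replacement $\tilde\C$ of $\C$ in $\trackdgcataug$, using $p^*\C$ as the intermediate object and Theorem \ref{thm:dgbar-is-derived} to control what $\dgbarconstruction$ does to the comparison map. Let $S = \ob\C$. By Definition \ref{dfn:pullback-functor}, $p^*\C$ has underlying track category $K(S)^\cof$ and hom-complexes those of $\C$, pulled back along $p$. Since $p$ is a weak equivalence of track categories and the chain complex part is unchanged, the map $p^*\C \to \C$ is a Dwyer--Kan equivalence in $\trackdgcataug$. Once we know $p^*\C$ is cofibrant, $\mathbb{L}\dgbarconstruction(\C)$ is computed by $\dgbarconstruction(p^*\C)$. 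By Theorem \ref{thm:dgbar-is-derived}, $\dgbarconstruction(p^*\C) \to \dgbarconstruction(\C)$ is isomorphic to $p^*\dgbarconstruction\C \to \dgbarconstruction\C$. This last map is again a Dwyer--Kan equivalence, since it is $p$ on track categories and the identity on chain complexes. So $\dgbarconstruction(\C) \simeq \mathbb{L}\dgbarconstruction(\C)$.

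The key steps, in order, are as follows.
\begin{enumerate}
\item Show that $p^*$ and the comparison maps $p^*X \to X$ are weak equivalences, which is a local check on hom-objects.
\item Show that $p^*\C$ is cofibrant in $\trackdgcataug$ whenever $\C$ is a cofibrant augmented $\dg$-category.
\item Invoke Theorem \ref{thm:dgbar-is-derived} and Theorem \ref{thm:dg-bar-construction-is-quillen}: the latter gives that $\dgbarconstruction$ is left Quillen, so its derived functor is computed on cofibrant replacements.
\end{enumerate}

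The main obstacle is step 2. For it I would reduce to generating cofibrations. A cofibrant augmented $\dg$-category is a retract of a cell complex built from the object-set inclusions $\emptyset \to \{0\}$ and the free cells $\mathcal{U}(X\to Y)$ on two objects, where $X \to Y$ is a generating cofibration of chain complexes. Since $p^*$ is defined by pulling back the hom functors along $p$, I would try to show that $p^*$ commutes with the relevant pushouts and transfinite compositions. If that works, $p^*\C$ is a cell complex in $\trackdgcataug$ whose groupoid part is the cofibrant track category $K(S)^\cof$, with cells pulled back from cofibrant cells. I would then verify that each attaching map is still a cofibration by checking the lifting property against local trivial fibrations that are surjective on objects.

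If preservation of colimits by $p^*$ fails, I would fall back on a different route. I would take an honest cofibrant replacement $Q\C \to p^*\C$ in the fixed-track-category setting $[K(S)^\cof,\bchbasic]^\otimes$, and use that the adjoint equivalence $p_* \dashv p^*$ transports weak equivalences. The aim would be to show that $\dgbarconstruction(Q\C) \to \dgbarconstruction(p^*\C)$ is a weak equivalence, using that $\dgbarconstruction$ restricted to this fixed track category is the indecomposables functor, which preserves weak equivalences between cofibrant objects.
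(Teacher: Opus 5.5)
Your proposal is correct and is the paper's argument: show that $p^*\C$ is cofibrant as an augmented track $\dg$-category, then apply Theorem \ref{thm:dgbar-is-derived}. You also spell out the two Dwyer--Kan equivalences and the left Quillen property, which the paper leaves implicit.

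One small repair to step 2: $p^*$ is only defined for a fixed object set $S$, so the cell argument should be run in $\dg\text{-}\cat_S$ rather than with object-adding cells. Alternatively, lift directly. A map $p^*\C \to \mathcal{B}$ amounts to a track-functor $\phi\colon K(S)^\cof \to \Pi_1\mathcal{B}$ together with a dg-functor from $\C$ into the dg-category of hom-complexes of $\mathcal{B}$ along $\phi$. Against a trivial fibration $\mathcal{E} \to \mathcal{B}$, you first lift $\phi$ using cofibrancy of $K(S)^\cof$, and then lift the dg-functor using cofibrancy of $\C$ in $\dg\text{-}\cat_S$.
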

\begin{proof}
	Since the generating cofibrations of $\trackdgcataug$ agree with those of $\dg\text{-}\cat$, it follows that if $\C$ is a cofibrant $\dg$-category, then $p^*\C$ is cofibrant as a track $\dg$-category. The result now follows from Theorem \ref{thm:dgbar-is-derived}.
\end{proof}
In particular, the derived functor of $\dgbarconstruction$ coincides with the derived functor of $\Omega^1_{nc}$ on the subcategory of augmented $\dg$-categories.
\section{Endomorphism monoids of intervals and Theorem \ref{alphathm:complicated}}\label{section:thmB}
In this section, we prove Theorem \ref{alphathm:complicated} (Theorem \ref{thm:main-theorem-chapter-two}), which will require us to do some analysis of pushouts of enriched categories.
Throughout this section, $\V$ and $\W$ are closed monoidal model categories, and we assume the unit $\iv$ of $\V$ is cofibrant.
We assume we have a Quillen adjunction
\adjunctiondiagram{L}{\V}{\W}{R}
with $R$ lax monoidal, which induces a (not necessarily Quillen) adjunction
\adjunctiondiagram{\lcat}{\vcat}{\wcat}{\rcat}
For $\C \in \vcat$ and $x,y \in \ob \C$, we use the notation $\C[x,y]$ for the corresponding hom-object $\Hom_\C(x,y)$. We use the notation $\C_x$ for the hom-object $\C[x,x]$.
We write our composition operations in left-to-right order; e.g. composition is an operation $\C[x,y] \otimes \C[y,z] \to \C[x,y]$.
\begin{definition}\label{dfn:suspension-category}
	For $X \in \V$, we define $\catsus X \in \vcat$ to be the $\V$-category with object set $\{0,1\}$, and with hom-objects given as follows:
		$$\catsus X[0,0] = \catsus X[1,1] = \iv \quad \quad \catsus X[1,0] = \emptyset \quad \quad \catsus X[0,1] = X$$
\end{definition}
\begin{definition}\label{dfn:multi-suspension-category}
	For $X_1,...,X_n$ a list of objects in $\V$, we define $\catsus(X_1,\dots, X_n)$ to be the $\V$-category given as the pushout
	\begin{center}
	\begin{tikzcd}[sep = small]
	& \bullet \ar[ld, "1", swap] \ar[rd, "0"] & & \bullet \ar[ld, "1", swap] \ar[rd, "0"] & & \cdots & & \bullet \ar[ld, "1", swap] \ar[rd, "0"] 
	\\
	\catsus X_1 & & \catsus X_2 & & \catsus X_3 & & \catsus X_{n-1} & & \catsus X_n
	\end{tikzcd}
	\end{center}
	where $\bullet$ is the initial $\V$-category with one object. 
	We label the objects of this pushout $\{0,\dots,n\}$. For example, $\catsus(X_1,\dots,X_n)[i - 1, i] \cong X_i$.
\end{definition}

\begin{lemma}\label{lem:catsus-preserved-by-adjoint}
	For an object $X$ in $\V$, there is a natural isomorphism
	$$\lcat \left( \catsus X \right) \cong \catsus (L X)  $$
	natural in $X$. Similarly, for a list of objects $X_1,...,X_n \in \V$, there is a natural isomorphism
	$$\lcat \left( \catsus(X_1,...,X_n) \right) \cong \catsus(L X_1, \dots, LX_n)$$
\end{lemma}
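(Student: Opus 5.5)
The plan is to prove the isomorphism by Yoneda: I will show that $\lcat(\catsus X)$ and $\catsus(LX)$ corepresent the same functor on $\wcat$, naturally in $X$.

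First consider a $\W$-category $\D$. A $\W$-functor $\catsus Y \to \D$ amounts to a pair of objects $(d_0,d_1)$ of $\D$ together with a morphism $Y \to \D[d_0,d_1]$ in $\W$. This holds because $\catsus Y$ is free on its one nontrivial hom-object: $\catsus Y[1,0]$ is initial, so there are no nontrivial composites, and the identity hom-objects are the unit. Composition and unit constraints therefore impose no further conditions. Spelling this out from the definition is a routine check that I will not expand.

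Now compute, using the adjunction $\lcat \dashv \rcat$:
$$\Hom_{\wcat}(\lcat\catsus X, \D) \cong \Hom_{\vcat}(\catsus X, \rcat\D).$$
Since $\rcat\D$ has the same objects as $\D$ and hom-objects $R(\D[d,d'])$, the right-hand side is the set of pairs $(d_0,d_1)$ together with a morphism $X \to R\D[d_0,d_1]$.

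The only point needing care is the unit. A map $\iv \to R\D[d_i,d_i]$ is forced to be the unit of $\rcat\D$, and this unit is defined through the lax unit $\iv \to R\iw$. That causes no difficulty here, because the identity components of a functor out of $\catsus X$ are determined and carry no data.

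Next apply the adjunction $L \dashv R$ to the nontrivial component: a morphism $X \to R\D[d_0,d_1]$ corresponds to a morphism $LX \to \D[d_0,d_1]$. By the first step, the resulting set of pairs $(d_0,d_1)$ with a map $LX \to \D[d_0,d_1]$ is exactly $\Hom_{\wcat}(\catsus LX, \D)$. Every identification above is natural in both $\D$ and $X$, so Yoneda gives $\lcat\catsus X \cong \catsus LX$ naturally in $X$.

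For the multi-object version, $\catsus(X_1,\dots,X_n)$ is defined as an iterated pushout in $\vcat$ of copies of $\catsus X_i$ glued along $\bullet$. Since $\lcat$ is a left adjoint, it preserves this colimit. We need $\lcat(\bullet) \cong \bullet$; this holds because $\bullet$ corepresents the functor sending $\D$ to its object set, and $\rcat$ preserves objects. Applying the first part to each piece, and checking that the maps from $\bullet$ correspond to the inclusions at objects $0$ and $1$, gives $\lcat\catsus(X_1,\dots,X_n) \cong \catsus(LX_1,\dots,LX_n)$.

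The main obstacle is the first step: being careful that maps out of $\catsus Y$ really impose no compatibility conditions, and in particular that the units cause no trouble. Alternatively, one can argue directly by the same corepresentability computation, using that a functor out of $\catsus(Y_1,\dots,Y_n)$ is a string of objects $d_0,\dots,d_n$ with maps $Y_i \to \D[d_{i-1},d_i]$.
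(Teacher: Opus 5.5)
Your proposal is correct and is essentially the paper's argument. Via the adjunctions $\lcat \dashv \rcat$ and $L \dashv R$, both $\lcat(\catsus X)$ and $\catsus(LX)$ corepresent the functor sending $\D$ to pairs of objects $a,b$ with a map $LX \to \D[a,b]$, and the multi-object case follows because $\lcat$ preserves pushouts and $\bullet$; your checks on the unit components and on $\lcat(\bullet)\cong\bullet$ just fill in details the paper treats as evident.
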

\begin{proof}
	The second statement follows from the first, as $\lcat$ preserves pushouts and $\bullet$. It remains to show the first statement, which follows by observing that $\lcat \left( \catsus X \right)$ and  $\catsus (L X)  $ satisfy the same universal property:
	Using the adjunction, it is easy to see that these satisfy the same universal property: a map from either into a $\W$-category $\D$ is equivalent to the choice of two objects $a,b \in \ob \D$ and a map $LX \to \D[a,b]$.
\end{proof}
For objects $X,Y$ is straightforward to check that $\sus(X,Y)[0,2] \cong X \otimes Y$, inducing a map $\sus(X \otimes Y) \to \sus(X,Y)$ which is a local isomorphism. 
Understanding the behavior of $\lcat$ on this map is a primary step in understanding the behavior of $\lcat$ on pushouts more generally. 
The following lemma will therefore be key in the proof of our main theorem: 
\begin{lemma}\label{lem:join-of-intervals-is-colax-structure-map}
	 Let $X,Y \in \V$. 
	 Let $f: \sus(X \otimes Y) \to \sus(X,Y)$ be the map induced by the identification $\sus(X,Y)[0,2] \cong X \otimes Y$. 
	 Then the map $\lcat(f)[0,1]: \lcat(\sus (X \otimes Y))[0,1] \to \lcat(\sus(X,Y))[0,2]$ is isomorphic to the colax structure map 
	 $$L(X \otimes_V Y) \to L(X) \otimes_W L(Y)$$
	 More generally, for a list of objects $X_1,\cdots,X_n$ in $\V$, the evident map
	 $$\lcat(\sus (X_1 \ov X_2 \ov \cdots \ov X_n))[0,n] \to \lcat(\sus(X_1,X_2, \dots, X_n))[0,n]$$
	 is isomorphic to the colax structure map 
	 $$L(X \ov X_2 \ov \cdots \ov X_n) \to L(X_1) \ow L(X_2) \ow \cdots \ow L(X_n)$$
	 ``In the case $n=0$,'' we have that the map
	 $$\lcat(\sus \iv) \to \{0\}_\W $$
	 (where $\{0\}_\W$ is the initial $\W$-category on one object) is given by the colax structure map $L(\iv) \to \iw$
\end{lemma}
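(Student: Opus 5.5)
The plan is to use Lemma \ref{lem:catsus-preserved-by-adjoint} to replace both source and target by suspensions of objects of $\W$. Then I identify the map $\lcat(f)$ through its universal property, via the adjunction $\lcat \dashv \rcat$. By that lemma, $\lcat(\sus(X\otimes Y)) \cong \sus(L(X\otimes Y))$ and $\lcat(\sus(X,Y)) \cong \sus(LX, LY)$. The hom-object of the latter from $0$ to $2$ is $LX \ow LY$. So $\lcat(f)$ is a $\W$-functor $\sus(L(X\otimes Y)) \to \sus(LX,LY)$ which is the identity on the objects $0$ and $2$. Such a functor is determined by a single morphism $\phi: L(X \ov Y) \to LX \ow LY$ in $\W$, and the task is to identify $\phi$.

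To do this, I test against an arbitrary $\W$-category $\D$ and use Yoneda. A functor $\sus(LX,LY) \to \D$ amounts to objects $a,b,c$ of $\D$ together with maps $u: LX \to \D[a,b]$ and $v: LY \to \D[b,c]$. Precomposing with $\lcat(f)$ produces the map
\[
L(X\ov Y) \xrightarrow{\phi} LX \ow LY \xrightarrow{u\otimes v} \D[a,b]\ow\D[b,c] \xrightarrow{\circ} \D[a,c].
\]
On the other side of the adjunction, the pair $(u,v)$ corresponds to adjoint maps $u^\flat: X \to R\D[a,b]$ and $v^\flat: Y \to R\D[b,c]$. Precomposing with $f$ gives the map $X \ov Y \to R\D[a,b]\ov R\D[b,c] \to R(\D[a,b]\ow\D[b,c]) \to R\D[a,c]$, using the lax structure of $R$ and then composition in $\rcat\D$. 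Taking its adjoint and using naturality of the adjunction, I should get
\[
\circ \circ (u\otimes v) \circ \kappa,
\]
where $\kappa: L(X\ov Y)\to LX\ow LY$ is the colax structure map. This is the usual mate of the lax structure: the adjoint of $X\ov Y \to RLX \ov RLY \to R(LX\ow LY)$.

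To conclude, I specialize to the universal case $\D=\sus(LX,LY)$ with $u,v$ the identities; there composition $\sus(LX,LY)[0,1]\ow\sus(LX,LY)[1,2]\to \sus(LX,LY)[0,2]$ is the identity of $LX\ow LY$. Equality of the two descriptions then forces $\phi=\kappa$. The main obstacle is bookkeeping: I must check that the mate of the lax structure map really matches what the adjunction does to $\rcat$-compositions. This follows from naturality of the unit and counit together with the definition of $\rcat$ as $R$ applied locally, composition being transported via the lax map.

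The $n$-ary case goes the same way, using the iterated pushout description and $n$ composable maps. The iterated colax map is the mate of the iterated lax map, and both are well defined up to the coherence of the lax structure. For $n=0$, a map $\{0\}_\W\to\D$ is an object $a$, while a map $\lcat(\sus\iv)\cong\sus(L\iv)\to\D$ is objects $a,b$ together with $L\iv\to\D[a,b]$. The map $\sus\iv\to\{0\}_\V$ collapses both objects and sends $\iv$ to the identity of $\{0\}_\V$, and the same adjunction argument identifies the component on hom-objects with the mate $L\iv\to\iw$ of the unit structure $\iv\to R\iw$. That is the colax unit map.
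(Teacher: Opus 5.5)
Your proposal is correct and is essentially the paper's argument. Both proofs identify the hom-objects via Lemma~\ref{lem:catsus-preserved-by-adjoint} and read off $\lcat(f)$ through its adjoint transpose, whose $[0,2]$-component is $X\ov Y\to RLX\ov RLY\to R(LX\ow LY)$, the map whose mate is the colax structure map; your Yoneda test over all $\D$, specialized to $\D=\sus(LX,LY)$ with $u,v$ the identities, is exactly the paper's step of composing $f$ with the unit of the adjunction at $\sus(X,Y)$, and your $n=0$ argument likewise matches the paper's observation that the map is adjoint to $\sus\iv\to\rcat(\{0\}_\W)$.
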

\begin{proof}
	The isomorphisms $\lcat(\sus (X \otimes Y))[0,1] \cong L(X \otimes_V Y)$ and $\lcat(\sus(X,Y))[0,2] \cong L(X) \otimes_W L(Y)$ are from Lemma \ref{lem:catsus-preserved-by-adjoint}.
	The identification with the colax structure will come from considering the adjoint of $\lcat(f)[0,1]$; which is given by the composition of $f$ with the unit of the adjunction at $\sus(X,Y)$:
	From the definition of $\rcat$, it follows that the adjoint to $\lcat(f)[0,1]$ is the composition 
	$$X \otimes Y \to  RLX \otimes RL Y \to R(LX \otimes LY)$$
	with the first map induced by the unit of the adjunction and the second by the lax structure of $R$. The adjoint to this is by definition the colax structure map of $L$, as desired.
	\par 
	The statement for arbitrary $n$ follows in the same way. 
	The ``$n = 0$'' case is more obvious; this is simply stating that the map $\lcat(\sus \iv) \to \{0\}_W $ is adjoint to the map $\sus \iv \to \rcat \left (\{ 0\}_\W \right)$. 
\end{proof}
\subsection{Generating intervals}
Let $C_1$ and $C_2$ be cylinder objects for the unit $\iv$. From these cylinders, we can construct an interval (in the sense if \cite{Muro2012}, NOT in the sense of \cite{BergerMoerdijk2012}) $\mathbb{I}(C_1,C_2)$ given as follows:
we define $\H$ by $\sus\iv \coprod (\sus\iv)^{\text{op}}$, where the coproduct is taken in the category of $\V$-categories with fixed object set $\{0,1\}$.
Denote by $a$ and $b$ the canonical maps $\iv \to \H[0,1] $ and $\iv \to \H[1,0]$, respectively.
We then define $\K$ and $\I(C_1,C_2)$ in $\vcat$ by the following pushouts in $\vcat$:
\begin{equation}\label{diagram:pushout-description-intervals}
\begin{tikzcd}
	\T(\iv \coprod \iv) \ar[d] \ar[r, "\id_1 \coprod b\otimes a"]
		& \H \ar[d]
	\\
	\T(C_2) \ar[r]
		& \K
\end{tikzcd}
\quad \quad \quad 
\begin{tikzcd}
	\T(\iv \coprod \iv) \ar[d] \ar[r, "\id_0 \coprod a\otimes b "]
		& \K \ar[d]
	\\
	\T(C_1) \ar[r]
		& \I(C_1,C_2)
\end{tikzcd}
\end{equation}
Roughly, $\I(C_1,C_2)$ is free on generators $a: 0 \to 1$ and $b: 1 \to 0$, and on homotopies $C_2: \id_1 \Rightarrow b \otimes a$ and $C_1: \id_0 \Rightarrow a \otimes b$.
For example, if $\V$ is equipped with the trivial model structure, then the cylinders $C_1$ and $C_2$ are isomorphic to $\iv$, and it is easy to check that $\I(C_1,C_2) \cong \I_\V$ (as in Definition \ref{dfn:interval-berger-moerdijk}). 
For the rest of this section, we will analyze the structure of the intervals $\I(C_1,C_2)$, in particular the endomorphism monoid $\I(C_1,C_2)_0$. 
We will heavily draw on the analysis of pushouts in $\vcat_{\{0,1\}}$ found in \cite[Section 3]{BergerMoerdijk2012}.
\par
Recall that for $\I$ a $\V$-category with object set $\{0,1\}$, we denote by $\I_0$ and $\I_1$ the endomorphism monoids of the objects $0$ and $1$, respectively.
Our main goal is to describe $\I(C_1,C_2)_0$ as a (homotopy) pushout of monoids in $\V$.
For the rest of this subsection, we fix two cylinders $\iota_1 :\iv \coprod \iv \hookrightarrow C_1$ and $\iota_2: \iv \coprod \iv \hookrightarrow C_2$, and let $\H$ and $\K$ be as in diagram \eqref{diagram:pushout-description-intervals}.
First, we will describe $\K_0$ as a pushout of monoids.
\par 
Let $n > 0$. For $P \subseteq \n$, define an object $X^{(n)}_P$ of $\V$ by
$$X^{(n)}_P = \iv \otimes A_1 \otimes A_2 \otimes \cdots \otimes A_n \otimes \iv$$
where 
$$
A_i = 
\begin{cases}
	C_2 & \text{ if } i \in P \\
	\iv \coprod \iv & \text{ if } i \not \in P
\end{cases}
$$
The natural maps $X^{(n)}_P \to X^{(n)}_Q$ for $P \subseteq Q$ (induced by the map $\iota_2: \iv \coprod\iv \to C_2$) give rise to a diagram with shape an $n$-dimensional cube and objects the $X^{(n)}_P$.
Define $X^{(n)}_{-}$ to be the colimit of the full subdiagram consisting of all of the objects except $X^{(n)}_{\n}$. 
We define $X^{(n)}$ to be $X^{(n)}_{\n}$.
Now, inductively define monoids $M^{(n)}(C_1,C_2)$ as follows: 
$M^{(0)}(C_1,C_2) = \H_0 \cong \tiv$. For $n > 0$, $M^{(n)}(C_1,C_2)$ is the following pushout of monoids: 
\begin{equation}\label{diagram:homotopy-pushout-step}
\begin{tikzcd}
\T(X^{(n)}_{-}) \ar[d] \ar[r] 
	& M^{(n-1)}(C_1,C_2) \ar[d]
\\
\T\left(X^{(n)} \right)\ar[r]
	& M^{(n)}(C_1,C_2)
\end{tikzcd}
\end{equation}
We have not yet defined the map $\T(X^{(n)}_{-}) \to M^{n}(C_1,C_2)$, and we do so now: for $n= 1$, we need to define a map $\iv\otimes (\iv \coprod \iv) \otimes \iv \to \H_0$, which we define as the composite 
\begin{center}
\begin{tikzcd}[sep = huge]
\iv \otimes (\iv \coprod \iv) \otimes \iv 
	\ar[r, "a \otimes (\id \coprod ab) \otimes b"]
&
\H[0,1] \otimes \H_1 \otimes \H[1,0] 
	\ar[r, "\circ "]
&
\H_0
\end{tikzcd}
\end{center}
where $\circ$ is the composition map in $\H$. 
Now for $n > 1$, we inductively define the map $X^{(n)}_{-} \to M^{(n-1)}(C_1,C_2)$ as follows: we must define compatible maps $X^{(n)}_P \to M^{(n)}(C_1,C_2)$, for $P \subseteq \n$.
Suppose that $ i \not \in P$. Then we can write
$$X^{(n)}_P = \iv \otimes A_1 \otimes \cdots \otimes (\iv \coprod \iv) \otimes \cdots \otimes A_n \otimes \iv 
$$
Which is isomorphic to the coproduct
$$ 
\iv \otimes A_1 \otimes \cdots \otimes A_{i-1 } \otimes \iv \otimes A_{i + 1} \otimes \cdots \otimes A_n \otimes \iv 
\coprod 
\iv \otimes A_1 \otimes \cdots \otimes A_{i-1 } \otimes \iv \otimes A_{i + 1} \otimes \cdots \otimes A_n \otimes \iv 
$$
We map each component in inductively: the first component we  rewrite as 
$$\iv \otimes A_1 \otimes \cdots \otimes A_{i-1 } \otimes A_{i + 1} \otimes \cdots \otimes A_n \otimes \iv 
$$
and think of as being $X^{(n-1)}_{d_i(P)}$. The second component we write as 
$$\left( \iv \otimes A_1 \otimes \cdots \otimes A_{i-1} \otimes \iv \right) \otimes \left( \iv \otimes A_{i+1} \otimes \cdots \otimes A_n \otimes \iv \right)
$$
And we map each half separately into $M^{(n-1)}(C_1,C_2)$ via the natural maps $X^{(i)} \to M^{i}(C_1,C_2) \to M^{(n-1)}(C_1,C_2)$ and $X^{(n-i)} \to M^{(n-i)}(C_1,C_2) \to M^{(n-1)}(C_1,C_2)$, obtaining a map on the tensor product using the multiplication in $M^{(n-1)}$. 
We omit checking that that these maps $X^{(n)}_P$ compatibly assemble to give a map out of the colimit $X^{(n)}_{-}$, which follows inductively (i.e. inductively applying the fact that that diagram \eqref{diagram:homotopy-pushout-step} commutes).
\par 
Before moving on, we note that this description of $M^{(n)}(C_1,C_2)$ is in fact a homotopy pushout description:
\begin{lemma}\label{lem:homotopy-pushout-monoid}
	The diagram \eqref{diagram:homotopy-pushout-step} is a homotopy pushout.
\end{lemma}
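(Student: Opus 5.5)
The plan is to show that the left vertical map $\T(X^{(n)}_{-}) \to \T(X^{(n)})$ is a cofibration between cofibrant objects in the model category of monoids in $\V$. Once this holds, the pushout \eqref{diagram:homotopy-pushout-step} is a pushout along a cofibration in a left proper setting, or at least along a cofibration with cofibrant source. It is then a homotopy pushout by the standard criterion: a pushout along a cofibration in which all objects are cofibrant is a homotopy pushout. This uses the model structure on monoids transferred along the free--forgetful adjunction $\T \dashv U$, which exists under the monoid axiom by \cite{SchwedeShipley1998}. Since $\T$ is left Quillen for this transferred structure, it suffices to check two things: that $X^{(n)}_{-} \to X^{(n)}$ is a cofibration in $\V$, and that $X^{(n)}_{-}$ is cofibrant.

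For the first point, note that $X^{(n)}_{-} \to X^{(n)}$ is precisely the iterated pushout-product
\[
\iv \otimes \left(\iota_2 \boxtimes \iota_2 \boxtimes \cdots \boxtimes \iota_2\right) \otimes \iv
\]
of $n$ copies of the cofibration $\iota_2: \iv \coprod \iv \to C_2$. This is because the colimit of the punctured cube $P \subsetneq \n$ is by definition the source of the iterated pushout-product. Since $\iv$ is cofibrant, $\iv \coprod \iv$ is cofibrant and $\iota_2$ is a cofibration, as $C_2$ is a cylinder. The pushout-product axiom, applied inductively, then shows that this map is a cofibration. Tensoring with $\iv$ on either side is an isomorphism, so nothing changes there.

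For the second point, each $X^{(n)}_P$ is a tensor product of cofibrant objects and is therefore cofibrant, again by the pushout-product axiom applied to the maps $\emptyset \to A_i$. The colimit $X^{(n)}_{-}$ of the punctured cube can be built as an iterated pushout along pushout-products of the same type, so it is cofibrant as well. Alternatively, $X^{(n)}_{-}$ is cofibrant simply because it is the source of a cofibration whose target $X^{(n)}$ is cofibrant together with the cube filtration. I would carry out this step by a latching-object argument: the punctured cube is Reedy cofibrant, so its colimit is cofibrant.

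Finally, I will argue by induction that $M^{(n-1)}(C_1,C_2)$ is cofibrant as a monoid, starting from $M^{(0)} = \H_0 \cong \T(\iv)$, which is cofibrant because $\iv$ is. A pushout of cofibrant monoids along a cofibration has cofibrant target, which handles the inductive step. In the model category of monoids, a pushout square whose objects are cofibrant and whose one leg is a cofibration is a homotopy pushout, which gives the lemma.

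The main obstacle is the cofibrancy bookkeeping for the punctured-cube colimit $X^{(n)}_{-}$, together with the identification of the map with an iterated pushout-product. A secondary concern is that "homotopy pushout" should be read in monoids, where the transferred model structure needs the monoid axiom. If one wants the square to be a homotopy pushout in $\V$ instead, one would need the stronger fact that the forgetful functor preserves these particular homotopy pushouts, which I would not attempt.
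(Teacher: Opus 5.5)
Your proposal is correct and follows the same route as the paper. The paper observes that the left vertical map is a cofibration by the pushout-product axiom and that all three corners are cofibrant because $\iv$ is, so the pushout is a homotopy pushout; you supply the bookkeeping it leaves implicit (the transferred model structure on monoids, the identification of the map as $\iv \otimes \iota_2^{\boxtimes n} \otimes \iv$, Reedy cofibrancy of the punctured cube, and induction for cofibrancy of $M^{(n-1)}(C_1,C_2)$), though your aside that $X^{(n)}_{-}$ is cofibrant because it is the source of a cofibration with cofibrant target is not valid on its own, and the Reedy/latching argument is the one to keep.
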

\begin{proof}
	It suffices to verify that one of the maps is a cofibration, and that the target of the other is cofibrant. Indeed, all $3$ objects are cofibrant (by cofibrancy of $\iv$), and the vertical map is a cofibration, by the pushout product axiom.
\end{proof}
With this defined, we can finally define $M(C_1,C_2)$ to be the directed colimit of the $M^{(n)}(C_1,C_2)$. Our goal is now to define a map $\psi: M(C_1,C_2) \to \K_0$, and show it is an isomorphism.
We can inductively define maps $M^{(n)}(C_1,C_2) \to \K_0$ by defining maps in $\V$
$$\iv \otimes C_2 \otimes \cdots \otimes C_2 \otimes \iv  \to \K_0$$
compatible with the pushout diagram \eqref{diagram:homotopy-pushout-step}.
We define these via $\circ (a \otimes \iota_2 \otimes \cdots \otimes \iota_2 \otimes b)$ (i.e. using the composition operation in the category $\K$); verifying the compatibility is again straightforward.
Assembling these together yields a map of monoids $\psi: M(C_1,C_2) \to \K_0$. 
\par 
We will now define a morphism $\phi: \K_0 \to M(C_1,C_2)$ in $\V$ (that is, a morphism \textit{a priori} of objects, not necessarily of monoids) and show that $\phi$ and $\psi$ are inverse in $\V$. 
Since the forgetful functor from monoids in $\V$ to $\V$ reflects isomorphisms, it will then follow that $\psi$ is an isomorphism of monoids.
\par
First, let us introduce a little bit of notation: for $n > 0$, define $\chi_n : \iv \otimes C_2^{\otimes n} \otimes \iv \to M$ to be the adjoint to the map $\T(X^{(n)}) \to M^{(n)}(C_1,C_2) \to M(C_1,C_2)$. 
Define a map $\chi_{ab}: \iv \to M$ by the composition 
\begin{center}
\begin{tikzcd}[sep = small]
\iv \ar[r, "\sim"] 
	& \iv \otimes \iv \ar[r, "a \otimes b"]
	& \H[0,1] \otimes \H[1,0] \ar[r, "\circ "]
	& \H_0 \ar[r, hook]
	& M
\end{tikzcd}
\end{center} 
Denote by $\miv(\iv)$ the $\tiv$-module $\tiv \otimes \iv $, which has underlying object isomorphic to the underlying object of $\tiv$, i.e.
$$|\tiv| \cong |\miv| \cong \iv \coprod \iv^{\otimes^2} \coprod \cdots \cong \iv \coprod \iv \coprod \cdots$$ 

Recall that by \cite{BergerMoerdijk2012}, $\K_0$ is the directed colimit of objects $\K_0^{(i)}$, where the $\K_0^{(i)}$ are given inductively as pushouts in $\V$
\begin{center}
\begin{tikzcd}
Y^{(n)}_- \ar[d] \ar[r] 
	& \K_0^{(n-1)} \ar[d]
\\
Y^{(n)} \ar[r]
	& \K_0^{(n)}
\end{tikzcd}
\end{center}
Where $Y^{(n)} = \tiv \otimes C_2 \otimes \miv \otimes C_2 \otimes \cdots \otimes C_2 \otimes \tiv$, and $Y^{(n)}_-$ is the colimit over objects like $Y^{(n)}$ but with at least one $C_2$ replaced by $\iv \coprod \iv$.
Thus to define $\phi$, it suffices to define compatible maps $Y^{(n)} \to M(C_1,C_2)$. 
Note that as objects of $\V$, the $Y^{(n)}$ are given as disjoint unions of copies of $C_2^{\otimes n}$, since $\tiv$ and $\miv$ are each disjoint unions of copies of $\iv$.
In particular, this disjoint union is naturally indexed on $\mathbb{N}^{+} \times \mathbb{N} \times \cdots \times \mathbb{N} \times \mathbb{N}^+$. 
Thus, we need to define maps $\phi_{(j,k_1,...,k_n,\ell)}: C_2^{\otimes n} \to M(C_1,C_2)$,
for $(j,k_1,..., k_n, \ell) \in \N^+ \times \N \times \cdots \times \N \times \N^+$.
Define indices $\alpha_i$ by 
$$\alpha_i = \text{the }i\text{th nonzero entry of }(k_1,...,k_n)$$
Let $K$ be the number of nonzero entries of $(k_1,...,k_n)$. 
Then we define $\phi_{(j,k_1,...,k_n, \ell)}$ as the composition  
\begin{center}
\begin{tikzcd}
\iv 
	\otimes C_2^{\alpha_1} 
	\otimes \iv 
	\otimes C_2^{\alpha_2 - \alpha_1} 
	\otimes \iv 
	\otimes \cdots 
	\otimes C_2^{\alpha_{K} - \alpha_{K - 1}}
	\otimes \iv
\ar[d, "
	\chi_{ab}^{j - 1} 
	\otimes \chi_{\alpha_1} 
	\otimes \chi_{ab}^{k_{\alpha_1} - 1} 
	\otimes \chi_{\alpha_2 - \alpha_1}
	\otimes \chi_{ab}^{k_{\alpha_2} - 1} 
	\otimes \cdots 
	\otimes \chi_{ab}^{k_{\alpha_K} - 1}
	\otimes \chi_{ab}^{\ell - 1}
"]
\\
	M(C_1,C_2)
	\otimes M(C_1,C_2)
	\otimes M (C_1,C_2)
	\otimes M (C_1,C_2)
	\otimes M (C_1,C_2)
	\otimes \cdots 
	\otimes M(C_1,C_2)
	\otimes M(C_1,C_2)
\ar[d, "\mu "] \\
M(C_1,C_2)
\end{tikzcd}
\end{center}
where $\mu$ is the multiplication map of the monoid $M(C_1,C_2)$. 
It is easily (if somewhat tediously) verified that the various diagrams commute.
Thus, this defines our map $\phi$.
\begin{theorem}\label{thm:pushout-description-K0}
	The maps $\psi: M(C_1,C_2) \to \K_0$ and $\phi: \K_0 \to M(C_1,C_2)$ are inverse.
\end{theorem}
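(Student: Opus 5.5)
We will check the two composites $\phi\circ\psi$ and $\psi\circ\phi$ separately, in each case reducing to generators.

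\textbf{The composite $\phi\circ\psi$.} First we will show that $\phi\circ\psi = \id_{M(C_1,C_2)}$. Since $M(C_1,C_2)$ is the directed colimit of the $M^{(n)}(C_1,C_2)$, each of which is built by the pushouts \eqref{diagram:homotopy-pushout-step} of free monoids, $M(C_1,C_2)$ is generated as a monoid by the images of $\H_0$ and of the maps $\chi_n$. Since $\phi$ is a priori only a map in $\V$, we first check that it is multiplicative. Multiplication in $\K_0$ corresponds, on the summands $Y^{(n)}\otimes Y^{(m)}\to Y^{(n+m)}$, to concatenating index tuples: the last $\tiv$-factor of the first summand and the first $\tiv$-factor of the second merge into a $\miv$-factor. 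The formula for $\phi_{(j,k_1,\dots,k_n,\ell)}$ is visibly compatible with this concatenation, because it is defined by multiplying $\chi$'s in $M(C_1,C_2)$ in left-to-right order, and the exponents $j-1$, $\ell-1$ and $k_i-1$ add correctly, with one extra $\chi_{ab}$ absorbed. We also check that $\phi$ sends the unit summand to the unit. Once $\phi$ is known to be a monoid map, it suffices to check that $\phi\psi$ fixes the generators. On $\H_0\cong\tiv$, $\psi$ includes $\H_0$ as $Y^{(0)}$, and $\phi$ sends the summand indexed by $j$ to $\chi_{ab}^{j-1}$, which is the original element. On $\chi_n$, $\psi\chi_n = \circ(a\otimes\iota_2\otimes\cdots\otimes\iota_2\otimes b)$ lands in the summand of $Y^{(n)}$ indexed by $(1,0,\dots,0,1)$. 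That summand is sent by $\phi$ to $\chi_n$, since $K=0$ forces $\alpha$ to be empty, so the whole block is a single $\chi_n$.

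\textbf{The composite $\psi\circ\phi$.} Next we will show $\psi\circ\phi=\id_{\K_0}$. Here $\K_0=\colim_n \K_0^{(n)}$ by the Berger--Moerdijk filtration, so it suffices to check that the identity holds after precomposing with each summand $C_2^{\otimes n}\to Y^{(n)}\to\K_0$, for every index tuple. Apply $\psi$ to the defining formula of $\phi_{(j,k,\ell)}$. Since $\psi$ is a monoid map, this gives the composite in $\K$ of the following pieces: $(ab)^{j-1}$; the blocks $a\,\iota_2\cdots\iota_2\,b$; the powers $(ab)^{k-1}$ between blocks; and the final power $(ab)^{\ell-1}$. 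By associativity in $\K$, this rebracketing equals $\circ\bigl((ab)^{j-1}a\otimes\iota_2\otimes (ba)^{k_{\alpha_1}}\otimes\cdots\bigr)$, which is exactly the description of the summand inclusion of $Y^{(n)}$ into $\K_0$. The reason is that in $\H$ the factor $\tiv$ at position $0$ is the free monoid on $ab$, and the $\miv$ factors record powers of $ba$ in $\H_1$.

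\textbf{Main obstacle.} The substantive point is that the indexing convention for $\phi$ (zero versus nonzero $k_i$, and the $\alpha_i$) matches the Berger--Moerdijk identification of $Y^{(n)}$ with its summands. A $k_i=0$ means that no $ba$-loop is inserted, so adjacent $C_2$'s lie in a single $\chi$ block. A $k_i>0$ closes a block with $b$, inserts $(ab)^{k_i-1}$, and reopens with $a$, using $ba\cdot ba$ regrouping. We will do this bookkeeping carefully once, for general $n$. Compatibility with the $Y^{(n)}_-$ attaching maps, which the text already asserts, then ensures that both composites descend to the colimits.
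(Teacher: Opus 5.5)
Your proposal is correct and takes the paper's route: you check $\phi\circ\psi$ on the generating pieces ($M^{(0)}(C_1,C_2)$ and the $X^{(n)}$ via $\chi_n$), and you check $\psi\circ\phi$ on each $C_2^{\otimes n}$ summand of the $Y^{(n)}$ in the Berger--Moerdijk filtration. Your explicit check that $\phi$ is unital and multiplicative, via concatenation of index tuples, is a step the paper's sketch leaves implicit, and it is genuinely needed to justify reducing $\phi\circ\psi$ to monoid generators.
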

\begin{proof}
	Showing $\phi \circ \psi: M(C_1,C_2) \to M(C_1,C_2)$ is the identity boils down to analyzing the behaviour on the generating components $X^{(n)}$.
	Showing $\psi \circ \phi: \K_0 \to \K_0$ is the identity is equally straightforward, and involves checking that every $C_2^{\otimes n}$ component is mapped to itself via the identity.
\end{proof}
Finally, we can complete our initial goal of describing $\I(C_1,C_2)_0$ as a pushout: 
\begin{lemma}\label{lem:pushout-description-interval}
	$\I(C_1,C_2)_0$ is the pushout of monoids 
	\begin{center}
	\begin{tikzcd}
		\T(\iv \coprod \iv) \ar[d] \ar[r, "\id \coprod \chi_{ab} "]
			& M(C_1,C_2) \ar[d]
		\\
		\T(C_1) \ar[r]
			& \I(C_1,C_2)_0
	\end{tikzcd}
	\end{center}
	And this pushout is a homotopy pushout.
\end{lemma}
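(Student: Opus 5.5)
The plan is to compute $\I(C_1,C_2)_0$ from the second pushout in diagram \eqref{diagram:pushout-description-intervals}, which attaches $\T(C_1)$ to $\K$ along $\T(\iv \coprod \iv)$ at the object $0$. Since the attaching data only involves endomorphisms of $0$, I expect the endomorphism monoid of the pushout to be the corresponding pushout of monoids. First, I would verify this directly through universal properties. A map of monoids out of the claimed pushout $P$ into a monoid $N$ amounts to three pieces of data: a monoid map $\K_0 \to N$; a map $C_1 \to N$ in $\V$; and compatibility on $\iv \coprod \iv$, namely that the two ends of $C_1$ go to the unit and to the image of $a \otimes b$. Here Theorem \ref{thm:pushout-description-K0} supplies the identification $\K_0 \cong M(C_1,C_2)$, under which the map $a\otimes b$ corresponds to $\chi_{ab}$ (by the definition of $\chi_{ab}$ and $\psi$). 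This is exactly why the top horizontal map is $\id \coprod \chi_{ab}$.

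To make the identification with $\I(C_1,C_2)_0$ rigorous, I would work in $\vcat_{\{0,1\}}$. There I would use the Berger–Moerdijk analysis of pushouts along a free attaching of an endomorphism, cited in \cite[Section 3]{BergerMoerdijk2012}, exactly as was done for $\K_0$. That analysis expresses $\I(C_1,C_2)_0$ as a filtered colimit of pushouts in $\V$. The generic term has the form $\K_0 \otimes C_1 \otimes \K_0 \otimes \cdots \otimes C_1 \otimes \K_0$, attached along the latching object in which at least one $C_1$ is replaced by $\iv \coprod \iv$. This filtration is precisely the standard filtration, in the sense of \cite{SchwedeShipley1998}, of the monoid pushout $\K_0 \amalg_{\T(\iv\coprod\iv)} \T(C_1)$. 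The point that makes the two filtrations agree is that a composite in $\I(C_1,C_2)$ from $0$ to $0$ which passes through $1$ is already accounted for inside $\K_0$. Morphisms leaving $0$ only see $C_1$ via composition at $0$, so the new cells are words alternating between $\K_0$ and $C_1$. I would build comparison maps in both directions, as was done with $\phi$ and $\psi$, and check that they are inverse on generators. Alternatively, and more cleanly, I would check that for any monoid $N$, maps into $N$ from either side correspond bijectively. For $\I(C_1,C_2)_0$ one would use the $\V$-category $\I(C_1,C_2)$ restricted along $\{0\}$, via the full-subcategory inclusion and the left adjoint picture. I expect this identification step to be the main obstacle, because the universal property of $\I(C_1,C_2)$ concerns functors out of a two-object category, not monoid maps out of $\I_0$. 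I would therefore lean on the explicit filtration rather than on a pure universal-property argument.

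For the homotopy pushout claim, I would argue exactly as in Lemma \ref{lem:homotopy-pushout-monoid}. The left vertical map $\T(\iv\coprod\iv) \to \T(C_1)$ is $\T$ applied to a cofibration, so it is a cofibration of monoids. The source $\T(\iv \coprod \iv)$ is cofibrant because $\iv$ is cofibrant. The object $M(C_1,C_2)$ is cofibrant as a monoid, being a transfinite composite of pushouts of the cofibrations $\T(X^{(n)}_-) \to \T(X^{(n)})$ starting from the cofibrant $\tiv$, with the $X^{(n)}_- \to X^{(n)}$ cofibrations by the pushout product axiom. A pushout of cofibrant monoids along a cofibration is then a homotopy pushout in the model category of monoids (left properness among cofibrant objects). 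This uses the transferred model structure on monoids, which is available by the monoid axiom assumed for the Dwyer–Kan structure.
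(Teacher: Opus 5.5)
Your proposal is correct and follows essentially the same route as the paper. You identify $\I(C_1,C_2)_0$ through the Berger--Moerdijk description of endomorphism objects of a free extension at the object $0$ (whose filtration matches that of the monoid pushout) and combine this with the isomorphism $\psi\colon M(C_1,C_2)\cong\K_0$ of Theorem \ref{thm:pushout-description-K0}, under which $a\otimes b$ corresponds to $\chi_{ab}$; the homotopy pushout claim then follows, exactly as in the paper, from the left vertical map being a cofibration with all three objects cofibrant, and you simply supply more detail than the paper's brief argument.
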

\begin{proof}
	That this diagram is a pushout follows from the description of $\I(C_1,C_2)$ as a pushout, the description of monoids of pushouts as given in \cite{BergerMoerdijk2012}, and the fact that $\psi: M(C_1,C_2) \to \K_0$ is an isomorphism.
	That it is furthermore a homotopy pushout is a consequence of the left vertical arrow being a cofibration, along with cofibrancy of all $3$ objects.
\end{proof}
\subsection{Images of generating intervals under $\lcat$}
For $C_1$ and $C_2$ cylinders for the unit $\iv$, the previous subsection gives us a formula for $\I(C_1,C_2)$ as a pushout of monoids, and hence allows us to compute $\lcat(\I(C_1,C_2)_0)$. 
We will need to know whether the map $\lcat(\I(C_1,C_2)_0) \to \left(\lcat\I(C_1,C_2)\right)_0$ is a weak equivalence, and hence we would like an understanding of $\left(\lcat\I(C_1,C_2)\right)_0$. Rather than repeat the analysis of the previous subsection, we will describe how to alter it to obtain the desired description.
\par 
Since $\lcat$ preserves pushouts, the object $\lcat\I(C_1,C_2)$ is given as a pushout given by applying $\lcat$ to diagram \eqref{diagram:pushout-description-intervals}:
\begin{equation}\label{diagram:pushout-description-L-intervals}
\begin{tikzcd}[sep = large]
	\T(L(\iv) \coprod L(\iv) ) \ar[d] \ar[r, "\tilde{\id}_1 \coprod \tilde{(b\otimes a)}"]
		& \lcat\H \ar[d]
	\\
	\T(L(C_2)) \ar[r]
		& \lcat\K
\end{tikzcd}
\quad \quad \quad 
\begin{tikzcd}
	\T(L(\iv) \coprod L(\iv)) \ar[d] \ar[r, "\tilde{\id}_0 \coprod \widetilde{a\otimes b}"]
		& \K \ar[d]
	\\
	\T(L(C_1)) \ar[r]
		& \lcat\I(C_1,C_2)
\end{tikzcd}
\end{equation}
The top horizontal maps in each diagram are somewhat non-obvious: the maps $\tilde{\id}_1$ and $\tilde{\id}_0$ are
induced by the colax structure maps $L(\iv) \to \iw$, e.g. $\tilde{\id}_1$ is adjoint to the composition 
\begin{center}
\begin{tikzcd}
	L(\iv) 
		\ar[r]
	& 
	\iw 
		\ar[r, "\id_1"]
	& 
	\H_1
\end{tikzcd}
\end{center}
where $\id_0$ is the identity structure map for the monoid $\H_0$. 
\par 
The maps $\tilde{a\otimes b}$ and $\tilde{b \otimes a}$ are induced using the isomorphism $\iv \to (\iv \otimes \iv)$ and Lemma \ref{lem:join-of-intervals-is-colax-structure-map}; e.g. $\tilde{b \otimes a}$ is the composition 
\begin{center}
\begin{tikzcd}
	L(\iv) 
		\ar[r, "\sim"]
	& 
	L(\iv \otimes \iv)
		\ar[r]
	& 
	L(\iv) \otimes L(\iv)
		\ar[r, "b \otimes a"]
	&
	\lcat\H[1,0] \otimes \lcat\H[0,1]
		\ar[r, "\circ"]
	& \H_1
\end{tikzcd}
\end{center}
where the maps $a: L(\ov) \to \H[0,1]$ and $b: L(\iv) \to \lcat\H[1,0]$ are given by applying $\lcat$ to the defining maps $\sus \iv \to \H$.
\par 
Note that since $\iv$ is cofibrant and $L$ is left Quillen, $L(\iv)$, $L(C_1)$, and $L(C_2)$ are all cofibrant. 
The same constructions as in the previous section give us a monoid in $\W$, $M(L(C_1),L(C_2))$, which is given as  a directed colimit of monoids $M^{(n)}(L(C_1),L(C_2))$ which fit into pushout diagrams
\begin{equation}
\begin{tikzcd}
\T(\tilde{X}^{(n)}_{-}) \ar[d] \ar[r] 
	& M^{(n-1)}(L(C_1),L(C_2)) \ar[d]
\\
\T\left(\tilde{X}^{(n)} \right)\ar[r]
	& M^{(n)}(L(C_1),L(C_2))
\end{tikzcd}
\end{equation}
where $\tilde{X}^{(n)}$ is the object 
$$L(\iv) \ow  L(C_2) \ow \cdots \ow L(C_2) \ow L(\iv)$$
and $\tilde{X}^{(n)}_{-}$ is the colimit of an $n$-dimensional cube of similar objects with $L(C_2)$ entries replaced by $L(\iv)$. 
Applying the same analyses as in the previous section, we deduce that the above diagram is also a homotopy pushout, and obtain:
\begin{lemma}\label{lem:pushout-description-L-interval}
	$\left(\lcat \I(C_1,C_2)\right)_0$ is the pushout of monoids 
	\begin{center}
	\begin{tikzcd}
		\T(L(\iv) \coprod L(\iv)) \ar[d] \ar[r]
			& M(L(C_1), L(C_2)) \ar[d]
		\\
		\T(L(C_1)) \ar[r]
			& \left(\lcat \I(C_1,C_2)\right)_0
	\end{tikzcd}
	\end{center}
	And this pushout is a homotopy pushout.
\end{lemma}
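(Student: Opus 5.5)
The plan is to transport the proof of Lemma \ref{lem:pushout-description-interval}, together with Theorem \ref{thm:pushout-description-K0} and Lemma \ref{lem:homotopy-pushout-monoid}, to $\W$. Everything needed is already in hand: $\lcat\I(C_1,C_2)$ is the iterated pushout \eqref{diagram:pushout-description-L-intervals}, and the three objects $L(\iv)$, $L(C_1)$, $L(C_2)$ are cofibrant.

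\textbf{Step 1: identify $\lcat\H$.} Since $\lcat$ preserves colimits and object sets, $\lcat\H \cong \sus L(\iv) \amalg (\sus L(\iv))^{\mathrm{op}}$ by Lemma \ref{lem:catsus-preserved-by-adjoint}, with the coproduct taken in $\wcat_{\{0,1\}}$. Its endomorphism monoid $(\lcat\H)_0$ is therefore $\T(L(\iv)\ow L(\iv))$. This is the $\W$-analogue of $\H_0 \cong \T\iv$, except that $L(\iv)$ need not be the unit, so the words carry a $L(\iv)\ow L(\iv)$ factor for each passage $a$ then $b$.

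\textbf{Step 2: redo the $\K$-stage.} Define $\tilde{X}^{(n)}_P$ exactly as before, but with outer factors $L(\iv)$ and inner factors $L(C_2)$ or $L(\iv)\amalg L(\iv)$. The attaching maps are defined by the same recursion, except that:
\begin{itemize}
\item the summand previously sent to the identity is sent through $\tilde{\id}_1$, which uses the colax unit map $L(\iv)\to\iw$;
\item the summand previously sent to $ab$ is sent through $\widetilde{b\otimes a}$, whose form is given by Lemma \ref{lem:join-of-intervals-is-colax-structure-map}.
\end{itemize}
This produces $M(L(C_1),L(C_2))$ together with a comparison map $\psi\colon M(L(C_1),L(C_2)) \to (\lcat\K)_0$, obtained by composing generators in $\lcat\K$.

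For the inverse $\phi$, use the Berger–Moerdijk filtration of $(\lcat\K)_0$ by the objects $\tilde Y^{(n)}$. These are now coproducts of tensor words in $L(C_2)$ and $L(\iv)$. The earlier argument relied on $\iv$ being the unit, which let the $\T\iv$ and $\miv$ factors be discarded as copies of $\iv$; here those factors are genuine $L(\iv)$ factors. They are absorbed by mapping each maximal run of $a,b$ letters through $\chi_{ab}$ and multiplying in $M$. The identities $\phi\psi=\id$ and $\psi\phi=\id$ are then checked on generating components, just as in Theorem \ref{thm:pushout-description-K0}.

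\textbf{Step 3: the final pushout and homotopy invariance.} The second square of \eqref{diagram:pushout-description-L-intervals} only modifies the endomorphisms of $0$. Berger–Moerdijk's description of pushouts along free monoid maps at an object therefore gives $(\lcat\I(C_1,C_2))_0$ as the stated pushout of monoids along $\T(L\iv\amalg L\iv)\to\T(L(C_1))$.

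For homotopy invariance, $L(\iv)\amalg L(\iv)\to L(C_1)$ is a cofibration because $L$ is left Quillen. Hence $\T$ of it is a cofibration of monoids, so the left vertical map is a cofibration. The monoids $\T(L\iv\amalg L\iv)$ and $\T(LC_1)$ are cofibrant. For $M(L(C_1),L(C_2))$, each stage $M^{(n)}$ is cofibrant by induction: the cube-colimit maps $\tilde X^{(n)}_-\to\tilde X^{(n)}$ are iterated pushout-products of the cofibrations $L(\iv)\amalg L(\iv)\to L(C_2)$, tensored with the cofibrant object $L(\iv)$, so they are cofibrations by the pushout-product axiom. A directed colimit of cofibrations between cofibrant objects is then cofibrant.

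\textbf{Main obstacle.} I expect the main difficulty to be Step 2: checking that $\phi$ is well defined when $L(\iv)$ is not the unit. In the earlier argument, the unit isomorphisms silently collapsed the $\tiv$ and $\miv$ factors. Here the index bookkeeping has to track the actual $L(\iv)$ tensor factors and the colax maps appearing in $\tilde{\id}$. I would handle this first by writing $\tilde Y^{(n)}$ explicitly as a coproduct of alternating words in $L(\iv)$ and $L(C_2)$, and then matching each word with a product of $\chi$'s in $M$.
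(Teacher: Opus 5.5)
Your proposal is correct and follows the paper's route. The paper proves this lemma only by saying that the constructions and analyses of the previous subsection (Theorem~\ref{thm:pushout-description-K0}, Lemmas~\ref{lem:homotopy-pushout-monoid} and~\ref{lem:pushout-description-interval}) carry over to $\W$ with the cofibrant objects $L(\iv)$, $L(C_1)$, $L(C_2)$; you make that transport explicit, and you rightly note that the unit isomorphisms used in $\V$ are replaced by the colax maps $L(\iv)\to\iw$ and $L(\iv)\to L(\iv)\ow L(\iv)$. One bookkeeping correction to your definition of $\phi$: the $a$ and $b$ letters directly adjacent to an $L(C_2)$ letter belong to the generator $\chi_m$, so only the interior $ab$ pairs of each run go through $\chi_{ab}$, exactly as in the paper's formula.
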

\subsection{Comparison of endomorphism monoids}
Given an interval $\I(C_1,C_2)$, we have the inclusion $\I(C_1,C_2)_0 \hookrightarrow \I(C_1,C_2)$ of the full subcategory on the object $0$. 
Applying $\lcat$ to this, we obtain (by definition of morphisms in $\wcat$)
$$\lcat\left(\I(C_1,C_2)_0\right) \to \left(\lcat \I(C_1,C_2) \right)_0 \hookrightarrow \lcat \I(C_1,C_2)$$
Our goal in this subsection is to gain a better understanding of this morphism.
\begin{lemma}\label{lem:last-pushout-comparison-step}
	The morphism 
	$$\lcat\left(\I(C_1,C_2)_0\right) \to \left(\lcat \I(C_1,C_2) \right)_0$$
	is induced as a morphism of pushouts via a transformation of diagrams 
	\begin{equation}\label{eqn:diagram-transformation-last-step}
	\begin{tikzcd}
	\T\left(L(\iv) \coprod L(\iv) \right) \ar[d] \ar[r] \ar[rrr, bend left = 15 ]
		& \lcat(\K_0)
		& & 
		\T(L(\iv) \coprod L(\iv)) \ar[d] \ar[r] 
		& (\lcat \K)_0 \arrow[from=lll, bend left = 15, crossing over]
	\\
	\T(L(C_1))	\ar[rrr, bend left = 15]
		& & &
	\T(L(C_1))	
	\end{tikzcd}
	\end{equation}	
	in which the arrow $\lcat(\K_0) \to (\lcat \K)_0$ is induced by applying $\lcat$ to the inclusion $\K_0 \hookrightarrow \K$, and the other two horizontal arrows are identities. 
\end{lemma}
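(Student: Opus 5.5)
We identify both sides of the morphism as pushouts of monoids in $\W$ and check that the comparison map is the one induced by the displayed transformation of diagrams. For the left-hand side, Lemma \ref{lem:pushout-description-interval} (using $\psi: M(C_1,C_2)\cong \K_0$ from Theorem \ref{thm:pushout-description-K0}) exhibits $\I(C_1,C_2)_0$ as the pushout of monoids of $\T(C_1) \leftarrow \T(\iv\coprod\iv) \to \K_0$. Regarding monoids as one-object $\V$-categories, $\lcat$ is a left adjoint that preserves the object set. Hence it preserves pushouts in $\vcat$ of one-object categories, and these are computed in $\vcat_{\{0\}}$, which is the category of monoids. Also $\lcat\T(X) \cong \T(LX)$: both corepresent the same functor, since maps out of either into a one-object $\W$-category $\D$ correspond to maps $X \to R(\D_0)$. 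So $\lcat(\I(C_1,C_2)_0)$ is the pushout of $\T(L(C_1)) \leftarrow \T(L\iv\coprod L\iv) \to \lcat(\K_0)$, which is the left diagram of \eqref{eqn:diagram-transformation-last-step}.

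For the right-hand side, we first establish the analogue of Lemma \ref{lem:pushout-description-interval} for $\lcat\I(C_1,C_2)$. The second pushout in \eqref{diagram:pushout-description-L-intervals}, together with the Berger--Moerdijk description of endomorphism monoids of a pushout in $\vcat_{\{0,1\}}$ along a map $\T(-)$ attached at the object $0$, shows that $(\lcat\I(C_1,C_2))_0$ is the pushout of monoids of $\T(L(C_1)) \leftarrow \T(L\iv\coprod L\iv) \to (\lcat\K)_0$. The point is that attaching generators at $0$ modifies the endomorphism monoid of $0$ exactly by the monoid pushout, which is the same reasoning used for Lemma \ref{lem:pushout-description-interval}. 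The top map here is $\tilde{\id}_0\coprod\widetilde{a\otimes b}$, which factors through $(\lcat\K)_0$.

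Next we verify naturality. The functor $\lcat(\I(C_1,C_2)_0) \to (\lcat\I(C_1,C_2))_0$ comes from $\lcat$ applied to the inclusion of the full subcategory on $0$. The inclusion $\I(C_1,C_2)_0 \hookrightarrow \I(C_1,C_2)$ is itself induced from the pushout square: it restricts to $\K_0 \hookrightarrow \K$ on the corner, is the identity on $\T(C_1)$, and is the identity on $\T(\iv\coprod\iv)$. By functoriality of $\lcat$, the induced comparison restricts to $\lcat(\K_0)\to(\lcat\K)_0$ on the corner and to identities on the $\T(L C_1)$ and $\T(L\iv\coprod L\iv)$ terms. Since a map out of a pushout is determined by its restrictions to the corners, this identifies the morphism with the map of pushouts induced by \eqref{eqn:diagram-transformation-last-step}.

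The main obstacle is the compatibility of the top maps. We must check that the composite $\T(L\iv\coprod L\iv) \to \lcat(\K_0) \to (\lcat\K)_0$ equals $\tilde{\id}_0\coprod\widetilde{a\otimes b}$. The identity component should follow from the $n=0$ case of Lemma \ref{lem:join-of-intervals-is-colax-structure-map}, since both the $\lcat$-image of the unit and $\tilde{\id}_0$ involve the colax unit map $L\iv\to\iw$. For the $a\otimes b$ component, we pass to adjoints in $\vcat$: both maps are adjoint to $\iv \to \K_0 \to R((\lcat\K)_0)$, built from the unit of the adjunction followed by the lax structure of $R$, as in Lemma \ref{lem:join-of-intervals-is-colax-structure-map}. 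This reduces the check to the definition of the colax structure map.
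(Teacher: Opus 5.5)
Your proposal is correct and follows the same route as the paper. The paper's proof only appeals to the Berger--Moerdijk analysis of endomorphism monoids of pushouts; you make explicit what it leaves implicit ($\lcat$ preserving pushouts of one-object categories, $\lcat\T(X)\cong\T(LX)$, and naturality of the comparison map), and the step you call the main obstacle is automatic, since by functoriality the composite $\T(L\iv\coprod L\iv)\to\lcat(\K_0)\to(\lcat\K)_0$ is just $\lcat$ applied to the original attaching map $\T(\iv\coprod\iv)\to\K$, which is by definition the attaching map used to form $\lcat\I(C_1,C_2)$, so no adjunction computation is needed.
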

\begin{proof}
	This is fairly straightforward by applying the analysis in \cite[Section 3.8]{BergerMoerdijk2012}. 
\end{proof}
The only remaining piece is to understand the morphism $\lcat(\K_0) \to (\lcat \K)_0$. 
Lemmas \ref{lem:pushout-description-interval} and \ref{lem:pushout-description-L-interval} give us a description of both $\lcat(\K_0)$ and $(\lcat \K)_0$ as pushouts of monoids in $\W$.
The following lemma will tell us that the map between them is induced by the ``obvious'' transformation of pushout diagrams.
\begin{lemma}\label{lem:monoid-pushout-transformation}
Let $f: \lcat M(C_1,C_2) \to M(L(C_1),L(C_2))$ be such that the diagram 
\begin{center}
\begin{tikzcd}
	\lcat M(C_1,C_2)
		\ar[r, "f"]
		\ar[d, "\sim" labl]
	& 
	M(L(C_1), L(C_2))
		\ar[d, "\sim" labl]
	\\
	\lcat(\K_0) 	
		\ar[r]
	&
	(\lcat \K)_0
\end{tikzcd}
\end{center}
commutes, where the left vertical arrow is the isomorphism of Theorem \ref{thm:pushout-description-K0}, and the right vertical arrow is similar. 
Then $f$ is induced by maps $f_n: \lcat M^{(n)}(C_1,C_2) \to M^{(n)}(L(C_1),L(C_2))$ which themselves are given inductively by natural transformations of diagrams
\begin{equation}\label{eqn:diagram-transformation}
\begin{tikzcd}
\T(L(X^{(n)}_{-})) \ar[d] \ar[r] \ar[rrr, bend left = 15 ]
	& \lcat M^{(n-1)}(C_1,C_2) 
	& & 
	\T(\tilde{X}^{(n)}_{-}) \ar[d] \ar[r] 
	& M^{(n-1)}(L(C_1),L(C_2)) \arrow[from=lll, bend left = 15, crossing over, "f_{n-1}"]
\\
\T\left(L(X^{(n)} )\right)	\ar[rrr, bend left = 15]
	& & &
\T\left(\tilde{X}^{(n)} \right)	
\end{tikzcd}
\end{equation}	
In which the arrows $\T(L(X^{(n)}_{-})) \to \T(\tilde{X}^{(n)}_{-})$ and $\T(L(X^{(n)})) \to \T(\tilde{X}^{(n)})$ are induced by the colax structure maps of $L$.
\end{lemma}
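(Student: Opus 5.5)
The plan is to build the maps $f_n$ by induction on $n$, check that they are compatible with the transition maps $M^{(n-1)} \to M^{(n)}$ on both sides, and pass to the directed colimit. Since $\lcat$ is a left adjoint it preserves directed colimits and pushouts. Its restriction to one-object categories is the left adjoint to $R$ applied to monoids, so $\lcat \T(Y) \cong \T(L Y)$. Hence $\lcat M(C_1,C_2)$ is the directed colimit of the $\lcat M^{(n)}(C_1,C_2)$, and each of these is the pushout of the diagram obtained by applying $\lcat$ to \eqref{diagram:homotopy-pushout-step}:
\[
\T(L(X^{(n)}_{-})) \to \lcat M^{(n-1)}(C_1,C_2), \qquad \T(L(X^{(n)}_{-})) \to \T(L(X^{(n)})).
\]
Here I use that $L$ also commutes with the cube colimit defining $X^{(n)}_-$.

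The induction goes as follows. For $n=0$, $\lcat \H_0 = \lcat \tiv \cong \T(L\iv)$, while $M^{(0)}(L(C_1),L(C_2)) = (\lcat \H)_0$. I take $f_0$ to be the map induced by the inclusion $\H_0 \hookrightarrow \H$, exactly as in Lemma \ref{lem:last-pushout-comparison-step}. For $n>0$, I need the transformation of spans \eqref{eqn:diagram-transformation}. The vertical legs $L(X^{(n)}) \to \tilde X^{(n)}$ and $L(X^{(n)}_P) \to \tilde X^{(n)}_P$ are the iterated colax structure maps
\[
L(\iv \otimes A_1 \otimes \cdots \otimes A_n \otimes \iv) \to L\iv \ow LA_1 \ow \cdots \ow LA_n \ow L\iv,
\]
composed with the canonical comparison $L(\iv \amalg \iv) \cong L\iv \amalg L\iv$. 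These maps are natural in the $A_i$, so they assemble into a map of cubes and hence a map on the colimit $X^{(n)}_-$.

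The substantive check is that the square with the top arrows, $\T(L(X^{(n)}_-)) \to \lcat M^{(n-1)}$ followed by $f_{n-1}$, versus $\T(\tilde X^{(n)}_-) \to M^{(n-1)}(L(C_1),L(C_2))$, commutes. I would do this summand by summand over $P \not\ni i$, using the two components of the decomposition of $X^{(n)}_P$. On the first component, the one identified with $X^{(n-1)}_{d_i P}$, it follows from the inductive hypothesis together with the unit isomorphism. On the second component the attaching map is a product of two lower generators via multiplication. Here I need $f_{n-1}$ to be a monoid map and the colax structure to be coassociative, so that the colax map for $X^{(i)} \otimes X^{(n-i)}$ factors through those for the two halves. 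At the base case $n=1$ the attaching map involves $a\otimes(\id \amalg ab)\otimes b$. This is matched with $\tilde{\id}$ and $\widetilde{a\otimes b}$ using Lemma \ref{lem:join-of-intervals-is-colax-structure-map}, which identifies the effect of $\lcat$ on composites with colax structure maps, including the $n=0$ case for the unit.

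Finally I show that the colimit map so constructed is the given $f$. Transport along the isomorphisms of Theorem \ref{thm:pushout-description-K0} (the map $\psi$ and its analogue for $L$). The composite $\T(L X^{(n)}) \to \lcat M \to \lcat(\K_0) \to (\lcat\K)_0$ is adjoint to $X^{(n)} \to R((\lcat\K)_0)$. Unwinding, it is $\circ(a\otimes\iota_2\otimes\cdots\otimes b)$ pushed through $L$. By Lemma \ref{lem:join-of-intervals-is-colax-structure-map}, this equals the colax structure map followed by the $\W$-composition $\circ(La\otimes L\iota_2\otimes\cdots\otimes Lb)$, which is exactly $\psi_L$ applied to the colax image. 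A map out of the colimit of pushouts of free monoids is determined by its values on the generators $L X^{(n)}$ and on $\lcat M^{(0)}$, so the two maps agree and $f$ is induced by the $f_n$. I expect the main obstacle to be the bookkeeping in the compatibility check on $X^{(n)}_-$: verifying that the colax maps commute with the recursive splitting into products of lower generators. This is where the associativity and unitality coherence of the colax structure on $L$ must be invoked carefully.
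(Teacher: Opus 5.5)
Your proposal is correct. Its decisive step is the paper's own argument. The paper reduces the lemma to commutativity of the squares $L(X^{(n)}_P)\to\tilde X^{(n)}_P$ over $\lcat M(C_1,C_2)\to M(L(C_1),L(C_2))$. It then verifies these squares by applying $\lcat$ to the square formed by $\catsus(\iv\ov C_2\ov C_2\ov\iv)\to\catsus(\iv,C_2,C_2,\iv)$ over $M(C_1,C_2)\to\I(C_1,C_2)$, and invoking Lemma \ref{lem:join-of-intervals-is-colax-structure-map}. That is exactly your final paragraph: identify $f$ on the generators $LX^{(n)}$ and on $\lcat M^{(0)}$ via the adjunction and the colax structure map.

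The difference is what happens at the finite stages. The paper checks these squares only after mapping into the colimit $M(L(C_1),L(C_2))$. It treats every $P$ uniformly through $\catsus$ of the list $(\iv,A_1,\dots,A_n,\iv)$, so all colax coherence is absorbed into Lemma \ref{lem:join-of-intervals-is-colax-structure-map}. It never separately verifies that the attaching squares commute already in $M^{(n-1)}(L(C_1),L(C_2))$. That stagewise commutativity is what is needed for the maps $f_n$ to exist as maps of pushouts, unless one knows the transition maps into the colimit are monic.

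Your inductive construction supplies this, at the cost of more bookkeeping. It checks summand by summand, using that $f_{n-1}$ is a monoid map and that the colax structure is coassociative.

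One precision. On the first summand, the $\W$-side object $\tilde X^{(n)}_P$ contains a genuine $L\iv$ factor, not $\iw$. So the $\W$-side attaching map passes through the colax counit $L\iv\to\iw$ (the map $\tilde{\id}_1$). What you need there is therefore the counitality axiom of the colax structure, not merely the unit isomorphism in $\V$. You do flag unitality coherence at the end, so this is a matter of stating it in the right place.
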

\begin{proof}
	The claim boils down to saying that for any $P \subseteq \n$, we have a commutative diagram 
	\begin{equation}\label{diagram:lax-pieces}
	\begin{tikzcd}
		L(X_{P}^{(n)}) 
			\ar[d] 
			\ar[r]
		&
		\tilde{X}^{(n)}_P
			\ar[d]
		\\
		\lcat M(C_1, C_2) 
			\ar[r]
		& 
		M(L(C_1), L(C_2))
	\end{tikzcd}
	\end{equation}
	In which the arrow $\lcat M(C_1,C_2) \to M(L(C_1), L(C_2))$ is such that the diagram
	\begin{center}
	\begin{tikzcd}
		\lcat M(C_1, C_2) 
			\ar[d, "\sim" labl]
			\ar[r]
		& 
		M(L(C_1),L(C_2)) 
			\ar[d, "\sim" labl]
		\\
		\lcat\left(\I(C_1,C_2)_0\right) 
			\ar[r] 
		& 
		\left(\lcat\I(C_1,C_2)\right)_0
	\end{tikzcd}
	\end{center}
	commutes. 
	The vertical arrows of diagram \eqref{diagram:lax-pieces} are those coming from the constructions of $M(C_1,C_2)$ and $M(L(C_1),L(C_2))$.
	\par 
	We show that diagram \eqref{diagram:lax-pieces} commutes in the case $n = 2$, $P = [2]$ (the other cases follow from the same logic, with more involved notation). In this case, we are showing that the diagram   
	\begin{equation}\label{diagram:lax-pieces-dim-2}
	\begin{tikzcd}
		L(\iv \ov C_2 \ov C_2 \ov \iv) 
			\ar[d] 
			\ar[r]
		&
		L(\iv) \ow L(C_2) \ow L(C_2) \ow L(\iv) 
			\ar[d]
		\\
		M(C_1, C_2) 
			\ar[r]
		& 
		M(L(C_1), L(C_2))
	\end{tikzcd}
	\end{equation}
	 commutes. 
	 The key is that we have a commutative diagram 
	 \begin{equation}
	 \begin{tikzcd}
	 	\sus(\iv \ov C_2 \ov C_2 \ov \iv) 
	 		\ar[r] 
	 		\ar[d]
		&	 	
	 	\sus(\iv, C_2, C_2, \iv)
	 		\ar[d]
	 	\\
	 	M(C_1,C_2) \ar[r]
	 	&
	 	\I(C_1,C_2)
	 \end{tikzcd}
	 \end{equation}
	 in which the left arrow comes from the construction of $M(C_1,C_2)$, and the right arrow is defined by sending:
	 	\begin{itemize}
	 		\item the first $\iv$ along the defining map $\iv \to \I(C_1,C_2)(0,1)$,
	 		\item both copies of $C_2$ along the defining map $C_2 \to \I(C_1,C_2)_1$,
	 		\item the second $\iv$ along the defining map $\iv \to \I(C_1,C_2)(1,0)$
		\end{itemize}	 		 
	 given by the map $\iv \ov C_2 \ov C_2 \ov \iv \to \I(C_1,C_2)$ which (in part) defines the isomorphism $M(C_1,C_2) \cong \I(C_1,C_2)_0$. 
	 Commutativity of this diagram comes from the definition of the isomorphism $M(C_1,C_2) \to \I(C_1,C_2)_0$.
	 Applying the functor $\lcat$ to this diagram, we obtain a diagram 
	 \begin{equation}
	 \begin{tikzcd}
	 	\sus(L(\iv \ov C_2 \ov C_2 \ov \iv)) 
	 		\ar[r] 
	 		\ar[dd]
	 	&
	 	\sus(L(\iv) \ow L(C_2) \ow L(C_2) \ow L(\iv)) 
	 		\ar[d]
		\\
		&
	 	\sus(L(\iv), L(C_2), L(C_2), L(\iv))
	 		\ar[d]
	 	\\
	 	L(M(C_1,C_2)) \ar[r]
	 	&
	 	L(\I(C_1,C_2))
	 \end{tikzcd}
	 \end{equation}
	 In which the top arrow is given by the colax structure map for $L$ by Lemma \ref{lem:join-of-intervals-is-colax-structure-map}. 
	 The result now follows from commutativity of this diagram. 
	 \par 
	 For general $n$ and $P$, the logic is identitical, though the notation is more cumbersome.
\end{proof}

\subsection{Change of enrichment Theorem \ref{alphathm:complicated}}
We are nearly ready to state and prove our second theorem on change of enrichment, which is a great deal more technical (though less restrictive) than our first. 
First, we need one definition: 
\begin{definition}[Unit Cylinder Axiom]\label{dfn:unit-cylinder-axiom}
	Let $\V$ be a monoidal model category with cofibrant unit $\iv$. 
	We say $\V$ satisfies the \emph{unit cylinder axiom} if there exists a set $\mathbb{C}$ of cylinder inclusions $\iota_i: \iv \coprod \iv \to \Cyl_i(\iv)$, with each $\iota_i$ a cofibration, such that for any object $X \in \V$:
	\begin{itemize}
		\item Any map $f: \iv \to X$ in the homotopy category of $\V$ lifts to a map $\iv \to X$ in $\V$.
		\item If two maps $f,g : \iv \to X$ represent the same map in $\Ho(\V)$, then there is a left homotopy from $f$ to $g$ along an element of $\mathbb{C}$: that is, there exists some $\iota_i: \iv \coprod \iv \to \Cyl_i(\iv)$ in $\mathbb{C}$ which fits into a commutative diagram
		\begin{center}
		\begin{tikzcd}
			\iv \coprod \iv \ar[d, "\iota_i"] \ar[r, "f \coprod g"] 
				& Y \\
			\Cyl_i(\iv) \ar[ru]
		\end{tikzcd}
		\end{center}
	\end{itemize}
\end{definition}
\begin{example}
	If all objects of $\V$ are fibrant, then $\V$ satisfies the unit cylinder axiom with $\mathbb{C}$ any cylinder object for $\iv$.
\end{example}
\begin{example}
	$\sset$ satisfies the unit cylinder axiom: the set $\mathbb{C}$ consists of those simplicial sets with geometric realization isomorphic to a line segment. 
\end{example}
The purpose of this axiom is the following consequence, which is immediate from the definitions:
\begin{lemma}\label{lem:generating-intervals-description}
	Let $\V$ be a monoidal model category with cofibrant unit $\iv$.
	If $\V$ satisfies the unit cylinder axiom (Definition \ref{dfn:unit-cylinder-axiom}) with set $\mathbb{C}$ of cylinder inclusions, then the set $\{ \mathbb{I}(C_1,C_2)\}_{C_2,C_2 \in \mathbb{C}}$ is generating in the sense of \cite[Definition 4.10]{Muro2012}. 
\end{lemma}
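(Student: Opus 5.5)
The plan is to unwind \cite[Definition 4.10]{Muro2012} and check it directly against the universal property of the pushouts \eqref{diagram:pushout-description-intervals} defining $\I(C_1,C_2)$. I am recalling that definition rather than quoting it, and I take it to ask for two things. First, each member of the set is a cofibrant interval. Second, for every $\V$-category $\C$ and every isomorphism $\varphi: x \to y$ in $\pi_0\C$, some member $\I$ of the set admits a $\V$-functor $\I \to \C$ sending $0 \mapsto x$, $1 \mapsto y$, whose restriction to the generator $\iv \to \I[0,1]$ represents $\varphi$. If Muro's formulation is instead phrased via maps out of arbitrary intervals, I would reduce it to this lifting statement, which is how it is used there.

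\textbf{Cofibrancy and interval property.} For cofibrancy, $\I(C_1,C_2)$ is built from $\{0,1\}$ as follows. One first forms $\H = \catsus\iv \amalg (\catsus\iv)^{\mathrm{op}}$. One then attaches two pushouts along $\T(\iv\amalg\iv) \to \T(C_i)$. Since $\iv$ is cofibrant and each $\iota_i$ is a cofibration, every step is a pushout of a cofibration, so the result is cofibrant. That $\I(C_1,C_2)$ is in fact an interval, i.e. that the generator $a$ becomes an isomorphism in $\pi_0$, is visible from the homotopies $C_1, C_2$: they identify $a\otimes b$ and $b \otimes a$ with the identities in $\Ho(\V)$.

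\textbf{Lifting step.} Let $\varphi: x \to y$ be an isomorphism in $\pi_0\C$ with inverse $\varphi^{-1}$.
\begin{enumerate}
\item By the first clause of Definition \ref{dfn:unit-cylinder-axiom}, lift $\varphi$ and $\varphi^{-1}$ to honest maps $f: \iv \to \C[x,y]$ and $g: \iv \to \C[y,x]$ in $\V$.
\item Because $\iv$ is cofibrant, the point-set composite $\circ(f\otimes g): \iv \cong \iv\otimes\iv \to \C[x,x]$ represents the composite $\varphi\varphi^{-1}$ in $\pi_0\C$. The composition in $\pi_0$ is computed through the derived tensor, which agrees with the underived tensor on cofibrant objects. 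Hence this composite and the identity $\id_x: \iv \to \C[x,x]$ agree in $\Ho(\V)$.
\item By the second clause, there is some $\iota_1: \iv\amalg\iv \to C_1$ in $\mathbb{C}$ with an extension $C_1 \to \C[x,x]$ of $\id_x \amalg \circ(f\otimes g)$. Similarly there is some $C_2 \in \mathbb{C}$ with an extension $C_2 \to \C[y,y]$ of $\id_y \amalg \circ(g \otimes f)$.
\end{enumerate}

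\textbf{Assembling the functor.} Now build the functor using the universal properties in order. The maps $f$ and $g$ define a functor $\H \to \C$ with $0\mapsto x$ and $1 \mapsto y$. The homotopy $C_2 \to \C[y,y]$ is compatible with $\id_1 \amalg b\otimes a$, so it extends this to $\K \to \C$ by the left pushout of \eqref{diagram:pushout-description-intervals}. The homotopy $C_1 \to \C[x,x]$ then extends it to $\I(C_1,C_2) \to \C$ by the right pushout. The generator $a$ goes to $f$, which represents $\varphi$, as required.

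\textbf{Main obstacle.} I expect no real difficulty in the construction itself. The delicate points are bookkeeping. First, one must check that the orientation of the homotopies matches the attaching maps in \eqref{diagram:pushout-description-intervals}: $\id_0 \amalg a\otimes b$ and $\id_1\amalg b\otimes a$, with the left-to-right composition convention. Second, one must confirm that Muro's notion of ``generating'' is exactly this lifting property, possibly up to a harmless variant such as allowing $\C$ only locally fibrant. The identification of point-set and homotopical composites in step 2 is the only place cofibrancy of $\iv$ is genuinely used.
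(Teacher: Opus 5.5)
Your proposal is correct and follows the intended argument. The paper offers no proof beyond calling the lemma immediate from the definitions, and your steps are exactly that verification: lift $\varphi$ and $\varphi^{-1}$ using the first clause of the unit cylinder axiom, obtain $C_1$ and $C_2$ from the second clause, and assemble a functor $\mathbb{I}(C_1,C_2)\to\mathcal{C}$ from the pushouts \eqref{diagram:pushout-description-intervals}. Because your construction works for an arbitrary (not necessarily locally fibrant) enriched category $\mathcal{C}$, it also covers the other possible reading of Muro's definition: taking $\mathcal{C}$ to be an arbitrary interval shows that every interval receives an identity-on-objects map from some $\mathbb{I}(C_1,C_2)$.
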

This stated, our last main theorem is as follows:
\begin{theorem}\label{thm:main-theorem-chapter-two}
	Let $\V$ and $\W$ be symmetric monoidal model categories. Suppose that:
	\begin{enumerate}
		\item $\V$ is combinatorial and satisfies the monoid axiom of \cite{SchwedeShipley1998}, so that in particular by \cite[Theorem 1.1]{Muro2012} the Dwyer-Kan model structure on $\vcat$ exists and is cofibrantly generated,
		\item the Dwyer-Kan model structure on $\wcat$ exists,
		\item The unit of $\V$ is cofibrant,
		\item $\V$ satisfies the Unit Cylinder Axiom (Definition \ref{dfn:unit-cylinder-axiom}).
	\end{enumerate}
	Further suppose that $L : \V \rightleftarrows \W : R$ is a Quillen adjunction, and that:
	\begin{enumerate}
	\setcounter{enumi}{4}
		\item $R$ is lax monoidal,
		\item the morphism $L(\iv) \to \iw$ induced by the corresponding colax structure on $L$ is a weak equivalence.
	\end{enumerate}
	Then the induced adjunction 
	$$\lcat : \vcat \rightleftarrows \wcat : \rcat$$
	is a Quillen adjunction.
\end{theorem}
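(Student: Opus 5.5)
We show that $\lcat$ preserves generating cofibrations and generating acyclic cofibrations, using the explicit generating sets for the Dwyer-Kan model structure on $\vcat$ from \cite{Muro2012}, which exists by hypothesis (1). Since $\vcat$ is cofibrantly generated, it suffices to check the generators.

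The generating cofibrations are of two kinds. The first is the map $\emptyset \to \{0\}$. The second is the maps $\catsus X \cup \{0,1\} \to \catsus Y$ induced by a generating cofibration $X \to Y$ of $\V$; here the source has the two objects and the morphism part of $\catsus X$. By Lemma \ref{lem:catsus-preserved-by-adjoint}, together with $\lcat$ preserving colimits and discrete categories, these go to $\emptyset \to \{0\}$ and to $\catsus(LX) \to \catsus(LY)$. These are cofibrations in $\wcat$ because $L$ is left Quillen. The local generating acyclic cofibrations $\catsus X \to \catsus Y$, for $X \to Y$ a generating acyclic cofibration, likewise go to $\catsus(LX) \to \catsus(LY)$. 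These are local acyclic cofibrations, hence acyclic cofibrations in $\wcat$, since they are cofibrations and fully faithful on the relevant objects.

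The remaining generating acyclic cofibrations are the inclusions $\{0\} \to \I$ for $\I$ ranging over a generating set of intervals. By hypothesis (4) and Lemma \ref{lem:generating-intervals-description}, we may take these to be $\{0\} \to \I(C_1,C_2)$ with $C_1,C_2 \in \mathbb{C}$. Since $\I(C_1,C_2)$ is built from $\{0,1\}$ by cell attachments (the pushouts \eqref{diagram:pushout-description-intervals}), the map $\lcat(\{0\}) = \{0\} \to \lcat\I(C_1,C_2)$ is a cofibration. It remains to show it is a Dwyer-Kan equivalence. Essential surjectivity is easy: the images of $a$ and $b$, precomposed with a lift through $\iw \simeq L(\iv)$ (using hypothesis (6)), become mutually inverse in $\pi_0$, via the images of $C_1$ and $C_2$. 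Full faithfulness reduces to showing that $\iw \to (\lcat\I(C_1,C_2))_0$ is a weak equivalence.

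For full faithfulness, we factor through the comparison
\[
\iw \leftarrow L(\iv) \to \lcat\bigl(\I(C_1,C_2)_0\bigr) \to \bigl(\lcat\I(C_1,C_2)\bigr)_0.
\]
Since $\{0\} \to \I(C_1,C_2)$ is an acyclic cofibration in $\vcat$, the composite $\iv \to \I(C_1,C_2)_0$ is a weak equivalence between cofibrant objects. Here $\lcat$ on monoids is the left Quillen functor of Theorem \ref{thm:enriched-adjunction-fixed-object-set} with $S$ a point, so $\lcat(\T\iv) \to \lcat(\I(C_1,C_2)_0)$ is a weak equivalence of cofibrant monoids. Combining this with hypothesis (6) identifies the source with $\iw$ up to weak equivalence. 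The main obstacle is showing that $\lcat(\I(C_1,C_2)_0) \to (\lcat\I(C_1,C_2))_0$ is a weak equivalence. For this we use Lemma \ref{lem:last-pushout-comparison-step}, which presents the map as induced by a map of homotopy pushout squares (Lemmas \ref{lem:pushout-description-interval} and \ref{lem:pushout-description-L-interval}) that is the identity on two corners. It then suffices that $\lcat(\K_0) \to (\lcat\K)_0$, i.e.\ $f: \lcat M(C_1,C_2) \to M(L(C_1),L(C_2))$, is a weak equivalence. By Lemma \ref{lem:monoid-pushout-transformation}, $f$ is a directed colimit of maps $f_n$, each induced by a map of homotopy pushout diagrams \eqref{eqn:diagram-transformation}. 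We argue by induction on $n$, using that weak equivalences between cofibrant cell towers are preserved by directed colimits. For this, we must show that the colax maps $L(X^{(n)}_P) \to \tilde X^{(n)}_P$ are weak equivalences. Here we exploit that the $\V$-factors are $\iv$, $\iv\amalg\iv$ or the cylinder $C_2$, all weakly equivalent to coproducts of copies of $\iv$. By two-out-of-three, the colax map $L(\iv^{\otimes k}) \to L(\iv)^{\otimes k}$ is a weak equivalence: it is compatible with $L(\iv)^{\otimes k} \to \iw^{\otimes k} \cong \iw$, and $L(\iv) \to \iw$ is a weak equivalence between (after cofibrant replacement of $\iw$ if needed) cofibrant objects, with the unit axiom handling tensoring. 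Naturality of the colax structure in the cylinder factors, together with the cofibrancy of all terms (pushout-product axiom), then passes this to the cube colimits $X^{(n)}_-$ by a Reedy/cube-induction argument. Checking the cofibrancy of $\iw$ here (or replacing it) is the delicate point I expect to need care.
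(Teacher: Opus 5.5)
There is a genuine error: you have misidentified the second family of generating acyclic cofibrations, and this makes part of your argument false rather than merely incomplete. The $\I(C_1,C_2)$ are intervals in Muro's sense, not Berger--Moerdijk intervals; the paper stresses this. They are freely generated by $a$, $b$ and two \emph{independent} homotopies, $\mathrm{id}_0\simeq a\otimes b$ and $\mathrm{id}_1\simeq b\otimes a$, with no coherence between them. Consequently $\I(C_1,C_2)_0$ need not be weakly equivalent to $\iv$. For example, take $\V=\sset$ and $C_1=C_2=\Delta^1$. Homotopically, a map out of $\I(C_1,C_2)$ amounts to an equivalence together with an arbitrary self-homotopy of an identity morphism. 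The endomorphism space at $0$ is then $\simeq\Omega S^2$ (the James construction on $S^1$), not a point. So $\{0\}\to\I(C_1,C_2)$ is not a Dwyer--Kan equivalence in general. The generators coming from Lemma \ref{lem:generating-intervals-description} and Muro's construction are the full-subcategory inclusions $\iota\colon \I(C_1,C_2)_0\hookrightarrow \I(C_1,C_2)^{\cof}$. In particular, your claims that $\iv\to\I(C_1,C_2)_0$ is a weak equivalence, and that full faithfulness amounts to $\iw\to(\lcat\I(C_1,C_2))_0$ being one, both fail already for $L=R=\mathrm{id}$ on $\sset$, where every hypothesis of the theorem holds. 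Separately, the endomorphism monoid of $\{0\}$ is the initial monoid $\iv$, not $\T\iv$.

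Once the generator is corrected, the repair is easy and lands on the paper's proof.
\begin{itemize}
\item $\lcat$ preserves cofibrations. The paper gets this at once from $\rcat$ preserving local acyclic fibrations that are surjective on objects; your generator-by-generator check is also fine, as is your treatment of $\catsus J$.
\item Theorem \ref{thm:enriched-adjunction-fixed-object-set} lets you replace $\lcat(\I(C_1,C_2)^{\cof})$ by $\lcat\I(C_1,C_2)$.
\item Essential surjectivity is your $0\cong 1$ argument.
\item The source of $\lcat(\iota)$ is the one-object category on $\lcat(\I(C_1,C_2)_0)$. So homotopical full faithfulness is exactly that $\lcat(\I(C_1,C_2)_0)\to(\lcat\I(C_1,C_2))_0$ is a weak equivalence.
\end{itemize}
That last map is what you called the main obstacle. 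Your treatment of it matches the paper's argument: Lemmas \ref{lem:last-pushout-comparison-step} and \ref{lem:monoid-pushout-transformation}, maps of homotopy pushouts, directed colimits along cofibrations, and the colax comparison maps being weak equivalences. Your worry about cofibrancy of $\iw$ is resolved by applying the unit axiom in $\W$ to $L(\iv)\to\iw$, which is itself a cofibrant replacement by hypothesis (6). The leg $\iw\leftarrow L(\iv)\to\lcat(\I(C_1,C_2)_0)$ should simply be deleted.
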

As this list of axioms is fairly long, we will say a brief word on the necessity of each: 
\begin{enumerate}
	\item We will prove this theorem by analyzing the effect of $\lcat$ on the generating (acyclic) cofibrations of the model structure on $\vcat$ as constructed in \cite{Muro2012}, hence the necessity for $\V$ to satisfy the hypotheses there. 
	\item $\W$ does not need to satisfy all of the same axioms, though it does still need to be a monoidal model category.
	\item The assumption that the unit of $\V$ is cofibrant could likely be omitted provided one is willing to make the other axioms more complex (phrased in terms of a cofibrant replacement of the unit), but as all of the examples we are interested in have cofibrant unit we have chosen not to pursue this slightly more complex statement. 
	\item The unit-cylinder axiom is the most technical, and its assumption is very specific to our method of proof analyzing the generating acylic cofibrations of the model structure as constructed by Muro in \cite{Muro2012}. 
	Thus, we expect it is the least necessary axiom. 
	However, it is satisfied in many examples of interest: for instance, it is always satisfied if every object of $\V$ is fibrant. 
	It is also satisfied by $\sset$ and various models of spectra. 
	\item The assumption that $R$ is lax monoidal is of course necessary in the classical theorem, and thus also necessary for the homotopical analog.
	\item Finally, we have in addition to lax monoidality of $R$ the weak preservation of the unit by $L$, which appears to have no analog in the classical theorem. 
However, as we note in Section \ref{section:necessity-lax-unitality}, some condition on the unit is necessary for the change-of-enrichment adjunction to be well behaved, even when $\V$ and $\W$ have the trivial model structure. 
While the condition of weak preservation of the unit is certainly stronger than necessary, it holds for many examples of interest, and furthermore our method of proof is fairly dependent on this fact.
\end{enumerate}
We have done most of the work to prove the theorem already, as the main difficulty is in proving that the generating acylic cofibrations are sent to weak equivalences. 
\begin{proof}[Proof of \ref{thm:main-theorem-chapter-two}]
	Since $\rcat: \wcat \to \vcat$ is given by applying $R: \W \to \V$ locally, and acyclic fibrations in both $\wcat$ and $\vcat$ are exactly the local acylic fibrations which are surjective on objects, it follows immediately that $\rcat$ preserves acyclic fibrations. 
	Dually, $\lcat$ preserves cofibrations. It remains to show that $\lcat$ preserves acylic cofibrations, for which it suffices to show that $\lcat$ preserves generating acyclic cofibrations.
	\par 
	By the construction of the Dwyer-Kan model structure on $\V$ in \cite{Muro2012}, there are two types of generating acylic cofibrations for $\vcat$: the first are those of the form $\sus J$ (Definition \ref{dfn:suspension-category}) for $J$ a generating acyclic cofibration of $\V$. 
	These are preserved as a consequence of Lemma \ref{lem:catsus-preserved-by-adjoint}. 
	By Lemma \ref{lem:generating-intervals-description}, the rest of the generating acyclic cofibrations are the inclusions $\iota: \I(C_1,C_2)_0 \hookrightarrow \I(C_1,C_2)^{\cof}$ for $C_1$ and $C_2$ cylinders in our set $\mathbb{C}$, where $\I(C_1,C_2)^\cof$ is a cofibrant replacement for $\I(C_1,C_2)$ in $\vcat_{\{0,1\}}$.
	Thus in order to show that $\lcat$ preserves acyclic cofibrations, it suffices to show that the map
	$$\lcat(\iota^\cof): \lcat \left(\I(C_1,C_2)_0\right) \to \lcat\left(\I(C_1,C_2)^\cof \right)$$
	is a Dwyer-Kan equivalence. 
	Note that since $\lcat$ is a left Quillen functor $\vcat_{\{0,1\}} \to \wcat_{\{0,1\}}$ (Theorem \ref{thm:enriched-adjunction-fixed-object-set}), the map 
	$$\lcat\left(\I(C_1,C_2)^\cof \right) \to \lcat\left(\I(C_1,C_2) \right)$$
	is a weak equivalence. 
	Thus, it is equivalent to prove that
	$$\lcat(\iota): \lcat \left(\I(C_1,C_2)_0\right) \to \lcat\I(C_1,C_2) $$ 
	is a weak equivalence. 
	\par 
	Homotopical essential surjectivity is straightforward: 
	$\lcat(\iota)$ is homotopically essentially surjective iff $0 \cong 1$ in $\pi_0 \I(L(C_1), L(C_2))$.
	Since the counit $L(\iv) \to \iw$ is a weak equivalence by assumption, the maps $L(\iv) \to \I(L(C_1), L(C_2))[0,1]$ and $L(\iv) \to \I(L(C_1), L(C_2))[1,0]$ coming from the pushout description (as in diagram \eqref{diagram:pushout-description-intervals}) give inverse isomorphisms in $\pi_0 \I(L(C_1), L(C_2))$.
	\par 
	Homotopical full faithfulness will follow from the analysis of endomorphism monoids done in this section:
	homotopical full faithulness is equivalent to asking that the map
	$$\lcat(\I(C_1,C_2 )_0) \to \I(L(C_1), L(C_2))_0$$
	is a weak equivalence. 
	By Lemmas \ref{lem:pushout-description-interval}, \ref{lem:pushout-description-L-interval}, \ref{lem:last-pushout-comparison-step}, this reduces to showing that 
	$$\lcat M(C_1,C_2) \to M(L(C_1),L(C_2))$$
	is a weak equivalence, with the map being the map of Lemma \ref{lem:monoid-pushout-transformation}.
	\par
	Now, the unit axiom in $\W$ and the fact that $L$ weakly preserves the unit implies that each of the horizontal arrows in Diagram \eqref{eqn:diagram-transformation} are weak equivalences.
	Since the diagrams are both a homotopy pushouts (Lemma \ref{lem:homotopy-pushout-monoid}), it then follows that each $f_n$ is a weak equivalence. 
	Finally, note that the maps $M^{(n)}(C_1,C_2) \to M^{(n+1)}(C_1,C_2)$ are pushouts of cofibrations and hence cofibrations, so that the colimit $M(C_1,C_2) \cong \colim_n M^{(n)}(C_1,C_2)$ is in fact a homotopy colimit. Similarly for $M(L(C_1),L(C_2))$. 
	Thus, it follows that the induced map between homotopy colimits $\lcat M(C_1,C_2) \to M(L(C_1),L(C_2))$ is a weak equivalence, as desired.
\end{proof}

\section{Necessity of the lax unitality axiom}\label{section:necessity-lax-unitality}
In this section, we clarify the need for our condition (6) of Theorem \ref{thm:main-theorem-chapter-two},
which seems to have no analagous condition in the non-homotopical case.
However, this is because the classical case is somewhat misleading: even when $\V$ and $\W$ are not equipped with homotopical structure, $\vcat$ and $\wcat$ are. 
As we will show in this section, the induced adjunction does not always preserve this structure, and this failure (at least in our example) is related to the behavior of $L$ on the unit.
\par 
Let $\B$ be the category consisting of two objects, $0$ and $1$, with a unique nonidentity morphism $0 \to 1$. 
Consider the adjunction
\begin{center}
\begin{tikzcd}[sep = large]
\B \ar[r, bend left = 25, "\underline{0}"] 
& \B \ar[l, bend left = 25, "\underline{1}"]
\end{tikzcd}
\end{center}
Where $\underline{0}$ is the constant functor at the object $0$, and $\underline{1}$ is the constant functor at the object $1$. 
Then the right adjoint $\underline{1}$ is lax monoidal, and there is therefore an induced adjunction
\begin{center}
\begin{tikzcd}
\bcat \ar[r, bend left = 25, "\underline{0}^{\cat}"] 
& \bcat \ar[l, bend left = 25, "\underline{1}^\cat"]
\end{tikzcd}
\end{center}
Inspecting this adjunction, we see that the left adjoint $\underline{0}^\cat$ does not in general preserve weak equivalences: 
Let $I$ be the category with two objects, $x$ and $y$, and hom-objects given by
$$I[x,x] = I[y,y] = I[x,y] = I[y,x] = 1$$
Then one can check that 
$$\underline{0}^\cat(I)[x,x] = \underline{0}^\cat(I)[y,y] = 1 \quad \quad \quad  \underline{0}^\cat(I)[x,y] = \underline{0}^\cat(I)[y,x] = 0$$
In particular, denoting by $\iota_x^*(I)$ the full subcategory of $I$ on the object $x$, we have that $\iota_A^*(I) \hookrightarrow I$ is an equivalence, but that this equivalence is not preserved by $\underline{0}^\cat$ (because the image is not essentially surjective).
This is not due to some lack of cofibrancy; $\underline{0}^\cat$ will not preserve \textit{any} equivalences which are not bijective on objects. 
Thus, in general, we cannot expect our left adjoint $L^\cat$ to be well-behaved without some additional structure. 
\par 
We think it is an interesting question exactly what additional structure is required when $\V$ and $\W$ are ordinary categories: that is, is there a necessary and sufficient condition on the adjunction $(L,R)$ which guarantees that the adjunction $(\lcat, \rcat)$ is homotopical? 
The condition of Theorem \ref{thm:main-theorem-chapter-two} is likely stronger than is necessary.
The example above violates a much weaker condition we could ask for: that $L(\iv)$ be nonempty, i.e. admit a map $\iw \to L(\iv)$. 
This condition alone may be too weak---however, we know of no counterexamples. 
That is, we know of no examples of adjunctions $L : \V \rightleftarrows \W : R$ in which $L(\iv)$ admits a map from $\iw$, but the induced adjunction $\lcat : \vcat \rightleftarrows : \rcat$ is not Quillen with respect the Dwyer-Kan model structure.


\printbibliography
\end{document}